\documentclass[dvipsnames]{article}

\usepackage[latin1, utf8]{inputenc}
\usepackage[T1]{fontenc}
\usepackage[english]{babel}
\usepackage{mathrsfs} 
\usepackage[numbers,sort]{natbib}
\usepackage{comment}
\usepackage{todonotes}
\usepackage{authblk}
\usepackage{hyperref}
\usepackage{amsmath}
\usepackage{amssymb}
\usepackage{amsthm}
\usepackage{enumitem}
\usepackage{stmaryrd}
\usepackage[toc,page]{appendix}
\usepackage[left=2.5cm,top=3cm,right=2.5cm,bottom=3cm,bindingoffset=0.5cm]{geometry}
\usepackage{xcolor}
\newcommand{\R}{\mathbb{R}}

\newcommand{\U}{\mathcal{U}}
\newcommand{\tar}{y_f}
\newcommand{\adjtar}{p_f}
\newcommand{\mL}{L|\Omega|}
\newcommand{\ie}{\textit{i.e.}, }

\newcommand{\fae}{\mathrm{for \ a.e.} \ }
\DeclareMathOperator*{\argmax}{arg\,max}
\DeclareMathOperator*{\argmin}{arg\,min}

\newcommand {\e}  {\varepsilon}
\newcommand {\dualfct}{J_{T,\e}}
\newcommand {\cost}{F_{T}}

\newcommand{\vertiii}[1]{{\vert\kern-0.25ex\vert\kern-0.25ex\vert #1 
    \vert\kern-0.25ex\vert\kern-0.25ex\vert}}
    
\newtheorem{prpstn}{Proposition}[section]
\newtheorem{lmm}[prpstn]{Lemma}
\newtheorem{thrm}[prpstn]{Theorem}
\newtheorem{informalth}{Theorem}

\newtheorem{dfntn}[prpstn]{Definition}
\newtheorem{crllr}[prpstn]{Corollary}
\newtheorem{rmrk}[prpstn]{Remark}

\everymath{\displaystyle}
\title{Approximate control of parabolic equations with on-off shape controls by Fenchel duality}
\author[a]{Camille Pouchol}
\author[b,d]{Emmanuel Trélat}
\author[c]{Christophe Zhang}
\affil[a]{Laboratoire MAP5 UMR 8145,
Université Paris Cité, 75006 Paris, France. Email address: camille.pouchol@u-paris.fr}
\affil[b]{Sorbonne Universit\'{e}, Universit\'{e} de Paris, CNRS, Laboratoire Jacques-Louis Lions, 75005 Paris, France. Email address: emmanuel.trelat@sorbonne-universite.fr}
\affil[c]{Université de Lorraine, CNRS, Inria, IECL, F-54000 Nancy, France. Email address: christophe.zhang@polytechnique.org}
\affil[d]{CAGE, INRIA, Paris, France.}
\date{\empty}

\begin{document}
\maketitle

\begin{abstract}
We consider the internal control of linear parabolic equations through \textit{on-off shape} controls, \ie controls of the form $M(t) \chi_{\omega(t)}$ with $M(t) \geq 0$ and $\omega(t)$ with a prescribed maximal measure. 

We establish small-time approximate controllability towards all possible final states allowed by the comparison principle with nonnegative controls. We manage to build controls with constant amplitude~$M(t) \equiv \overline{M}$. 
In contrast, if the moving control set $\omega(t)$ is confined to evolve in some region of the whole domain, we prove that approximate controllability fails to hold for small times.

The method of proof is constructive. Using Fenchel-Rockafellar duality and the bathtub principle, the on-off shape control is obtained as the bang-bang solution of an optimal control problem, which we design by relaxing the constraints.


Our optimal control approach is outlined in a rather general form for linear constrained control problems, paving the way for generalisations and applications to other PDEs and constraints.

\end{abstract}
\tableofcontents

\section{Introduction}
\subsection{Constrained internal control} 
This article is devoted to
the internal approximate controllability problem at time $T>0$ for linear parabolic equations on a domain $\Omega$ by means of \textit{on-off shape controls}, \ie internal controls taking the form 
\[\forall t \in (0,T),\, \forall x \in \Omega, \quad u(t, x) = M(t) \chi_{\omega(t)}(x),\]
where,  at a given time $t \in (0,T)$,  \begin{itemize}
    \item $M(t) > 0$ is the \textbf{nonnegative} amplitude of the control
    \item $\chi_{\omega(t)}$ is the characteristic function of the set $\omega(t)\subset \Omega$, \ie $\chi_{\omega(t)}(x):= \begin{cases} 1 & \text{ if } x \in \omega(t), \\
    0 & \text{ otherwise}\end{cases}.$
\end{itemize}
Both the amplitude and location may be subject to constraints. 
This problem is a paradigmatic simplification of many practical situations where one can act on a complex system with on-off devices that can be moved in time, while their shape can also be modified.

Along the introduction, we expose our results for general operators $A$, while first illustrating them in the case of the controlled linear heat equation with Dirichlet boundary conditions
\begin{equation}\label{nonnegative-controllability-heat}
\left\{\begin{aligned}
    y_t - \Delta y &= u \ \textrm{in} \ \Omega, \\
    y& =0 \ \textrm{on} \ \partial \Omega, \\
    y(0) &=  y_0  \ \textrm{in} \ \Omega.
    \end{aligned}\right.
\end{equation}
In this setting, $\Omega$ is an open connected bounded subset of $\R^d$, with $C^2$ boundary, and $y_0 \in L^2(\Omega)$.



\paragraph{Control without constraints.}

When constraints are removed, generic parabolic equations are well-known to be approximately controllable~\cite{aronszajn1956unique, russell_1978}, and even null-controllable~\cite{Emanuilov_1995, lebeau1995controle} in arbitrarily small time by means of internal controls, acting only on an arbitrary fixed measurable subset $\omega \subset \Omega$ of positive measure.

This more precisely means that for any time $T>0$, any measurable set $\omega \subset \Omega$ of positive measure, any $\e>0$, any $y_0 \in L^2(\Omega)$ and target $\tar \in L^2(\Omega)$, there holds
\[\exists u \in L^2((0,T) \times \Omega), \, \text{ such that }\; \forall t \in (0,T),\; \mathrm{supp}(u(t, \cdot)) \subset \omega \text{ and }  \|y(T) - \tar \|_{L^2(\Omega)} \leq \e,\]
where $\mathrm{supp}(u)$ refers to the essential support of a function $u \in L^2(\Omega)$.

\paragraph{Constrained control.}
In view of applications where unilateral or bilateral or $L^\infty$ constraints naturally appear, constrained controllability has been an active area of research \cite{brammer1972controllability,ahmed_finite-time_1985, son_unified_1990}, whether in finite or infinite dimension.

In various contexts, control constraints have been shown to lead to controllability obstructions, even for unilateral constraints. Some states are out of reach, regardless of how large $T>0$ may be~\cite{Pouchol2019, Ruiz2020}. 
On the other hand, some states are reachable but only for $T$ large enough: constraints may lead to the appearance of a minimal time of controllability~\cite{Loheac2017, Pighin2018, Loheac2018, Loheac2021}.

In the case of unilateral constraints for linear control problems in finite dimension, these obstructions can be categorised thanks to Brunovsky's normal form as done in \cite{Loheac2018}, leading to the existence of a positive minimal time. In infinite dimension, however, we are only aware of obstructions based on the comparison principle (see \cite{Loheac2017} and~\cite{Pouchol2019}). The present work uncovers another type of obstruction, already hinted at in~\cite{Pighin2018}.

\subsection{Main results}\label{subsec-results}

As our results require different sets of hypotheses and in order to give a quick glance at the main ideas, we first present them in the simplified context of the heat equation~\eqref{nonnegative-controllability-heat}. 

Given a constraint set $\U_+ \subset L^2(\Omega)$, we will be considering control constraints of the form \[\forall t \in (0,T), \quad u(t) \in \U_+. \] 
Here, the notation $\U_+$ emphasises that we will always deal with constraints that include the nonnegativity constraint, \ie sets $\U_+$ such that $\U_+ \subset \{u \in L^2(\Omega), \,u \geq 0\}$.

Now, when the control $u$ satisfies $u \geq 0$, it follows from \emph{the parabolic comparison principle} satisfied by the Dirichlet Laplacian~\cite{Evans1998} that
\begin{equation}
    \label{comp-heat}
\forall t \geq 0, \quad y(t) \geq e^{t\Delta} y_0,
\end{equation}
where $(e^{t\Delta})_{t \geq 0}$ denotes the heat semigroup with Dirichlet boundary conditions. 
Hence, targets~$\tar$ which do not satisfy $\tar \geq e^{T\Delta} y_0$ cannot be reached with nonnegative controls, let alone on-off shape controls.


Taking into account the obstruction to controllability given by the inequality~\eqref{comp-heat}, we adapt the usual definition of approximate controllability to the context of nonnegative controls.

More precisely, we say that system \eqref{nonnegative-controllability-heat} is \textit{nonnegatively approximately controllable  with controls in $\U_+$ in time $T>0$}, if for all $\e>0$, and all $y_0, \tar \in L^2(\Omega)$ such that $\tar \geq e^{T \Delta} y_0$, there exists a control $u \in L^2((0,T) \times \Omega)$ with values in $\U_+$ such that the corresponding solution to \eqref{nonnegative-controllability-heat} satisfies $\|y(T) - \tar \|_{L^2(\Omega)} \leq \e$.

\paragraph{On-off shape control.}

For our first main result, we focus on nonnegative approximate controllability with on-off shape controls: for a fixed $L\in(0,1)$, we consider the constraint set 
\[\U_L^{\mathrm{shape}}:=\{M \chi_\omega, \quad \omega\subset \Omega, \quad |\omega|\leq \mL, \, M > 0\} \subset L^2(\Omega).\]

Within the above class of on-off shape controls, we establish nonnegative approximate controllability in arbitrary time (see Theorem \ref{thm-controllability} for the precise and general statement), whatever the value of $L \in (0,1)$.
\begin{informalth}
\label{informal-shape}
For any $L\in (0,1)$, $T>0$, system~\eqref{nonnegative-controllability-heat} is nonnegatively approximately controllable with controls in $\U_{\mathrm{shape}}^L$ in time~$T$.
\end{informalth}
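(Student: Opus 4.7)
The strategy is to obtain the on-off shape control as the automatic bang-bang optimum of a convex relaxation, via Fenchel--Rockafellar duality and the bathtub principle, as suggested by the abstract. Fix a large amplitude $\overline{M}>0$. I first relax the non-convex constraint $u(t,\cdot) \in \U_{\mathrm{shape}}^{L}$ (with amplitude $\overline{M}$) to the convex admissible set
$$\U_L := \Bigl\{ v \in L^\infty(\Omega) : 0 \le v \le \overline{M},\ \int_\Omega v \le \overline{M}\,\mL \Bigr\},$$
whose extreme points are precisely the on-off shape controls of amplitude $\overline{M}$ with support measure $\mL$. I then consider the penalised tracking cost $\cost(u) := \tfrac{1}{2\e}\|y_u(T) - \tar\|_{L^2(\Omega)}^2$, minimised over $u \in L^2(0,T;\U_L)$, where $y_u$ solves \eqref{nonnegative-controllability-heat}. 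This is a convex problem on a closed bounded convex set, so a minimiser $u^*$ exists by the direct method. Applying Fenchel--Rockafellar duality with the control-to-final-state affine map $\Lambda_T u := e^{T\Delta}y_0 + \int_0^T e^{(T-s)\Delta}u(s)\,ds$, the dual reduces to the minimisation over a single final adjoint state $p \in L^2(\Omega)$ of a functional of the form
$$\dualfct(p) = \tfrac{\e}{2}\|p\|^2_{L^2(\Omega)} + \langle p,\, e^{T\Delta}y_0 - \tar \rangle + \int_0^T \sigma_{\U_L}\bigl(-e^{(T-t)\Delta}p\bigr)\,dt,$$
with $\sigma_{\U_L}$ the support function of $\U_L$; strong duality is standard given the quadratic cost and the boundedness of $\U_L$.

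The bang-bang shape of $u^*$ then comes from the primal--dual extremality relations. At each fixed $t \in (0,T)$, $u^*(t,\cdot)$ must maximise the linear functional $v \mapsto \langle v,\, e^{(T-t)\Delta}p^* \rangle_{L^2(\Omega)}$ over $\U_L$, where $p^*$ is the dual optimum. Since $\U_L$ is defined by a pointwise box constraint plus a single integral constraint, the \emph{bathtub principle} furnishes a Lagrange multiplier $\lambda^*(t)\ge 0$ such that
$$u^*(t,x) = \overline{M}\,\chi_{\{x\in\Omega\,:\, e^{(T-t)\Delta}p^*(x) \,>\, \lambda^*(t)\}}(x).$$
The real-analyticity of the heat semigroup ensures that, provided $p^* \not\equiv 0$, the level sets $\{e^{(T-t)\Delta}p^* = \lambda^*(t)\}$ have zero Lebesgue measure, so this formula is unambiguous and selects a set of measure exactly $\mL$. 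Hence $u^*$ is automatically an on-off shape control of constant amplitude $\overline{M}$.

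It remains to show that the minimal value of $\cost$ tends to $0$ as $\e \to 0$ (and, if necessary, $\overline{M}\to\infty$); this is equivalent to coercivity of the dual $\dualfct$ and to density of $\Lambda_T(\U_L)$ in the admissible cone $\{z \in L^2(\Omega) : z \ge e^{T\Delta}y_0\}$. \textbf{The main obstacle} I expect lies precisely here: one must prove approximate controllability towards any such target using nonnegative controls constrained to $\U_L$. This should combine the classical Holmgren unique continuation for the heat equation with an approximation argument turning arbitrary $L^2$ controls into nonnegative ones, typically by adding a large background constant and exploiting the strict inequality $L<1$ to absorb it within the pointwise bound. A secondary but genuine technical point is the measurability in $t$ of the multiplier $\lambda^*(t)$ and of the resulting on-off set $\omega^*(t) = \{e^{(T-t)\Delta}p^* > \lambda^*(t)\}$; this should follow from a standard measurable selection argument applied to the bathtub construction, combined with the continuity of $t\mapsto e^{(T-t)\Delta}p^*$ on $[0,T)$.
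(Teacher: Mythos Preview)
Your overall architecture (convex relaxation of $\U_{\mathrm{shape}}^L$, Fenchel--Rockafellar duality, bathtub principle for the extremality relation, analyticity of the adjoint to kill level sets) matches the paper's. But you have swapped the roles of primal and dual relative to what the paper does, and this creates a genuine gap at exactly the step you flag as ``the main obstacle''.

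\medskip

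\textbf{The gap.} In your setup you fix $\overline M$, impose the hard constraint $u(t,\cdot)\in\U_L$, and penalise the terminal error quadratically. You then need to show that the reachable set with controls in $\bigcup_{\overline M>0}\U_L$ (equivalently, with nonnegative controls) is dense in $\{z\ge e^{T\Delta}y_0\}$. Your proposed fix---start from an unconstrained $L^2$ control given by Holmgren and ``add a large background constant''---does not work. An $L^2$ control need not be bounded below, and even if it were, adding $C\mathbf 1_\Omega$ shifts the final state by $C\int_0^T e^{(T-s)\Delta}\mathbf 1_\Omega\,ds$, a fixed positive function times $C$, so you would have to aim at a modified target that no longer satisfies $\tar\ge e^{T\Delta}y_0$. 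The parameter $L<1$ plays no role here.

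\medskip

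\textbf{What the paper does instead.} The paper puts a hard constraint on the final state (the indicator of $\overline B(\tar-e^{T\Delta}y_0,\e)$) and a carefully designed cost on the control, so that the dual functional reads
\[
\dualfct(\adjtar)=\tfrac12\Bigl(\int_0^T\sigma_{\overline\U_L}(e^{(T-t)\Delta}\adjtar)\,dt\Bigr)^{2}-\langle \tar-e^{T\Delta}y_0,\adjtar\rangle+\e\|\adjtar\|_{L^2}.
\]
The key lemma (the one you are missing) is purely about signs: $\sigma_{\overline\U_L}(v)=0\Rightarrow v\le 0$, hence if the first term vanishes along a sequence $\adjtar^n/\|\adjtar^n\|$ then any weak limit $q$ satisfies $q\le 0$. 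Since $\tar-e^{T\Delta}y_0\ge 0$ by hypothesis, the linear term is then nonnegative, and the $\e\|\adjtar\|$ term gives coercivity. This is precisely where the comparison-principle restriction on targets meets the nonnegativity constraint on controls; no Holmgren-type unique continuation is invoked. In your dual the analogous statement is: $e^{(T-t)\Delta}\adjtar\le 0$ for a.e.\ $t$ forces $\adjtar\le 0$ by letting $t\to T$, and this is what you should use in place of the ``add a constant'' argument.

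\medskip

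Two smaller points. First, in the paper the amplitude $\overline M$ is not fixed a priori: it emerges as $\overline M=\int_0^T\sigma_{\overline\U_L}(p^\star(t))\,dt$ from the optimality condition, so no limit $\overline M\to\infty$ is needed. Second, your worry about measurable selection of $\lambda^*(t)$ and $\omega^*(t)$ is unnecessary: once $p^\star$ is known, the threshold is the explicit function $h(p^\star(t,\cdot))=\max\bigl(0,\Phi^{-1}_{p^\star(t,\cdot)}(\mL)\bigr)$ and the set is $\{p^\star(t,\cdot)>h(p^\star(t,\cdot))\}$, with measurability in $t$ immediate from continuity of $t\mapsto p^\star(t,\cdot)$.
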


To establish this result, we draw from the Lions strategy in \cite{Lions-1992}, which develops a constructive approach in studying the approximate controllability of a linear wave equation.
The idea is to consider the requirement $\|y(T)-\tar\|_{L^2(\Omega)} \leq \e$ as a constraint. With $L_T u :=\textstyle \int_0^T e^{(T-t)\Delta} u(t) \,dt$ and since $y(T)= L_T u + e^{T \Delta} y_0$, Lions considers the constrained optimal control problem
\[\pi:=\inf \left\{\frac{1}{2} \|u\|_{L^2((0,T) \times \Omega)}^2, \ \|e^{T \Delta} y_0 + L_T u -\tar\|_{L^2(\Omega)}\leq \e \right\}.\]
The infimum satisfies $\pi<+\infty$ if and only if there exists $u \in L^2((0,T)\times \Omega)$ steering $y_0$ to a closed $\e$-ball around~$\tar$.
To find minimisers, \ie to build controls, note that 
\[\pi=\inf_{u\in L^2((0,T) \times \Omega)}\textstyle \frac{1}{2} \|u\|_{L^2((0,T) \times \Omega)}^2 + G_{T,\e}(L_T u)= \displaystyle \inf_{u\in L^2((0,T) \times \Omega)} F_T(u) + G_{T,\e}(L_T u),\]
with $F_T(u) = \textstyle \frac{1}{2} \|u\|_{L^2((0,T) \times \Omega)}^2$ and
\[G_{T,\e}(y)=\left\{\begin{aligned}  0 \quad &  \text{if} \quad \|e^{T\Delta} y_0+y-\tar\|_{L^2(\Omega)}\leq \e, \\
 +\infty \quad & \textrm{otherwise}.\end{aligned}\right.\]
From this optimisation problem, one computes its Fenchel dual optimisation problem, which reads 
\[d:=-\inf_{\adjtar \in L^2(\Omega)} F_T^\ast(L_T^\ast \adjtar) + G_{T,\e}^\ast(-\adjtar) = - \inf_{\adjtar \in L^2(\Omega)} \textstyle \frac{1}{2} \|L_T^\ast \adjtar\|_{L^2((0,T) \times \Omega)}^2 + G_{T,\e}^\ast(-\adjtar),\]
where $\cost^\ast$($= \cost$) and $G_{T,\e}^\ast$ are the Fenchel conjugates of $\cost$ and $G_{T,\e}$, respectively, and $L_T^\ast$ is the adjoint of the linear bounded operator $L_T : L^2((0,T)\times\Omega) \rightarrow L^2(\Omega)$. Recall that for a given $\adjtar \in L^2(\Omega)$, $p = L_T^\ast \adjtar$ is the solution to the adjoint equation ending at $\adjtar$, \ie it solves
\begin{equation}
\label{backward-heat}
\left\{\begin{aligned}
    p_t+\Delta p &= 0, \\
    p=0 \ &\textrm{on} \ \partial \Omega, \\
    p(T) = \adjtar  \ &\textrm{on} \  \Omega.
    \end{aligned}\right.
\end{equation}

Under suitable conditions, the Fenchel-Rockafellar theorem \cite{Rockafellar1967} ensures that $\pi =d$.
As a result, one can then study the dual functional to establish that $\pi=d<+\infty$, and that its minimum is attained. Typically, one proves that it is coercive, as a consequence of a unique continuation property. Furthermore, the  cost function $\cost$ is differentiable in this case and the first order optimality condition for the (unique) variable $\adjtar^\star$ minimising the dual functional then reads~$L_T L_T^\ast \adjtar^\star=\tar - e^{T \Delta} y_0-\e \textstyle\frac{\adjtar^\star}{\|\adjtar^\star\|_{L^2(\Omega)}}$. The optimal control $u^\star:=L_T^\ast \adjtar^\star$ is thus constructed from the minimiser of the dual problem~$\adjtar^\star$.

Accordingly, in this paper we reframe constrained approximate controllability as an optimal control problem, replacing $\textstyle \frac{1}{2} \|u\|_{L^2((0,T) \times \Omega)}^2$ of~\cite{Lions-1992} with a suitable cost functional $\cost$. This constitutes a novel generalisation of the Lions method.

One can choose between two different sufficient conditions for the equality $\pi=d$ to hold. One regards the primal problem, and the other the dual problem. Importantly, they are not symmetric (although the primal and dual problems are). These hypotheses when used on the primal problem are useless when it comes to proving controllability: they amount to assuming that controllability holds. We here crucially use these hypotheses in terms of the dual problem, see subsection \ref{subsec-abstract-control} and Appendix~\ref{app-subsec-fenchel} for more details.

As the detailed statements in Theorem \ref{thm-controllability} and Proposition~\ref{prop-second-cost} show:
\begin{itemize} 
\item Instead of using the $L^2$ norm as in the optimal control problem studied in \cite{Lions-1992}, we will consider the cost functional:
\begin{equation}\label{intro-cost-function}
\cost(u) :=\frac12 \sup_{t \in [0,T]}  \max\left(\|u(t,\cdot)\|_{L^\infty(\Omega)}, \frac{\|u(t,\cdot)\|_{L^1(\Omega)}}{\mL}\right)^2 + \delta_{\{u \geq 0\}}(u),\end{equation}
where $\delta_{\{u \geq 0\}}(u) = 0$ if $u \geq 0$ and $+\infty$ otherwise, and the supremum over $t \in [0,T]$ is the essential supremum.

The rather unusual form of the minimisation criterion \eqref{intro-cost-function} is finely designed so as to handle nonnegativity and the other (bound, volume) constraints we are dealing with. 
\item The optimal controls have constant amplitude in time, \ie $M(t) \equiv \overline{M}$.
\item The proof is constructive: 
the optimal control $u^\star$ can be computed from a unique dual optimal variable $\adjtar^\star$ solving the corresponding Fenchel dual problem. This computation generalises what is done in \cite{Lions-1992} to the broader case of costs that are not differentiable but still convex. More precisely, $u^\star$ is given 
by 
\begin{equation*} 
u^\star(t, \cdot) = \overline{M} \, \chi_{\{p^\star(t, \cdot) > h(p^\star(t, \cdot))\}},\quad \overline{M} = \int_0^T \int_{\{p^\star(t, \cdot) > h(p^\star(t,\cdot))\}} p^\star(t,x)\,dx \,dt, \end{equation*}
where $h : L^2(\Omega) \rightarrow \R$ is a function that will be defined in Section \ref{subsec-bathtub-convex-analysis}, and $p^\star$ solves the adjoint equation~\eqref{backward-heat} with $p^\star(T)=\adjtar^\star$.

\end{itemize}

\paragraph{Obstructions to nonnegative controllability.}
In the spirit of the unconstrained case, one may wonder whether nonnegative approximate controllability can be achieved with controls acting only in some prescribed time-independent subdomain $\omega$. We emphasise that our first result does not \textit{a priori} prevent the control from visiting the whole domain $\Omega$.

Our second result proves that visiting the whole $\Omega$ is necessary in the following sense: if the sets $\omega(t), \, t\in (0,T)$ do not intersect some fixed open subset of $\Omega$, nonnegative approximate controllability is lost for small times.
\begin{informalth}
\label{informal-obstruction}
Assume that the constraint set $\U_+$ satisfies the following property: there exists a ball $B(x,r)\subset \Omega$ with $x \in \Omega$ and $r>0$ such that
\[\forall u \in \U_+, \quad \mathrm{supp}(u) \cap B(x, r) = \emptyset.\]
Then, there exists $T^\star>0$ such that the control system~\eqref{nonnegative-controllability-heat} is not nonnegatively approximately controllable with controls in $\U_+$ in time~$T \leq T^\star$.
\end{informalth}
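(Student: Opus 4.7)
The plan is to argue by convex duality. Denote by $p$ the backward adjoint state, solution of~\eqref{backward-heat} with some terminal condition $p_0 \in L^2(\Omega)$. The standard duality identity associated with~\eqref{nonnegative-controllability-heat} reads
\[
\int_\Omega (y(T) - e^{T\Delta}y_0)\,p_0\,dx \;=\; \int_0^T\!\!\int_\Omega u\,p\,dx\,dt.
\]
I would seek $p_0$ such that $p \leq 0$ on $[0,T] \times (\Omega \setminus B(x,r))$. Since every $u \in \U_+$ is nonnegative with $\mathrm{supp}\,u \cap B(x,r) = \emptyset$, the right-hand side is then nonpositive, and consequently $\int_\Omega y(T)\,p_0\,dx \leq \int_\Omega e^{T\Delta}y_0\,p_0\,dx$ uniformly in $u \in \U_+$. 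Choosing data $y_0,\tar$ with $\tar \geq e^{T\Delta}y_0$ and $\int_\Omega (\tar - e^{T\Delta}y_0)\,p_0\,dx > 0$, the Cauchy--Schwarz inequality then yields
\[
\|y(T) - \tar\|_{L^2(\Omega)} \;\geq\; \frac{\int_\Omega (\tar - e^{T\Delta}y_0)\,p_0\,dx}{\|p_0\|_{L^2(\Omega)}} \;>\; 0
\]
uniformly in admissible $u$, contradicting approximate controllability as soon as $\e$ is chosen smaller than this bound.

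To realise this scheme, take $y_0 \equiv 0$, $\tar = \chi_{B(x,r/2)}$, and
\[
p_0 \;=\; \chi_{B(x,r/2)} - K\,\varphi_1,
\]
where $\varphi_1 > 0$ is the first Dirichlet eigenfunction of $-\Delta$ on $\Omega$ with eigenvalue $\lambda_1 > 0$, and $K > 0$ is to be fixed. Since $e^{s\Delta}p_0 = e^{s\Delta}\chi_{B(x,r/2)} - K e^{-\lambda_1 s}\varphi_1$, the sign requirement on $p$ reduces to $K \geq M(T)$, where
\[
M(T) \;:=\; \sup_{\substack{s \in [0,T]\\ z \in \Omega \setminus B(x,r)}} \frac{e^{\lambda_1 s}\,(e^{s\Delta}\chi_{B(x,r/2)})(z)}{\varphi_1(z)}.
\]

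The main obstacle is the key technical estimate $M(T) \to 0$ as $T \to 0^+$. On any compact subset of $\Omega \setminus \overline{B(x,r)}$ bounded away from $\partial\Omega$, this is immediate from the Gaussian heat kernel bound $(e^{s\Delta}\chi_{B(x,r/2)})(z) \leq C s^{-d/2} e^{-r^2/(16s)}$, valid since $|z-y| \geq r/2$ whenever $z \notin B(x,r)$ and $y \in B(x,r/2)$, combined with the positive lower bound on $\varphi_1$ in the interior. The delicate case is $z \to \partial\Omega$: both numerator and $\varphi_1(z)$ vanish linearly in $\mathrm{dist}(z,\partial\Omega)$, so one must invoke the boundary Harnack principle (or, equivalently, the Hopf lemma combined with uniform boundary gradient bounds for the Dirichlet heat semigroup) to reduce the sup near $\partial\Omega$ to one over a fixed interior compact set, after which continuity in $s$ delivers the claimed limit. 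Given this, pick $T^\star > 0$ so small that $M(T^\star)\,\max_{B(x,r/2)} \varphi_1 < 1$, and for any $T \leq T^\star$ take $K = M(T)$: then $p_0 > 0$ on $B(x,r/2)$, so $\int_\Omega \tar\,p_0\,dx = |B(x,r/2)| - K\int_{B(x,r/2)}\varphi_1\,dx > 0$ while $\|p_0\|_{L^2(\Omega)}$ stays bounded by a fixed constant, and the separation inequality of the first paragraph furnishes a strictly positive $\e$ precluding approximate controllability in time $T$.
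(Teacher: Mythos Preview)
Your proof is correct and follows the same overarching duality strategy as the paper: test against an adjoint state whose sign on $\Omega\setminus B(x,r)$ forces the duality integral to have the ``wrong'' sign, thereby separating $y(T)$ from a well-chosen target supported inside $B(x,r)$. The paper's proof (its Theorem~4.1 via Lemma~4.2) differs from yours only in how the terminal adjoint datum is built and in the technical tool used near the boundary. The paper takes $p_f=\xi\varphi_1$ with $\xi$ smooth, equal to $+1$ on $\Omega\setminus B(x,r)$ and $-1$ on a compact $K\subset B(x,r)$; since $p_f\in W^{2,r}$ for all $r$ and coincides with $\varphi_1$ near $\partial\Omega$, parabolic regularity gives continuity of both the solution and its normal derivative up to the boundary, so the required sign persists for short time by a pure continuity argument. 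Your construction $p_0=\chi_{B(x,r/2)}-K\varphi_1$ instead relies on the quantitative Gaussian/Davies heat-kernel bound and the Hopf comparison $\varphi_1(z)\gtrsim\operatorname{dist}(z,\partial\Omega)$ to control the ratio $M(T)$ uniformly near $\partial\Omega$. Both work for the Dirichlet heat equation; the paper's route is softer (only $C^0$ regularity of $q$ and $\partial_\nu q$ in time, no kernel estimates) and generalises verbatim to the uniformly elliptic operators~\eqref{def_elliptic}, whereas yours is more explicit and even yields a quantitative lower bound on the controllability gap, at the cost of invoking the boundary-weighted heat-kernel bound.
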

We refer to Theorem \ref{thm-obstruction} for the complete statement. Let us mention that obstructions of this type have been reported for similar problems in~\cite{Pighin2018}.


\paragraph{Amplitude and time optimal control.}
In Section~\ref{sec-comments}, we gather several further results regarding the dependence of the amplitude $\overline{M} = \overline{M}(T, y_0, \tar, \e)$ on its arguments. Using duality once more, we study its dependence on the final time $T$. 

Focusing on the case $y_0 = 0$, we then
establish an equivalence between the optimal control problem and the related minimal time problem
\begin{equation*}
  \inf\{T>0, \quad \exists u\in L^2((0,T) \times \Omega),
  \quad \|L_T u-\tar\|_{L^2(\Omega)} \leq \e, \quad \cost(u)\leq \lambda\}, \quad \lambda>0.
\end{equation*}


\subsection{General results}
\label{subsec-general-results}
Theorems \ref{informal-shape} and~\ref{informal-obstruction} above have been stated for the heat equation with Dirichlet boundary conditions, in order to provide the reader with a quick overview of our main results. In fact, they all hold for more general semigroups under suitable hypotheses presented hereafter.  

The underlying general setting is that of linear control problems of the form 
\begin{equation}\label{nonnegative-controllability}
\left\{\begin{aligned}
    y_t - A y &= u, \\
    y(0)& =  y_0  \ \textrm{in} \ \Omega
    \end{aligned}\right.
\end{equation}
where $\Omega$ is an open subset of $\R^d$, and $A: D(A) \rightarrow L^2(\Omega)$ is an operator generating a $C_0$ semigroup $(S_t)_{t \geq 0}$ on~$L^2(\Omega)$~\cite{Engel_Nagel_2001, pazy2012semigroups}.

In this more general context, we define nonnegative approximate controllability as follows. 
\begin{dfntn}
Given a constraint set of \textbf{nonnegative} controls $\mathcal  U_+ \subset L^2(\Omega)$, we say that system~\eqref{nonnegative-controllability} is nonnegatively approximately controllable with controls in $\U_+$ in time $T$ if for all $\e>0$, and all $y_0, \tar \in L^2(\Omega)$ such that $\tar \geq S_T y_0$, there exists a control $u \in L^2((0,T) \times \Omega)$ with values in $\U_+$ such that the corresponding solution to \eqref{nonnegative-controllability} satisfies $\|y(T) - \tar \|_{L^2(\Omega)} \leq \e$.
\end{dfntn}

\paragraph{General hypotheses for Theorem \ref{informal-shape}.}
We have previously presented Theorem \ref{informal-shape} for the heat equation as a paradigmatic example.
Nevertheless, the underlying hypotheses on which some of our proofs rely are much more general in nature; we review them below.

\begin{itemize} 
    \item First, we consider the (unusual) unique-continuation like property
    \begin{equation}
    \label{main_property}\tag{\textbf{GUC}}
  \forall y \in L^2(\Omega), \quad \exists t \in (0,T), \; S_t y\, \text{ is constant over }\, \Omega \quad  \implies \quad y = 0.
\end{equation}
This property is satisfied as soon as the three assumptions below hold:
\begin{itemize}
\item for $y \in L^2(\Omega)$, $S_t y  \in D(A)$ for all $t>0$ (for instance, this is true if $(S_t)_{t \geq 0}$ is analytic~\cite{pazy2012semigroups}),
\item the only constant function in $D(A)$ is the zero function,\footnote{This is the case for the Dirichlet Laplacian with domain $D(A) = H^2(\Omega) \cap H^1_0(\Omega)$ if $\Omega$ has a $C^2$ boundary.}
\item $S_t$ is injective for all $t > 0$.\footnote{This is the case for groups, such as the wave equation, and for parabolic equations thanks to the parabolic maximum principle. This is also true for analytic semigroups: if $S_t y = 0$ for some $t>0$, then $S_s y = 0$ for all $s \geq t$ and  by analyticity $S_s y = 0$ for all $s \geq 0$, which for $s=0$ yields $y=0$.}
\end{itemize}
\item Second, we will be interested in \textit{analytic-hypoellipticity}: $\partial_t -A$ is said to be analytic-hypoelliptic
     if
    any distributional solution $y$ to $\partial_t y - A y = f$ on $\Omega \times (0,T)$ with $f$ analytic in $\Omega$ is analytic in $\Omega$, where analyticity refers to real-analyticity.
\item Third, we will say that $(S_t)_{t \geq 0}$ satisfies the \textit{comparison principle} if
\begin{equation}
\label{comp_principle}
\forall y \in L^2(\Omega), \quad y \geq 0 \implies \forall t >0, \; S_t y \geq 0.
\end{equation}
\end{itemize}

The first two properties are sufficient for the generalisation of Theorem~\ref{informal-shape}, see Theorem~\ref{thm-controllability}. The third will play an important role when it comes to minimal controllability times, and is in line with our definition of nonnegative approximate controllability.

\paragraph{Elliptic operators.}
As a generalisation of the Dirichlet Laplacian, let us discuss a large class of uniformly elliptic operators that do satisfy these properties and to which our obstruction result Theorem~\ref{informal-obstruction} generalises (see Theorem~\ref{thm-obstruction}). 

Let us assume that $\Omega$ is a bounded, open, connected subset of $\R^d$, with $C^2$ boundary. Defining $D(A):= H^1_0(\Omega) \cap H^2(\Omega)$, we introduce operators of the form
\begin{equation}
\label{def_elliptic}
\forall y \in D(A), \quad A y:= \sum_{1\leq i,j \leq d} \partial_{x_j} (a_{ij}(x) \partial_{x_i}y) - \sum_{i=1}^d b_i(x) \partial_{x_i} y + c(x) y.
\end{equation}
When referring to operators of the form~\eqref{def_elliptic}, we will always assume that the functions $a_{ij} = a_{ji}$, $b_i$ are in $W^{1,\infty}(\Omega)$, $c$ is in $L^\infty(\Omega)$, 
and that the operator is uniformly elliptic, \ie there exists $\theta >0$ such that
\[\forall x \in \Omega,\; \forall \xi \in \R^d,  \quad \sum_{1 \leq i,j \leq d} a_{ij}(x)\xi_i \xi_j \geq \theta |\xi|^2. \]
The adjoint of $A$ is given by
\[\forall p \in D(A^\ast), \quad A^\ast p= \sum_{1\leq i,j \leq d} \partial_{x_i} (a_{ij}(x) \partial_{x_j}p) + \sum_{i=1}^d b_i(x) \partial_{x_i} p + \left(c(x)- \sum_{i=1}^d \partial_{x_i}(b_i(x))\right) p,\]
and we have $D(A^\ast) = D(A)$.

Both $A$ and $A^\ast$ satisfy the parabolic comparison principle~\cite{Evans1998}, hence they satisfy the comparison principle~\eqref{comp_principle}.  
They also satisfy the three conditions sufficient for the~\eqref{main_property} property to hold.~\footnote{The analyticity of the semigroup is well known for this class of elliptic operators on open domains with $C^2$ boundary. There are clearly no nonzero constant functions in $H^2\cap H^1_0$. Finally, injectivity follows from the comparison principle (see above footnote).}
Furthermore, both $\partial_t-A$ and $\partial_t-A^\ast$ are analytic-hypoelliptic as soon as all functions $a_{ij}$, $b_i$ and~$c$ are analytic~\cite{Nelson1959}. 


\subsection{Proof strategy and related works}

 In the unconstrained case, approximate controllability of the heat equation is a consequence of the unique continuation property, thanks to a general property of linear control problems (see for example \cite[Section 2.3]{CoronBook}).
In the case of heat equations, the latter property can be obtained by the Holmgren Uniqueness Theorem~\cite{aronszajn1956unique}.
In contrast to these existence results, the variational approach developed in~\cite{Lions-1992} (see Section \ref{subsec-results}), handles approximate controllability in a \emph{constructive manner}.

Our strategy consists in extending this approach to the constrained case: the main idea is to find a suitable cost function $\cost$ such that optimal controls must satisfy the constraint $u \in \U_L^{\mathrm{shape}}$. A remarkable feature of our strategy lies in how we design the cost function: we do so by building an adequate Fenchel dual function, instead of trying to find the cost function directly.

\paragraph{Constrained controllability.}
Constrained control problems in infinite dimension have been studied in papers such as~\cite{Kunisch-Wang-2013, Berrahmoune2014, Berrahmoune2019, berrahmoune_variational_2020, Ervedoza_2020}.
In~\cite{Ervedoza_2020}, sufficient conditions (in the form of unique continuation properties) for controllability results are derived when the control and states are constrained to some prescribed subspaces, but at the expense of controlling only a finite-dimensional subpart of the final state. 
In \cite{Kunisch-Wang-2013}, the authors deal with a form of approximate controllability of the heat equation akin to ours, focusing on minimal time problems. They derive bang-bang type necessary optimality conditions for minimal time controls, and then build such controls using an auxiliary optimisation problem.  

The papers \cite{Berrahmoune2014, Berrahmoune2019, berrahmoune_variational_2020} address constrained exact controllability through modified observability inequalities, thus giving abstract necessary and/or sufficient conditions. One key difference with our work is that constraint sets are assumed to be convex. In fact, all examples handled by~\cite{Berrahmoune2014, Berrahmoune2019, berrahmoune_variational_2020} feature isotropic constraints, that is, constraints that are symmetrical with respect to $0$, or more generally, are expressed using radial functions (such as norms). This precludes, for instance, any type of positivity constraint.
%


It is noteworthy that all the above references introduce so-called dual functionals, drawing from the variational formulation of the Hilbert Uniqueness Method. However, the formalism of Fenchel-Rockafellar duality in itself, as developed in \cite{Lions-1992}, has increasingly been abandoned in the literature. Some notable exceptions are~\cite{trelat-wang-xu} in the context of stabilisation and~\cite{Lazar2023} for parameterised problems, both in the unconstrained case. To some extent, the work~\cite{Berrahmoune2019} uses Fenchel duality to study (constrained) null-controllability in some specific settings.

We fully exploit the ideas hinted at in the latter paper by choosing a different type of functional, which allows us to handle anisotropic, non-convex constraints.
In contrast with the aforementioned trend in the literature, we work with Fenchel duality, but in a rather unusual way, in that we will focus mainly on the dual problem. The nature of the actual primal problem (optimal control problem) being solved follows effortlessly.
To perform the necessary computations, we will make extensive use of convex analysis. Doing so bypasses many technical difficulties thanks to properties of subdifferentials and Fenchel conjugates, among others, and allows for the use of costs which are not differentiable but still smooth in the convex analytic sense.

\paragraph{Bathtub principle for appropriate costs.}
The second main idea is what underlies our choice of cost function~$\cost$, forcing optimal controls to satisfy the required the on-off shape constraint.
As the set of on-off shape controls is a non-convex cone, we are led to relaxation, \ie to consider the closure of its convex hull. In order to build relevant costs, we then rely on the so-called bathtub principle (actually, a relaxed version of it)~\cite{Lieb2001}.

For a given function $v \in L^2(\Omega)$, the latter principle solves
\begin{equation*}
\sup_{u \in \overline{\U}_L} \int_\Omega u(x) v(x) \,dx, \qquad \overline{\U}_L:= \left\{ u \in L^2(\Omega), \; 0 \leq u \leq 1  \text{ and } \int_\Omega  u \leq \mL\right\}.\end{equation*}
This optimisation problem comes up naturally in some control problems similar to ours~\cite{lance_shape_2020, mazari_quantitative_2021}, or in shape optimisation problems~\cite{privat_optimal_2015}.

Interpreting the bathtub principle as a Fenchel conjugate leads us to design the unusual cost functional~\eqref{intro-cost-function}.
This allows us to design dual problems such that optimal controls exist, and are characterised as maximisers of some bathtub principle. Then, using analyticity properties for solutions of the dual problem, we prove their uniqueness and hence their extremality, thereby uncovering that they are on-off shape controls.

\paragraph{Bang-bang property of optimal controls.}
Bang-bang controls (\ie controls that saturate their constraints) are a common feature in time optimal control problems. A growing literature on the heat equation alone \cite{schmidt1980bang, mizel1997bang,micu_time_2012,wang_attainable_2015, yang_bangbang_2019} shows that this property extends well to some infinite-dimensional systems. In our case, we will see that the on-off  shape controls we have constructed can be understood as time-optimal controls. As these controls are bang-bang, this yields another occurrence of the bang-bang property in the time-optimal control of the heat equation. 

Note, however, that in the references cited above, the controls are constrained to lie in balls of specific function spaces, whereas we consider non-negative constraints on the controls, which is an anisotropic constraint. Moreover, the bang-bang property is usually established separately using optimality conditions, having established controllability at the onset. In our case, the Fenchel-Rockafellar duality approach allows to do all those things simultaneously.

\subsection{Extensions and perspectives}

\paragraph{Operator, boundary conditions.}
The~\eqref{main_property} property and analytic-hypoellipticity are two key sufficient properties for nonnegative approximate controllability by on-off shape controls. We have highlighted second-order elliptic operators with analytic coefficients Dirichlet boundary conditions as an example. Our results apply to such operators with Robin boundary conditions of the form $a(x) y + b(x) \partial_\nu y = 0$ over $\partial \Omega$ (with $a, b$ analytic) as soon as the function $a$ does not vanish on the whole of $\partial \Omega$ (more generally, as soon as $a$ is nontrivial on any connected component of $\partial \Omega$).
This excludes the important case of Neumann boundary conditions, which remains open. 

Our approach also accommodates subelliptic operators. This includes a large class of Hörmander operators, \ie operators of the form $A = \textstyle \sum_{i=1}^m X_i^2 + X_0 + V \mathrm{Id}$ with vector fields $X_1, \ldots, X_m$ generating a Lie algebra that equals $\R^d$ on the whole of $\Omega$. Under general regularity assumptions and boundary conditions, such an operator and its adjoint generate a strongly continuous semigroup on $L^2(\Omega)$, satisfy the comparison principle~\cite{Bony69}, all three conditions sufficient for the~\eqref{main_property} property, and are analytic-hypoelliptic for instance if the characteristic manifold is an analytic symplectic manifold (see~\cite{Metivier1981}).

Finally, going beyond the linear setting is a completely open problem, since our approach fundamentally relies on the Fenchel-Rockafellar theorem which itself requires a bounded linear operator (the role played by $L_T$ in our setting).

\paragraph{Control operator.}
Our results have been stated with the identity control operator. 
They extend to the nonnegative control of 
\begin{equation*}
\left\{\begin{aligned}
    y_t - A y &= \varphi u, \\
    y(0) = &y_0  \ \textrm{in} \ \Omega
    \end{aligned}\right.
\end{equation*}
where $\varphi \in L^\infty(\Omega)$ is positive, analytic.

An interesting perspective is to follow our proof strategy with boundary control operators, where on-off shape controls now refer to characteristic functions over the boundary $\partial \Omega$.    

\paragraph{Other notions of controllability.}
In the case of unconstrained controllability with a control acting in some fixed subset $\omega$, any function that can be reached exactly is (at least) analytic in $\Omega \setminus \omega$, preventing exact controllability to hold true. 

On the one hand, this argument for (non)-exact controllability by on-off shape controls fails since the control may act everywhere. On the other hand, our approach heavily relies on targeting a ball $\overline{B}(\tar, \e)$ with~$\e>0$. As a result, exact nonnegative controllability by on-off shape controls is an open and seemingly difficult question.

A related matter is that of the cost of approximate controllability as a function of $\e \rightarrow 0$.

Although our focus has been on $L^2$-approximate controllability, we mention that one may extend the same methodology to $L^p$-approximate controllability for $1<p<+\infty$, by working in duality within the appropriate spaces:
the bounded operator underlying the Fenchel-Rockafellar duality is now $L_T \in L(L^2(0,T;L^p(\Omega)),L^p(\Omega))$, meaning that the dual functional is defined on $L^{q}(\Omega)$ with $q$ the dual exponent to $p$.

\paragraph{Controllability in large time.} As evidenced by Theorem B, we provide obstructions for small times~$T$. We do not know whether nonnegative approximate controllability holds for sufficiently large times.

\paragraph{Abstract constrained control.}
The strategy of proof developed in this article hints at generalisations, where the method is applied to abstract linear control problems with abstract constraint sets $\U$. 

In particular, we expect it to lead to necessary and  sufficient conditions for controllability when $\U$ is convex. When~$\U$ is not convex as is the case for on-off shape controls, this requires to study the convex hull of $\U$, following the relaxation approach. This abstract setting should allow us to discern how one can design a cost function $\cost$, analogous to \eqref{intro-cost-function}, tailored to a given $\U$.

Further sufficient conditions should be derived to ensure that optimal controls in the convex hull of~$\U$ actually are in the original constraint set~$\U$. In the present work, analytic-hypoellipticity and the~\eqref{main_property} property play that role in the case of on-off shape controls.

This will be the subject of an ulterior article. 
 
\paragraph{Regularity of the sets $\omega(t)$.}
Another problem is to analyse the complexity of the sets $\omega(t)$ occupied by optimal controls over time. For instance, how smooth ($BV$~regularity, number of connected components, etc) are the sets $\omega(t)$ achieving approximate controllability?

In view of applications, these are important issues for the controls to be implementable in practice. For example, if the sets $\omega(t)$ are constrained to depend on a few parameters restricting their geometry, or if they are restricted to rigid movements, controllability is a totally open question.

\paragraph{Homegenisation approach.}
We acknowledge that an homogenisation approach to establishing  nonnegative approximate controllability by on-off shape controls could certainly be pursued. The underlying idea would be to "atomise" the sets $\omega(t)$ (see \cite{allaire1997shape}). 
Contrarily to our technique, however, this approach would not be constructive.

\paragraph{Numerical approximation of optimal controls.}
Optimal controls are given explicitly in terms of optimisers of the dual problem: the constructive nature of our approach means that optimal controls may be numerically computed, at least on paper.

Providing reliable and efficient methods to compute optimal controls is a difficult issue which has been studied in the case of Lions' cost functional with $\e = 0$ (\ie exact controllability)~\cite{Labbe-Trelat-2006, Boyer2013}. Similar results in a generalised setting with our Fenchel-Rockafellar-based approach would be valuable.

Contrary to Lions' cost functional, we note that ad hoc algorithms are required in order to cope with functions that are not necessarily differentiable, as is the case in the present paper. Recent primal-dual algorithms designed for optimisation problems with objective functions of the form $F(u) + G(L_T u)$ are likely to be good candidates~\cite{Chambolle_2011}.

\paragraph{Outline of the paper.}
First, Section~\ref{sec-choosing-cost} lays out the convex analytic framework, that of Fenchel-Rockafellar duality, and how it may be applied to constrained approximate controllability. We then introduce the bathtub principle and interpret it in terms of Fenchel conjugation in order to design a relevant optimal control problem for our purposes.
Section~\ref{sec-controllability} is dedicated to the proof of our nonnegative approximate controllability result given by Theorem~\ref{thm-controllability}, and Section~\ref{sec-obstructions} to that of the  obstruction result, Theorem~\ref{thm-obstruction}.
Finally, Section~\ref{sec-comments} gathers our results about further obstructions when the control amplitude is bounded, along with our analysis of the corresponding minimal time control problem.

\section{Building the optimal control problem}
\label{sec-choosing-cost}
\subsection{Convex analytic framework}
\label{subsec-convex-analysis}
Let $H$ be a Hilbert space. We let $\Gamma_0(H)$ be the set of functions from $H$ to $\mathopen{]}-\infty, +\infty]$ that are convex, lower semicontinuous (abbreviated lsc) and proper (\ie not identically $+\infty$).  
For $f \in \Gamma_0(H)$, we let \[\mathrm{dom}(f) = \{x \in H,\, f(x) < +\infty\}\] be its \textit{domain}.

\paragraph{Fenchel conjugate.} For a proper function $f : H \rightarrow  \mathopen{]}-\infty, +\infty]$, we denote $f^\ast : H \rightarrow \mathopen{]}-\infty, +\infty]$ its convex conjugate, given by the convex lsc function
\[f^\ast(y): = \sup_{x \in H} \; \big(\langle y, x \rangle - f(x)\big), \quad \forall y \in H.\]

\paragraph{Support and indicator functions.} 
Given a subset $C \subset H$, the \textit{indicator function} of $C$ is the function defined by
\begin{equation*}
\delta_C(x) := 
    \begin{cases}
    0 & \text{if } x \in C \\
    +\infty & \text{if } x \notin C 
    \end{cases}\;, \quad \forall x \in H,
\end{equation*}
and the \textit{support function} of $C$ is defined by
\[\sigma_C(p):=\sup_{x\in C}\langle p, x \rangle =\delta_C^\ast(p), \quad \forall p \in H,\]
\textit{i.e.}, the Fenchel conjugate function of the indicator function of $C$.

\paragraph{Subdifferentials.} For $f \in \Gamma_0(H)$, we let 
\[\partial f(x) := \{p \in H, \, \forall y \in H, f(y) \geq f(x) + \langle p, y-x\rangle\},\]
be its subdifferential at a point $x \in H$.

Various common properties of Fenchel conjugates, support functions and subdifferentials are used throughout the article. These are all recalled in Appendix~\ref{app-convex-analysis}, where a few additional lemmas are proved.

\subsection{Approximate controllability by Fenchel duality (\cite{Lions-1992})}
\label{subsec-abstract-control}

Let us explain how the approximate controllability problem is reformulated in the context of Fenchel-Rockafellar duality \cite{Rockafellar1967} (see \ref{app-subsec-fenchel} for a general presentation), following the strategy introduced by Lions in~\cite{Lions-1992}. We work with the control problem~\eqref{nonnegative-controllability}, with the control space $E := L^2((0,T) \times \Omega)$ and the state space $L^2(\Omega)$. 

By Duhamel's formula $y(T) = S_T y_0 + L_T u$, the inclusion $y(T) \in \overline{B}(\tar, \e)$ (where the closed ball of center $\tar$ and radius $\e$ is with respect to the $L^2(\Omega)$-norm) can equivalently be written as $L_T u \in \overline{B}(\tar - S_T y_0, \e)$.

Given some cost functional $\cost : E \rightarrow [0, +\infty] \in \Gamma_0(E)$, consider the optimal control problem (which we will refer to as the \textit{primal} problem)
\begin{equation*}
\pi:= \inf_{u \in E} F_T(u) + G_{T,\e}(L_T u).
\end{equation*}
where \[G_{T,\e} := \delta_{\overline{B}(\tar - S_T y_0, \e)} \in \Gamma_0(L^2(\Omega)). \]


Now consider the Fenchel \textit{dual} to the above problem, which writes 
\begin{equation}\label{dual-control-problem}
    d = -\inf_{\adjtar \in L^2(\Omega)} J_{T,\e}(\adjtar), \quad J_{T,\e}(\adjtar):= F_{T}^\ast(L_T^\ast \adjtar) + G_{T,\e}^\ast(-\adjtar).
\end{equation}
Thanks to the formulae for conjugates, we find 
\[G_{T,\e}^\ast(z)= \langle \tar - S_T y_0, z\rangle_{L^2} + \e \|z\|_{L^2}, \]leading to
\[J_{T,\e}(\adjtar) =  F_{T}^\ast(L_T^\ast \adjtar) - \langle \tar - S_T y_0, \adjtar\rangle_{L^2}+ \e \|\adjtar\|_{L^2}.\]

We recall that $p = L_T^\ast \adjtar$ solves the adjoint equation 
\begin{equation}
\label{backward}
\left\{\begin{aligned}
    p_t+A^\ast p &= 0, \\
    p(T) = \adjtar  \ &\textrm{in} \  \Omega.
    \end{aligned}\right.
\end{equation}

\paragraph{Strong duality.}  The weak duality $\pi \geq d$ always holds. According to the Fenchel-Rockafellar duality theorem recalled in Appendix~\ref{app-subsec-fenchel}, the existence of $\adjtar \in \mathrm{dom}(G_{T,\e}^\ast)$ such that $F_{T}^\ast$ is continuous at $L_T^\ast \adjtar$ is a sufficient condition for the strong duality $\pi = d$ to hold. Since $\mathrm{dom}(G_{T,\e}^\ast) = L^2(\Omega)$, this condition reduces to the existence of a point of continuity of the form $L_T^\ast \adjtar$ for $F_{T}^\ast$.
In the cases covered here, we shall check that the chosen $F_{T}^\ast$ is continuous at~$0$. When strong duality holds, it is therefore equivalent to work with the dual problem, which is easier to handle especially when it has full domain, \ie its objective function is finite everywhere.

We note that an alternative to establish strong duality is to find $u \in E$ such that $G$ is continuous at $L_T u$ and $F(u)<+\infty$. This approach is bound to fail here since it would require finding a control achieving the target ball, \textit{i.e.}, assuming that controllability holds.

\paragraph{Non-trivial strong duality.} Furthermore, the primal value $\pi$ is attained if finite, \ie if this equality is not the trivial $+\infty = +\infty$ (the uncontrollable case). Thus, if $d$ is finite, $\pi$ is finite as well and attained: we may speak of optimal controls. 

This requirement that $d$ be finite is by far the subtlest one. It may be tackled by proving that the functional $\dualfct$ underlying the dual problem (written in infimum form $\textstyle \inf_{\adjtar \in L^2(\Omega)} \dualfct(\adjtar)$) has a minimum. In practice, we will always find this to be the case, as the dual problem is usually unconstrained (depending on the choice of $\cost$), unlike the primal problem.
Hence, both $\pi$ and $d$ will be attained and, from~Proposition~\ref{saddle-point-stuff}, any optimal dual variable $\adjtar^\star$ is such that any optimal control $u^\star$ satisfies
\begin{equation}\label{optimal-control-characterization}
    u^\star\in \partial \cost^\ast(L_T^\ast \adjtar^\star).
\end{equation}

\begin{prpstn}
\label{prop-control-in-U}
Assume that, for any $y_0, \tar\in L^2(\Omega)$ such that $\tar\geq S_T y_0$ and any $\e>0$, 
\begin{itemize}
    \item there exists $\adjtar \in L^2(\Omega)$ such that $\cost^\ast$ is continuous at $L_T^\ast \adjtar$, 
   \item $d \neq +\infty$.
   \end{itemize}
   If for any dual optimal variable~$\adjtar^\star$, the  controls characterised by \eqref{optimal-control-characterization} are in $\U_+$, then the control system~\eqref{nonnegative-controllability} is nonnegatively approximately controllable with controls in $\U_+$ in time $T$.
\end{prpstn}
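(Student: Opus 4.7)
The plan is to fix arbitrary $y_0, \tar \in L^2(\Omega)$ with $\tar \geq S_T y_0$ and $\e > 0$, and to exhibit a control in $\U_+$ driving the state to within $\e$ of $\tar$ by producing a saddle pair $(u^\star, \adjtar^\star)$ for the Fenchel-Rockafellar problem $(\pi, d)$, then reading off both feasibility and membership in $\U_+$ from this pair.

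First I would establish strong duality. The first bullet supplies a $\adjtar$ at which $\cost^\ast$ is continuous, while the explicit formula $G_{T,\e}^\ast(z) = \langle \tar - S_T y_0, z\rangle_{L^2} + \e\|z\|_{L^2}$ shows that $\mathrm{dom}(G_{T,\e}^\ast) = L^2(\Omega)$. Hence the hypothesis of the Fenchel-Rockafellar theorem stated in Appendix~\ref{app-subsec-fenchel} (in its dual-side continuity form) is met, yielding $\pi = d$ together with the attainment of the primal infimum at some $u^\star \in L^2((0,T)\times\Omega)$. The second bullet $d \neq +\infty$, combined with $\pi = d$ and the nonnegativity $\pi \geq 0$ (from $\cost \geq 0$ and $G_{T,\e} \geq 0$), then forces $\pi$ to be finite.

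Finiteness of $\pi = \cost(u^\star) + G_{T,\e}(L_T u^\star)$, together with the fact that $G_{T,\e}$ takes values in $\{0,+\infty\}$, gives $G_{T,\e}(L_T u^\star) = 0$; that is, $L_T u^\star \in \overline{B}(\tar - S_T y_0, \e)$, which by Duhamel's formula is exactly $\|y(T) - \tar\|_{L^2(\Omega)} \leq \e$ for the trajectory $y$ of \eqref{nonnegative-controllability} starting from $y_0$ with control $u^\star$. To obtain the membership $u^\star \in \U_+$, I would invoke Proposition~\ref{saddle-point-stuff}: once both primal and dual values are attained under strong duality, any primal optimum $u^\star$ and any dual optimum $\adjtar^\star$ satisfy the saddle-point relation $u^\star \in \partial \cost^\ast(L_T^\ast \adjtar^\star)$, which is precisely \eqref{optimal-control-characterization}. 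The third hypothesis then translates this into $u^\star \in \U_+$, completing the proof.

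The main obstacle is the dual attainment step, i.e., producing an $\adjtar^\star$ realizing the infimum of $\dualfct$. Fenchel-Rockafellar under the dual-side continuity assumption only guarantees primal attainment, so the finiteness of $d$ from bullet two does not by itself deliver a dual minimizer. In the concrete applications carried out in Section~\ref{sec-controllability}, the existence of $\adjtar^\star$ is obtained through a coercivity argument for $\dualfct$ that ultimately rests on the unique continuation property \eqref{main_property}. At the abstract level of this proposition, the clause "for any dual optimal variable~$\adjtar^\star$" should be understood jointly with $d \neq +\infty$ as implicitly encoding the existence of at least one such minimizer in the range of applications targeted; verifying this existence is the substantive content of the concrete results that follow.
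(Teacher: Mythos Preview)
Your argument is correct and mirrors the paper's own reasoning, which is laid out informally in the paragraphs on ``Strong duality'' and ``Non-trivial strong duality'' immediately preceding the proposition rather than in a separate proof environment. Your observation that dual attainment is not formally guaranteed by the two bullets alone is accurate and worth flagging: the paper indeed writes ``both $\pi$ and $d$ will be attained'' as part of the surrounding discussion, tacitly importing the existence of $\adjtar^\star$ (later secured via coercivity in Proposition~\ref{prop-coercive}) into the hypotheses of the proposition.
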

This shows how the choice of the cost $\cost$ impacts the existence and properties of optimal controls. More precisely, it must be pointed out that all the hypotheses of Proposition \ref{prop-control-in-U} are formulated with respect to the dual problem. Accordingly, from the next section onwards, our strategy will be to determine an adequate optimal control problem by designing its dual problem.

Finally, we emphasise that \eqref{optimal-control-characterization} is only a necessary condition for the optimality of $u^\star$. It becomes sufficient only when $\partial \cost^\ast(L_T^\ast \adjtar^\star)$ is reduced to a singleton, which will occur in our case.

\subsection{Convex analytic interpretation of the bathtub principle}
\label{subsec-bathtub-convex-analysis}
Starting from the set of on-off shape controls of amplitude $1$, 
\begin{equation}\label{1-shapess-L}\U_L:=\{\chi_\omega, \quad \omega\subset \Omega, \quad |\omega|\leq \mL\},\end{equation}
where $|\cdot|$ denotes the Lebesgue measure, we define the closure of its convex hull (which is also its weak-$\ast$ closure for the $L^\infty(\Omega)$-topology)
\begin{equation}\label{convex-constraint-set}
\overline{\U}_L:= \left\{ u \in L^2(\Omega), \; 0 \leq u \leq 1  \text{ and } \int_\Omega  u \leq \mL\right\}.\end{equation}

Given a fixed $v \in L^2(\Omega)$, we consider the (static) maximisation problem
\begin{equation}
\label{bathtub-relaxed}\sup_{u \in \overline{\U}_L} \int_\Omega u(x) v(x) \,dx.\end{equation}
This a relaxed version of the so-called bathtub principle, which gives the maximum value as well as a characterisation of maximisers~\cite{Lieb2001}. For the sake of readability, we introduce the necessary results for what follows, but refer to Appendix \ref{app-sec-bathtub} for a more detailed statement. 
For a given $v \in L^2(\Omega)$, we let
\begin{equation}\label{Phi}
    \Phi_v(r):= \left|\{v>r\}\right|.
\end{equation}
and its pseudo-inverse function
\begin{equation}
    \label{Phi-1}
    \Phi^{-1}_v(s):=\inf_{r\in \R}\left\{\Phi_v(r)\leq s\right\}=\inf_{r\in \R}\left\{|\{v>r\}|\leq s\right\}.
\end{equation}

Finally, we set \begin{equation}
\label{level-set-function}
h(v) :=  \max(0, \Phi^{-1}_v(\mL)). 
\end{equation}
\begin{rmrk}
The function $\Phi_v^{-1}$ is the Schwarz rearrangement of $v$, see~\cite{kawohl2006rearrangements}.
\end{rmrk}

\begin{lmm}[relaxed bathtub principle]
\label{relaxed_bathtub}
Let $v \in L^2(\Omega)$. The maximum in~\eqref{bathtub-relaxed} equals
\begin{equation*}
\int_0^{\min(\Phi_v(0), \mL)} \Phi_v^{-1}.
\end{equation*}
Furthermore, if all the level sets of the function $v$ have measure zero, the maximum equals $\textstyle \int_{\{v>h(v)\}} v$ and is uniquely attained by
\begin{equation*} u^\star := \chi_{\{v>h(v)\}}, \end{equation*}
\end{lmm}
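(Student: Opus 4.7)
The strategy is a layer-cake / Cavalieri argument: bound $\int uv$ from above slice by slice, then exhibit $u^\star = \chi_{\{v > h(v)\}}$ as an explicit saturator and read off uniqueness from the equality cases. As a preliminary, any $u \in \overline{\U}_L$ is nonnegative, so $\int u v \leq \int u v^+$ with equality iff $u = 0$ a.e.\ on $\{v < 0\}$; the candidate $u^\star$ has this property since $h(v) \geq 0$, so it suffices to prove the statement with $v$ replaced by $v^+$. This substitution leaves $\Phi_v$ and $\Phi_v^{-1}$ unchanged on $[0, \min(\Phi_v(0), \mL)]$, the only region appearing in the formula.

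For the upper bound, Fubini yields
\[ \int_\Omega u\, v^+\,dx \;=\; \int_0^\infty\!\! \int_{\{v > t\}} u(x)\,dx\,dt. \]
For each $t \geq 0$ the inner integral is bounded by $|\{v > t\}| = \Phi_v(t)$ (since $u \leq 1$) and by $\int_\Omega u \leq \mL$, hence by $\min(\Phi_v(t), \mL)$. A Cavalieri-type computation on the 2D region $\{(t, s) : t \geq 0,\ 0 \leq s \leq \min(\Phi_v(t), \mL)\}$ rewrites
\[ \int_0^\infty \min(\Phi_v(t), \mL)\,dt \;=\; \int_0^{\min(\Phi_v(0), \mL)} \Phi_v^{-1}(s)\,ds, \]
yielding the claimed supremum bound for every $u \in \overline{\U}_L$.

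For the attainment, assume the level sets of $v$ have measure zero, so that $\Phi_v$ is continuous on $\R$. By the definition of $h(v)$, $\Phi_v(h(v)) = \min(\Phi_v(0), \mL) \leq \mL$, so $u^\star = \chi_{\{v > h(v)\}}$ lies in $\overline{\U}_L$. Writing $\int u^\star v = h(v)\,\Phi_v(h(v)) + \int_{\{v > h(v)\}} (v - h(v))\,dx$ and applying the layer-cake formula to the second term (whose integrand is nonnegative), one recovers exactly the upper bound, identifying $\int u^\star v$ both with $\int_{\{v > h(v)\}} v$ and with $\int_0^{\min(\Phi_v(0), \mL)} \Phi_v^{-1}$.

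Uniqueness follows by tracking the equality cases. A maximiser $u$ must saturate $\int_{\{v > t\}} u\,dx = \min(\Phi_v(t), \mL)$ for a.e.\ $t \geq 0$. For $t > h(v)$ this minimum equals $\Phi_v(t) = |\{v > t\}|$, which combined with $u \leq 1$ forces $u = 1$ a.e.\ on $\{v > t\}$; letting $t \downarrow h(v)$ gives $u = 1$ a.e.\ on $\{v > h(v)\}$. For $0 \leq t < h(v)$ the minimum equals $\mL$, which combined with $\int_\Omega u \leq \mL$ forces $u = 0$ a.e.\ on $\{v \leq t\}$; letting $t \uparrow h(v)$ gives $u = 0$ a.e.\ on $\{v < h(v)\}$. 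Since $|\{v = h(v)\}| = 0$ by hypothesis, $u = \chi_{\{v > h(v)\}}$ a.e. The main technical care is the Cavalieri rewriting in the second step, which is classical when $\Phi_v$ is continuous but requires a brief argument via the pseudo-inverse $\Phi_v^{-1}$ when $v$ has positive-measure level sets (relevant only for the first assertion, since the uniqueness part already assumes continuity).
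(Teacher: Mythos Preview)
Your layer-cake argument is a genuinely different route from the paper's, and it is essentially correct for the second assertion. The paper (in its Appendix~B) proceeds by a case split on whether $|\{v>0\}|\ge L|\Omega|$: in the saturated case it shows any maximiser must have $\int u = L|\Omega|$, thereby reducing to the \emph{classical} bathtub principle (quoted from Lieb--Loss as a black box); in the unsaturated case it argues directly that $u^\star=1$ on $\{v>0\}$. Your approach instead bounds $\int uv$ slice by slice via $\int_{\{v>t\}}u\le\min(\Phi_v(t),L|\Omega|)$, rewrites the integrated bound by Cavalieri, and then reads off both attainment and uniqueness from the equality cases. This is more self-contained (no appeal to the classical result) and makes the uniqueness mechanism transparent; the paper's route is shorter if one is willing to import the classical statement.

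There is one small gap. For the \emph{first} assertion (the value of the maximum for general $v$, without the null-level-set hypothesis) you establish only the upper bound $\sup_{u\in\overline{\U}_L}\int uv\le\int_0^{\min(\Phi_v(0),L|\Omega|)}\Phi_v^{-1}$. You exhibit a saturator $\chi_{\{v>h(v)\}}$ only under the hypothesis that level sets are null; when $|\{v=h(v)\}|>0$ this candidate may have mass strictly below $\min(\Phi_v(0),L|\Omega|)$ and then fails to saturate the slice bound at $t=h(v)$. To close the equality in general one must add a portion $c\,\chi_{\{v=h(v)\}}$ with $0\le c\le 1$ chosen so that the total mass reaches $\min(\Phi_v(0),L|\Omega|)$, exactly as in the paper's full statement. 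This is a two-line fix, but as written your argument proves only the inequality in the general case.
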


We refer to Lemma \ref{lmm-app-bathtub-relaxed} for the comprehensive statement of the relaxed bathtub principle.
We may interpret the above results as a formula for the support function of $\overline{\U}_L$ in $L^2(\Omega)$:
\begin{equation}
\label{conjugate_formula} 
\sigma_{\overline{\U}_L}(v)=\sup_{u \in \overline{\U}_L} \left( \langle u,v \rangle_{L^2} - \delta_{\overline{\U}_L}(u)\right))=\sup_{u \in \overline{\U}_L} \int_\Omega u(x) v(x) \,dx  = \int_0^{\min(\Phi_v(0), \mL)} \Phi_v^{-1}.  \end{equation}

First, using the characterisation of the subdifferential given in Appendix~\ref{app-convex-analysis}, we arrive at the following characterisation for the solutions to the maximisation problem given in Lemma \ref{relaxed_bathtub}:
\begin{prpstn}\label{prop-bathtub-subdiff}
Let $v\in L^2(\Omega)$. The maximisers of the relaxed bathtub problem are given by the elements of $\partial \sigma_{\overline{\U}_L}(v)$.
\end{prpstn}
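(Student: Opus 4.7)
The plan is to prove Proposition \ref{prop-bathtub-subdiff} by a direct application of the Fenchel--Young characterisation of subdifferentials, exploiting the fact that $\sigma_{\overline{\U}_L}$ is by construction the Fenchel conjugate of $\delta_{\overline{\U}_L}$.

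First, I would observe that $\overline{\U}_L$ is a convex subset of $L^2(\Omega)$ (as is clear from its definition \eqref{convex-constraint-set}) and that it is closed: the constraints $0\leq u \leq 1$ and $\int_\Omega u \leq \mL$ pass to the limit under $L^2$ convergence (up to subsequences, using a.e.\ convergence for the pointwise bounds). Since $\overline{\U}_L$ is also nonempty (it contains $0$), the indicator $\delta_{\overline{\U}_L}$ belongs to $\Gamma_0(L^2(\Omega))$, so by the biconjugate (Fenchel--Moreau) theorem $\delta_{\overline{\U}_L}^{\ast\ast} = \delta_{\overline{\U}_L}$, i.e.\ $\sigma_{\overline{\U}_L}^\ast = \delta_{\overline{\U}_L}$.

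Next, I would invoke the general equivalence recalled in Appendix~\ref{app-convex-analysis}: for any $f \in \Gamma_0(H)$ and any $x,p \in H$,
\[ p \in \partial f(x) \iff x \in \partial f^\ast(p) \iff f(x) + f^\ast(p) = \langle p, x \rangle. \]
Applying this with $f = \delta_{\overline{\U}_L}$ and $f^\ast = \sigma_{\overline{\U}_L}$, one has $u \in \partial \sigma_{\overline{\U}_L}(v)$ if and only if
\[ \delta_{\overline{\U}_L}(u) + \sigma_{\overline{\U}_L}(v) = \langle u, v \rangle_{L^2}. \]
The left-hand side is finite only if $u \in \overline{\U}_L$, in which case the identity rewrites as $\langle u, v \rangle_{L^2} = \sigma_{\overline{\U}_L}(v) = \sup_{u' \in \overline{\U}_L} \langle u', v \rangle_{L^2}$, which is exactly the statement that $u$ attains the supremum in \eqref{bathtub-relaxed}. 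Conversely, any maximiser of \eqref{bathtub-relaxed} automatically lies in $\overline{\U}_L$ and satisfies this equality, hence belongs to $\partial \sigma_{\overline{\U}_L}(v)$.

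This argument is essentially a one-liner once the Fenchel--Young equality is on the table; no obstacle is really to be expected, apart from verifying that the closure and convexity properties of $\overline{\U}_L$ are in place so that $\sigma_{\overline{\U}_L}^\ast = \delta_{\overline{\U}_L}$. No information about the level sets of $v$ or about the explicit formula \eqref{conjugate_formula} is needed here: Proposition \ref{prop-bathtub-subdiff} is a purely convex-analytic reformulation of the variational problem, and the explicit description of maximisers via $h(v)$ from Lemma \ref{relaxed_bathtub} is what will later allow one to exploit this subdifferential representation in concrete computations.
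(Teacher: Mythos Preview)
Your proof is correct and follows essentially the same route as the paper: both arguments reduce to the Fenchel--Young identity $\delta_{\overline{\U}_L}(u)+\sigma_{\overline{\U}_L}(v)=\langle u,v\rangle$ (equivalently, the characterisation \eqref{subdiff-and-convex-conjugate}), which immediately identifies maximisers of the relaxed bathtub problem with the elements of $\partial\sigma_{\overline{\U}_L}(v)$. Your additional verification that $\overline{\U}_L$ is nonempty, closed and convex (so that $\sigma_{\overline{\U}_L}^\ast=\delta_{\overline{\U}_L}$) is a welcome point of rigour that the paper leaves implicit.
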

\begin{proof}
Using $(\delta_{\overline{\U}_L})^\ast= \sigma_{\overline{\U}_L}$ along with~\eqref{subdiff-and-convex-conjugate} given in Appendix \ref{app-subsec-conj}, we have, for $v\in L^2(\Omega)$,
\begin{equation*}
        \argmax_{u\in \overline{\U}_L} \langle u, v \rangle_{L^2} = \argmax_{u\in L^2} \langle u, v \rangle_{L^2} - \delta_{\overline{\U}_L}(u)
        =\left\{u \in L^2, \quad \langle u, v \rangle_{L^2} - \delta_{\overline{\U}_L}(u)=\sigma_{\overline{\U}_L}(v)\right\}=\partial\sigma_{\overline{\U}_L}(v).
\end{equation*} 
\end{proof}
\begin{rmrk}
Proposition \ref{prop-bathtub-subdiff} implies that for any maximiser $u$ of the relaxed bathtub problem,
\[v\in \partial \delta_{\overline{\U}_L} (u).\]
Proposition \ref{prop-indicator-subdiff} in Appendix \ref{subsec-app-indic} shows that this implies $u\in \partial\overline{\U}_L$. Propositions \ref{relaxed_bathtub} and \ref{prop-bathtub-subdiff} characterise exactly which elements of the boundary $\partial \overline{\U}_L$ are involved.

\end{rmrk}

\subsection{From the static bathtub principle to the dual problem and its corresponding cost}
\label{subsec-static-to-dynamic}

Following Section \ref{subsec-abstract-control} and
recalling Proposition \ref{prop-control-in-U} and \eqref{optimal-control-characterization}, we are looking for a cost function $\cost$ such that the corresponding optimal controls are on-off shape controls, and we have established that it suffices to find a conjugate functional $\cost^\ast$ satisfying two key properties. First, if there exists $\adjtar\in L^2(\Omega)$ such that $\cost^\ast$ is continuous at $L_T^\ast \adjtar$, and if we can provide the existence of a minimiser $\adjtar^\star$ of $\dualfct$, then $\pi$ is attained and there exists at least one optimal control. Second, any optimal control~$u^\star$ should satisfy~\eqref{optimal-control-characterization}
so~$\cost^\ast$ should be chosen so that the subdifferential $\partial \cost^\ast(L_T^\ast \adjtar^\star)$ contains only characteristic functions. Given Proposition \ref{prop-indicator-subdiff} and Section \ref{subsec-bathtub-convex-analysis}, elements of 
\[\partial \sigma_{\overline{\U}_L}(v), \quad v\in L^2(\Omega)\]
are bang-bang, in the sense that they are characteristic functions, under some mild conditions that $v$ must satisfy.

To go from the static optimisation problem to the adequate dual problem, we add a time dependency. Moreover, to ensure coercivity of the dual problem, we add a quadratic exponent. All in all, we choose the following conjugate:
\begin{equation}
\label{conj_cost_2}
\cost^\ast(p) :=\frac12 \left(\int_0^T \sigma_{\overline{\U}_L}(p(t)) \,dt\right)^2 =  \frac12 \left(\int_0^T  \int_0^{\min (\Phi_{p(t)}(0), \mL)} \Phi_{p(t)}^{-1}(s) \,ds \,dt\right)^2 , \quad \forall p \in E.\end{equation}

Since the approximate controllability problem corresponds to $G_{T,\e}^\ast:=\sigma_{\overline{B}(\tar - S_T y_0, \e)}$, this defines a dual problem of the form \eqref{dual-control-problem}.
As pointed out in Section \ref{subsec-abstract-control}, we are now dealing with an unconstrained optimisation problem (\ie the domain of the functions involved is the whole space $L^2(\Omega$)). 

We can now derive the corresponding constrained optimisation problem, by computing the actual cost~$\cost$ associated to the choice~\eqref{conj_cost_2} for $\cost^\ast$ . We find, as announced by \eqref{intro-cost-function} in the introduction:
\begin{lmm}
\label{lem-compute-costs}
The function $\cost^\ast$ defined by \eqref{conj_cost_2} satisfies $\cost^\star \in \Gamma_0(E)$. Defining
\begin{equation*}
\mathcal{M}(u):=\max\left(\|u\|_{L^\infty},  \frac{\|u\|_{L^1}}{\mL}\right),\quad \forall u \in L^2(\Omega),\end{equation*}
its Fenchel conjugate $(\cost^\ast)^\ast = \cost$ is given for $u \in E$ by 
\[\cost(u) =\frac12 \bigg(\sup_{t \in [0,T]}  \mathcal{M}^2(u(t, \cdot))\bigg) + \delta_{\{u \geq 0\}}(u) = \frac12 \bigg(\sup_{t \in [0,T]}  \max\Big(\|u(t, \cdot)\|_{L^\infty}, \frac{\|u(t, \cdot)\|_{L^1}}{\mL}\Big)^2 \bigg)+ \delta_{\{u \geq 0\}}(u).\]
\end{lmm}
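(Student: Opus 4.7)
The plan is to compute the conjugate $(\cost^\ast)^*$ by recognising the claimed right-hand side as a biconjugate and invoking Fenchel--Moreau in reverse. Writing $\tilde F$ for the function defined by the claimed formula, I will verify $\tilde F \in \Gamma_0(E)$, compute $\tilde F^*$ and identify it with $\cost^\ast$; this forces $\cost = (\cost^\ast)^* = \tilde F^{**} = \tilde F$.

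The first step is to reinterpret $\cost^\ast$ as half the square of a support function. Introduce the set $\mathcal{K} := \{u \in E : u(t,\cdot) \in \overline{\U}_L \text{ for a.e.\ } t \in (0,T)\}$, which is closed and convex in $E$ as it is cut out by the pointwise bounds $0 \leq u \leq 1$ and the a.e.\ integrated constraint $\int_\Omega u(t,\cdot)\,dx \leq \mL$. By Rockafellar's interchange formula for integral functionals,
\[\sigma_{\mathcal{K}}(p) = \int_0^T \sigma_{\overline{\U}_L}(p(t,\cdot))\,dt,\]
so that $\cost^\ast = \tfrac12 \sigma_{\mathcal{K}}^2$. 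Since $\sigma_{\mathcal{K}}$ is nonnegative, sublinear and lsc as a support function, this is convex, lsc and proper (it vanishes at $0$), which proves $\cost^\ast \in \Gamma_0(E)$.

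For the main computation, set $m(u) := \sup_{t \in [0,T]} \mathcal{M}(u(t,\cdot))$. I will first check $\tilde F \in \Gamma_0(E)$: $m$ is a seminorm on $E$ (supremum of the two seminorms $\|\cdot\|_{L^\infty(\Omega)}$ and $\|\cdot\|_{L^1(\Omega)}/(\mL)$ applied to time slices), whose lsc with respect to $L^2$ convergence follows by extracting an a.e.\ convergent subsequence and applying Fatou, while $\delta_{\{u \geq 0\}}$ is the convex, lsc and proper indicator of a closed convex cone. Then
\[\tilde F^*(p) = \sup_{u \geq 0} \Big( \langle u, p\rangle_E - \tfrac12 m(u)^2 \Big).\]
The key elementary identification
\[\{v \in L^2(\Omega) : v \geq 0, \ \mathcal{M}(v) \leq 1\} = \overline{\U}_L\]
upgrades to $\{u \in E : u \geq 0,\ m(u) \leq 1\} = \mathcal{K}$. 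Using the positive $1$-homogeneity of $m$, I parameterise $u = s v$ with $s \geq 0$ and $v \in \mathcal{K}$, which reduces $\tilde F^*$ to a one-dimensional quadratic maximisation:
\[\tilde F^*(p) = \sup_{v \in \mathcal{K}} \sup_{s \geq 0} \Big( s \langle v, p\rangle_E - \tfrac{s^2}{2} \Big) = \tfrac12 \sigma_{\mathcal{K}}(p)^2 = \cost^\ast(p),\]
using $\sigma_{\mathcal{K}} \geq 0$ (since $0 \in \mathcal{K}$) to drop the positive part. Fenchel--Moreau applied to $\tilde F$ then yields $\cost = \tilde F$.

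The main technical point is the Rockafellar interchange identifying $\sigma_{\mathcal{K}}$ with the time-integrated support function, which requires the measurable selection framework for normal integrands. Once this is granted, the remaining steps are elementary convex analysis organised around the correct scaling and the set identification $\{v \geq 0,\ \mathcal{M}(v) \leq 1\} = \overline{\U}_L$.
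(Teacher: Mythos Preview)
Your proof is correct and rests on the same core identification as the paper: both recognise $\cost^\ast = \tfrac12 H^2$ where $H(p)=\int_0^T \sigma_{\overline{\U}_L}(p(t,\cdot))\,dt$ is a support function (what you call $\sigma_{\mathcal K}$), and both compute the conjugate of $\tfrac12 H^2$ through a scaling parameter. The organisational difference is direction: the paper works \emph{forward}, invoking the general composition formula $(\tfrac12 H^2)^\ast(u)=\min_{\alpha>0}\big(\tfrac12\alpha^2+\alpha H^\ast(u/\alpha)\big)$ and then reading off $H^\ast=\delta_{\mathcal K}$ via the integral interchange lemma; you work \emph{backward}, guessing $\tilde F$, computing $\tilde F^\ast$ by the explicit parametrisation $u=sv$ with $v\in\mathcal K$, and closing with Fenchel--Moreau. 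Your route avoids quoting the Hiriart-Urruty formula but pays for it by having to verify $\tilde F\in\Gamma_0(E)$ independently (which the forward computation gets for free, since a conjugate is automatically in $\Gamma_0$). Your lsc sketch via a.e.\ subsequences and Fatou is correct, though a cleaner one-line argument is that the sublevel sets $\{\tilde F\le c\}=\sqrt{2c}\,\mathcal K$ are closed since $\mathcal K$ is closed in $E$.
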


\begin{proof}
Lemma~\ref{lem-integral} in Appendix \ref{app-subsec-lemmas} shows that $\cost^\ast \in \Gamma_0(E)$. 
We proceed by computing $(\cost^\ast)^\ast$. We have $\cost^\ast={\textstyle\frac12} H^2$, with $H(p):=\textstyle \int_0^T \sigma_{\overline{\U}_L}(p(t, \cdot)) \, dt$. Since $\sigma_{\overline{\U}_L} \in \Gamma_0(L^2(\Omega))$, the definition of the support function together with Lemma~\ref{lem-integral} in Appendix  show that $H \in \Gamma_0(E)$ with
\[H^\ast(u) = \int_0^T \sigma_{\overline{\U}_L}^\ast(u(t, \cdot))\,dt =  \int_0^T \delta_{\overline{\U}_L}(u(t, \cdot))\,dt.\]
Furthermore, we find the conjugate of $\textstyle \frac{1}{2} H^2$ by using~\eqref{square-conjugate} in Appendix~\ref{app-subsec-conj}, which leads to
\[\left(\frac12 H^2\right)^\ast (u) = \min_{\alpha>0} \left(\frac12 \alpha^2 + \alpha H^\ast\left(\frac{u}{\alpha}\right)\right),\]
where we used that $\mathrm{dom}(H)= E$.
Clearly, \[H^\ast\left(\frac{u}{\alpha}\right) = 0\quad \text{if} \quad u \geq 0 \; \text{ and } \; \sup_{t \in [0,T]}  \mathcal{M}(u(t, \cdot)) \leq \alpha,\]
and is $+\infty$ otherwise.

We end up with
\[\left(\frac12 H^2\right)^\ast (u) = \min_{\alpha>0} \left(\frac12 \alpha^2 + \delta_{\left\{\sup_{t \in [0,T]}\mathcal{M}(u(t, \cdot)) \leq \alpha\right\}}(u) \right) + \delta_{\{u \geq 0\}}(u) = \cost(u).\]
The lemma is proved.
\end{proof}

Note that $\cost^\ast$ is
(positively)-homogeneous of degree $2$. Indeed, $v \mapsto \sigma_{\overline{\U}_L}$ is positively-homogeneous of degree $1$, \ie $\sigma_{\overline{\U}_L}(\lambda v) = \lambda \sigma_{\overline{\U}_L}(v)$ for all $\lambda>0$, $ v\in L^2(\Omega)$.

We end this subsection by establishing a crucial property satisfied by $\cost^\ast$. It will play a crucial role in proving that the dual functional is coercive, akin to that of the unique continuation property in the Lions strategy described in Section \ref{subsec-results}.
\begin{lmm}
\label{continuation}
For all $\adjtar  \in L^2(\Omega)$, if $\cost^\ast(L_T^\ast \adjtar) = 0$, then $\adjtar \leq 0$.
\end{lmm}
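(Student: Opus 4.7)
The plan is to unwind the definition of $\cost^\ast$, use the non-negativity of the support function to get a pointwise-in-time statement on the adjoint trajectory, characterise vanishing of $\sigma_{\overline{\U}_L}$ as non-positivity, and finally transfer this to the terminal time via continuity of the backward semigroup.

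First I would observe that $0 \in \overline{\U}_L$, so $\sigma_{\overline{\U}_L}(v) \geq \langle 0, v\rangle_{L^2} = 0$ for every $v \in L^2(\Omega)$. Writing $p := L_T^\ast \adjtar$, the definition \eqref{conj_cost_2} reads $\cost^\ast(p) = \tfrac{1}{2}\bigl(\int_0^T \sigma_{\overline{\U}_L}(p(t,\cdot))\,dt\bigr)^2$, so the assumption $\cost^\ast(L_T^\ast \adjtar)=0$ forces $\sigma_{\overline{\U}_L}(p(t,\cdot)) = 0$ for a.e.\ $t \in (0,T)$.

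The key step is then to show that for $v \in L^2(\Omega)$, $\sigma_{\overline{\U}_L}(v) = 0$ is equivalent to $v \leq 0$ a.e.\ in $\Omega$. The implication $v\leq 0 \Rightarrow \sigma_{\overline{\U}_L}(v)=0$ is immediate from $u \geq 0$ for every $u \in \overline{\U}_L$, together with the fact that $u=0$ achieves the value $0$. For the converse, if $|\{v>0\}|>0$ one picks a measurable $E \subset \{v>0\}$ with $0 < |E| \leq L|\Omega|$ (obtained by intersecting $\{v>0\}$ with $\{v>1/n\}$ for $n$ large enough if necessary), so that $\chi_E \in \U_L \subset \overline{\U}_L$ and therefore $\sigma_{\overline{\U}_L}(v) \geq \int_E v > 0$, a contradiction.

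Combining the two previous steps yields $p(t,\cdot)\leq 0$ a.e.\ in $\Omega$ for a.e.\ $t \in (0,T)$. To conclude, I would use that $p = L_T^\ast \adjtar$ belongs to $C([0,T];L^2(\Omega))$ as the trajectory of the adjoint $C_0$-semigroup starting from $\adjtar$ at time $T$. Choosing a sequence $t_n \nearrow T$ with $p(t_n,\cdot)\leq 0$ and invoking closedness of the negative cone $\{v \in L^2(\Omega) : v \leq 0 \text{ a.e.}\}$ in $L^2(\Omega)$ gives $\adjtar = p(T,\cdot) \leq 0$.

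No step is truly hard; the only mildly subtle point is the characterisation $\sigma_{\overline{\U}_L}(v)=0 \Leftrightarrow v \leq 0$, which is where the constraint $|\omega|\leq L|\Omega|$ enters (together with $L<1$, so that one can always fit a set of positive measure inside $\{v>0\}$ with size at most $L|\Omega|$).
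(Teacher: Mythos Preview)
Your proof is correct and follows essentially the same route as the paper's: non-negativity of $\sigma_{\overline{\U}_L}$ forces $\sigma_{\overline{\U}_L}(p(t,\cdot))=0$ a.e., the characterisation $\sigma_{\overline{\U}_L}(v)=0\Leftrightarrow v\le 0$ is obtained by testing against a characteristic function supported in $\{v>0\}$, and continuity $p\in C([0,T];L^2(\Omega))$ transfers the inequality to $t=T$. One small aside: in your closing remark it is really $L>0$ (not $L<1$) that guarantees you can fit a subset of positive measure and size at most $L|\Omega|$ inside $\{v>0\}$.
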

\begin{proof}
It is easily seen that $\sigma_{\overline{\U}_L}\geq 0$, and, for $v\in L^2(\Omega)$, $\sigma_{\overline{\U}_L}(v)> 0$ as soon as $v>0$ on a set of positive measure, by taking the scalar product of $v$ against a well chosen element of $\overline{\U}_L$. Consequently, if $\sigma_{\overline{\U}_L}(v)=0$, then $v\leq0$.

Now recall that
\[F_T^\ast(L_T^\ast \adjtar)=\frac12 \left(\int_0^T \sigma_{\overline{\U}_L}(L_T^\ast \adjtar(t, \cdot)) \,dt\right)^2 .\]
If $F_T^\ast(L_T^\ast \adjtar)=0$, then, as $\sigma_{\overline{\U}_L}\geq 0$, for almost every $t\in (0,T)$, $\sigma_{\overline{\U}_L}(L_T^\ast \adjtar)=0$. Thus, $L_T^\ast p_f(t, \cdot)\leq 0$. In particular, since $L_T^\ast \adjtar \in C([0,T]; L^2(\Omega)) $, this implies that $p_f\leq 0.$
\end{proof}

\section{Approximate controllability results}

\label{sec-controllability}

In this section, we state and prove our main result on approximate controllability. The full statement for our Theorem~\ref{informal-shape} is given with more details below, for general linear operators, satisfying the properties given in Section \ref{subsec-general-results}.

We are considering the following optimal control problem:
\begin{equation}
    \label{ocp-primal}
\pi= \inf_{u \in E} \cost(u) + G_{T,\e}(L_T u)=  \inf_{u \in E} \left\{ \frac12 \sup_{t\in[0,T]} \max \left(\|u(t)\|_{L^\infty} , \frac{\|u(t)\|_{L^1}}{\mL}\right)^2 + \delta_{\overline{B}(\tar - S_T y_0, \e)}(L_T u)\right\},
\end{equation}
whose dual problem is
\begin{equation}
    \label{ocp-dual}
    d=-\inf_{\adjtar\in L^2(\Omega)} \dualfct(\adjtar)=-\inf_{\adjtar \in L^2(\Omega)} \left\{\frac12 \left(\int_0^T \sigma_{\overline{\U}_L}(L_T^\ast \adjtar (t))\, dt\right)^2 - \langle \tar - S_T y_0, \adjtar\rangle_{L^2}+ \e \|\adjtar\|_{L^2}\right\}.
\end{equation}
\begin{thrm}\label{thm-controllability}
Assume that $A^\ast$ satisfies the ~\eqref{main_property} property  and that $\partial_t - A^\ast$ is analytic-hypoelliptic. Then for the cost function $\cost$ defined by~\eqref{intro-cost-function},
\begin{itemize}
    \item the strong duality $\pi = d$ holds,
    \item the dual problem \eqref{ocp-dual} is attained at a unique minimiser $\adjtar^\star\in L^2(\Omega)$,
    \item there exists a unique optimal control~$u^\star \in E$ for the primal problem \eqref{ocp-primal}.
\end{itemize}

Furthermore,
the optimal control is given by 
\begin{equation} 
\label{shape-2}
u^\star(t,\cdot) = \overline{M} \, \chi_{\{p^\star(t,\cdot) > h(p^\star(t,\cdot))\}},\quad \overline{M} = \int_0^T \int_{\{p^\star(t,\cdot) > h(p^\star(t,\cdot))\}} p^\star(t,x) \,dt \, dx, \end{equation}
where $h$ is defined by~\eqref{level-set-function}, and where $p^\star=L_T^\star \adjtar^\star$ is the solution of the adjoint equation~\eqref{backward} satisfying $p^\star(T)=\adjtar^\star$.
\end{thrm}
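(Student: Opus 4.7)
The plan is to verify the two hypotheses of Proposition~\ref{prop-control-in-U}, establish uniqueness of the dual minimiser, and then extract the shape of $u^\star$ via the Fenchel extremality relation and the bathtub principle. For \emph{strong duality} $\pi = d$, I would check continuity of $\cost^\ast$ at $L_T^\ast 0 = 0$: since every element of $\overline{\U}_L$ has $L^2$-norm at most $\sqrt{\mL}$, Cauchy--Schwarz yields $\sigma_{\overline{\U}_L}(v) \leq \sqrt{\mL}\,\|v\|_{L^2}$ and hence $\cost^\ast(p) \leq \tfrac{T\mL}{2}\|p\|_E^2$, so $\cost^\ast$ is continuous at the origin, and the strong-duality half of Proposition~\ref{prop-control-in-U} applies as soon as $d < +\infty$.

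To obtain the latter and a \emph{dual minimiser}, I would exploit the $2$-homogeneity of $\cost^\ast$ to prove coercivity of $\dualfct$ by the standard renormalisation trick: for $\|\adjtar_n\|_{L^2} \to +\infty$, set $q_n := \adjtar_n/\|\adjtar_n\|_{L^2}$ and extract a weak limit $q_n \rightharpoonup \bar q$. If $\liminf \cost^\ast(L_T^\ast q_n) > 0$ the quadratic part dominates; otherwise, weak lower semicontinuity gives $\cost^\ast(L_T^\ast \bar q) = 0$, Lemma~\ref{continuation} forces $\bar q \leq 0$, and then the nonnegativity of $\tar - S_T y_0$ together with the $\e\|\adjtar_n\|_{L^2}$ term drives $\dualfct(\adjtar_n)/\|\adjtar_n\|_{L^2} \to \e$, yielding coercivity. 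The direct method, combined with weak lower semicontinuity of $\dualfct$, then produces~$\adjtar^\star$.

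For \emph{uniqueness} of this minimiser, assume two distinct candidates $\adjtar_1,\adjtar_2$. If they are linearly independent the $L^2$-norm is strictly convex on $[\adjtar_1,\adjtar_2]$, contradicting constancy of $\dualfct$ on the segment. If $\adjtar_2 = t\adjtar_1$ with $t<0$, the segment passes through $0$ and so $-\pi = \dualfct(0) = 0$, reducing to the trivial case $\|\tar - S_T y_0\|_{L^2} \leq \e$ whose unique minimum is $\adjtar^\star = 0$. When $\adjtar_2 = t\adjtar_1$ with $t>0$, $t \neq 1$, the map $\lambda \mapsto \dualfct(\lambda \adjtar_1)$ is strictly convex when $\cost^\ast(L_T^\ast\adjtar_1) > 0$ and strictly increasing for $\lambda > 0$ when $\cost^\ast(L_T^\ast\adjtar_1) = 0$ (using Lemma~\ref{continuation} and $\tar - S_T y_0 \geq 0$), a contradiction in both subcases.

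Finally, the Fenchel extremality relation reads $u^\star \in \partial \cost^\ast(p^\star)$ with $p^\star = L_T^\ast \adjtar^\star$. Writing $\cost^\ast = \tfrac{1}{2} H^2$ for $H(p) := \int_0^T \sigma_{\overline{\U}_L}(p(t,\cdot))\,dt$, a subdifferential chain rule gives $\partial \cost^\ast(p^\star) = H(p^\star)\,\partial H(p^\star)$, and $q \in \partial H(p^\star)$ iff $q(t,\cdot) \in \partial \sigma_{\overline{\U}_L}(p^\star(t,\cdot))$ for a.e.~$t$. Analytic-hypoellipticity of $\partial_t - A^\ast$ makes each slice $p^\star(t,\cdot)$ real-analytic on $\Omega$ for $t<T$, while the \eqref{main_property} property applied to $A^\ast$ precludes any such slice from being constant in $x$ (when $\adjtar^\star \neq 0$); all level sets of $p^\star(t,\cdot)$ therefore have zero Lebesgue measure. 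Lemma~\ref{relaxed_bathtub} then produces the singleton $\partial \sigma_{\overline{\U}_L}(p^\star(t,\cdot)) = \{\chi_{\{p^\star(t,\cdot) > h(p^\star(t,\cdot))\}}\}$ and evaluates $\sigma_{\overline{\U}_L}(p^\star(t,\cdot)) = \int_{\{p^\star(t)>h(p^\star(t))\}} p^\star(t,\cdot)$, yielding both formulas in~\eqref{shape-2} and the uniqueness of $u^\star$. The main obstacle is carrying out this subdifferential computation for $\tfrac{1}{2} H^2$ together with the time-wise bathtub analysis: it is precisely here that both analytic-hypoellipticity and the~\eqref{main_property} property are indispensable, as they exclude plateaus of $p^\star(t,\cdot)$ that would otherwise spoil uniqueness of the bang-bang maximiser.
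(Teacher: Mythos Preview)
Your overall strategy coincides with the paper's: continuity of $\cost^\ast$ at $0$ for strong duality, the renormalisation argument together with Lemma~\ref{continuation} for coercivity, and the chain rule $\partial(\tfrac12 H^2)=H\,\partial H$ combined with the time-wise bathtub Lemma~\ref{relaxed_bathtub} (using analytic-hypoellipticity and~\eqref{main_property} to rule out fat level sets) for the explicit form~\eqref{shape-2}.

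The one place where you diverge from the paper is the \emph{uniqueness of the dual minimiser}. The paper argues indirectly: any optimal $L_T u^\star$ lies on the sphere $S(\tar-S_T y_0,\e)$, so by strict convexity of the Hilbert ball all optimal final states coincide; the relation $\adjtar^\star\in-\partial\delta_{\overline B}(L_T u^\star)$ then confines $\adjtar^\star$ to a half-line, on which $\dualfct$ is a quadratic with a single minimum. Your route is more direct and self-contained: exploit strict convexity of $\|\cdot\|_{L^2}$ in the term $\e\|\adjtar\|_{L^2}$, and when the two candidate minimisers lie on a common ray, analyse the one-variable function $\lambda\mapsto\dualfct(\lambda\adjtar_1)$. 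This is a genuine simplification, as it avoids the detour through the primal optimal final state.

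Two small points to tighten. First, in the coercivity step you write that $\dualfct(\adjtar_n)/\|\adjtar_n\|\to\e$; what you actually obtain (and all you need) is $\liminf\geq\e$, since the linear term contributes $-\langle\tar-S_T y_0,\bar q\rangle\geq0$ rather than~$0$. Second, your case ``$\adjtar_2=t\adjtar_1$ with $t<0$'' concludes by invoking that in the trivial regime $\|\tar-S_T y_0\|\leq\e$ the unique minimiser is~$0$; this is true but you have not argued it. The cleanest patch is to observe that once $0$ lies on the segment it is itself a minimiser, and then apply your own ray analysis to the pair $(0,\adjtar_1)$: either $\cost^\ast(L_T^\ast\adjtar_1)>0$ makes $\lambda\mapsto\dualfct(\lambda\adjtar_1)$ strictly convex, or $\cost^\ast(L_T^\ast\adjtar_1)=0$ forces $\adjtar_1\leq0$ via Lemma~\ref{continuation} and the map is strictly increasing on $\lambda>0$; either way $\adjtar_1\neq0$ cannot be optimal.
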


\begin{rmrk}
In fact, if $\e>0$ is such that  $\tar \in \overline{B}(S_T y_0, \e)$, we prove that $\adjtar^\star = 0$ and the formula above returns $u^\star = 0$, which obviously does steer the system to the target ball. 
\end{rmrk}
 
\begin{rmrk}
As mentioned in the introduction, Theorem \ref{thm-controllability} holds for uniformly elliptic operators of the form \eqref{def_elliptic} with analytic coefficients, and in particular the classical heat equation with Dirichlet boundary conditions, on a bounded, open, connected domain with $C^2$ boundary. 
\end{rmrk}

Throughout this section, we assume the hypotheses sufficient for Theorem~\ref{thm-controllability}, \ie that $A^\ast$ satisfies the~\eqref{main_property} property and that $\partial_t - A^\ast$  is analytic-hypoelliptic. The proof is then scattered into the section as follows:
\begin{itemize}
    \item First, we establish that strong duality holds.
    \item Second, we prove that the corresponding dual functional is coercive: hence, the dual functional attains its minimum (the dual problem attains its maximum).
    \item Third
    we prove~\eqref{shape-2}.
    \item Finally, we investigate the uniqueness of optimal variables.
\end{itemize}

\begin{rmrk}
\label{rmk-general-existence}
As the proofs show, the first two steps and the uniqueness of dual optimal variables are valid for any operator $A$. In particular, they do not require that $A^\ast$ satisfy the~\eqref{main_property} property and that $\partial_t - A^\ast$  be analytic-hypoelliptic. Hence, strong duality and existence of optimal controls does not require any specific assumption the semigroup must satisfy. This remark will be of importance in the next subsection where we manipulate optimal controls without making these two hypotheses. 
\end{rmrk}

\subsection{Strong duality}

\begin{lmm}\label{lem-contact-condition}
$\cost^\ast$ is continuous at $0 = L_T^\ast 0$.
\end{lmm}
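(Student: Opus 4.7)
The plan is to show the much stronger fact that $\cost^\ast$ is controlled by $\|\cdot\|_E^2$ near the origin, from which continuity at $0$ follows immediately since $\cost^\ast(0)=0$ and $\cost^\ast\geq 0$.

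First, I would obtain a pointwise bound on the static support function $\sigma_{\overline{\U}_L}$. Any $u\in\overline{\U}_L$ satisfies $0\leq u\leq 1$ and $\int_\Omega u\leq \mL$, hence $\|u\|_{L^2(\Omega)}^2\leq \|u\|_{L^1(\Omega)}\leq \mL$. By Cauchy--Schwarz,
\[
0\leq \sigma_{\overline{\U}_L}(v)=\sup_{u\in\overline{\U}_L}\int_\Omega uv\,dx \leq \sqrt{\mL}\,\|v\|_{L^2(\Omega)},\qquad \forall v\in L^2(\Omega),
\]
where nonnegativity follows from $0\in\overline{\U}_L$.

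Second, integrating in time and applying Cauchy--Schwarz once more,
\[
\int_0^T \sigma_{\overline{\U}_L}(p(t,\cdot))\,dt \leq \sqrt{\mL}\int_0^T\|p(t,\cdot)\|_{L^2(\Omega)}\,dt \leq \sqrt{\mL\,T}\,\|p\|_E.
\]
Squaring and using the definition~\eqref{conj_cost_2} of $\cost^\ast$ yields
\[
0\leq \cost^\ast(p)\leq \tfrac12\,\mL\,T\,\|p\|_E^2,\qquad \forall p\in E.
\]

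Since $\cost^\ast(0)=0$ and $\cost^\ast\geq 0$, this quadratic upper bound forces $\cost^\ast(p)\to 0$ as $p\to 0$ in $E$, giving continuity at $0$. There is no real obstacle here: the result is a direct consequence of the fact that $\overline{\U}_L$ is bounded in $L^2(\Omega)$ (indeed in $L^\infty(\Omega)$), combined with two applications of Cauchy--Schwarz.
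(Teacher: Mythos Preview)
Your proof is correct and follows essentially the same route as the paper's: bound $\sigma_{\overline{\U}_L}(v)$ by a constant times $\|v\|_{L^2(\Omega)}$ via Cauchy--Schwarz and boundedness of $\overline{\U}_L$ in $L^2(\Omega)$, then apply Cauchy--Schwarz in time to obtain a quadratic bound $\cost^\ast(p)\leq C\|p\|_E^2$. The only difference is cosmetic: you use the sharper estimate $\|u\|_{L^2}^2\leq\|u\|_{L^1}\leq \mL$ (exploiting both constraints defining $\overline{\U}_L$), whereas the paper simply uses $\|u\|_{L^2}\leq|\Omega|^{1/2}$, yielding the constant $|\Omega|$ instead of your $\mL=L|\Omega|$.
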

\begin{proof}
By the Cauchy-Schwarz inequality, 
\[\forall u \in \overline{\U}_L, \quad \langle u,v \rangle_{L^2} \leq \|u\|_{L^2} \|v\|_{L^2} \leq  |\Omega|^{1/2} \|v\|_{L^2}, \]
which leads to 
 \[\sigma_{\overline{\U}_L}(v) = \int_0^{\min (\Phi_v(0), \mL)} \Phi_v^{-1}\leq  |\Omega|^{1/2} \|v\|_{L^2}.\]
As a result, we may bound  with the Cauchy-Schwarz inequality again
\[0 \leq \cost^\ast(p) \leq \frac12 |\Omega| \left(\int_0^T \|p(t, \cdot)\|_{L^2}\, dt\right)^2 \leq \frac12 T |\Omega| \,\|p\|_E^2,\] 
hence the continuity of $\cost^\ast$ at $0 = L_T^\ast 0$.
\end{proof}

The above lemma shows that the first condition of Proposition~\ref{prop-control-in-U} is satisfied, \ie strong duality holds.

\subsection{Coercivity of $\dualfct$, nonnegative approximate controllability}

\begin{prpstn}
\label{prop-coercive}
The functional $\dualfct$ defined by
\begin{equation}
\label{dualfct}
\dualfct(\adjtar) =\left(\frac12 \int_0^T   \sigma_{\overline{\U_L}}(L_T^\ast \adjtar) dt\right)^2- \langle \tar - S_T y_0, \adjtar\rangle_{L^2} + \varepsilon \|\adjtar\|_{L^2}.
\end{equation}
is coercive on $L^2(\Omega)$, \ie
\[   \dualfct(\adjtar) \xrightarrow[\|\adjtar\|_{L^2}\to \infty]{} \infty,\]
and thus attains its minimum.
\end{prpstn}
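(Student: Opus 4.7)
The plan is to argue by contradiction, exploiting the positive $1$-homogeneity of the support function $\sigma_{\overline{\U}_L}$. Assume there exists a sequence $(\adjtar_n)$ with $\|\adjtar_n\|_{L^2}\to\infty$ along which $\dualfct(\adjtar_n)$ stays bounded above. Setting $\tilde\adjtar_n:=\adjtar_n/\|\adjtar_n\|_{L^2}$ and dividing~\eqref{dualfct} by $\|\adjtar_n\|_{L^2}$ gives
\[
\frac{\dualfct(\adjtar_n)}{\|\adjtar_n\|_{L^2}} = \frac{\|\adjtar_n\|_{L^2}}{2}\Big(\int_0^T \sigma_{\overline{\U}_L}(L_T^\ast \tilde\adjtar_n(t))\,dt\Big)^2 - \langle \tar - S_T y_0,\tilde\adjtar_n\rangle_{L^2} + \e,
\]
whose left-hand side tends to $0$. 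Since the scalar product stays bounded (Cauchy--Schwarz) and the squared term is nonnegative, this forces $\int_0^T \sigma_{\overline{\U}_L}(L_T^\ast \tilde\adjtar_n(t))\,dt \to 0$.

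The second step is to extract a subsequence, still denoted $(\tilde\adjtar_n)$, converging weakly in $L^2(\Omega)$ to some $\tilde\adjtar$. I would then observe that the functional $\adjtar \mapsto \int_0^T\sigma_{\overline{\U}_L}(L_T^\ast \adjtar(t))\,dt$ is the composition of the continuous linear operator $L_T^\ast:L^2(\Omega)\to E$ with the convex lsc functional on $E$ furnished by Lemma~\ref{lem-integral} applied to $\sigma_{\overline{\U}_L}\in\Gamma_0(L^2(\Omega))$; it is therefore convex and weakly lsc on $L^2(\Omega)$. Passing to the liminf yields $\int_0^T \sigma_{\overline{\U}_L}(L_T^\ast \tilde\adjtar(t))\,dt = 0$, upon which Lemma~\ref{continuation} delivers $\tilde\adjtar \leq 0$.

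The conceptual crux is then the use of the standing hypothesis $\tar \geq S_T y_0$: combined with $\tilde\adjtar \leq 0$, it produces $\langle \tar - S_T y_0,\tilde\adjtar\rangle_{L^2}\leq 0$. Passing to the liminf in the displayed identity (the squared term is nonnegative and the linear term converges by weak convergence) then gives
\[
\liminf_{n\to\infty} \frac{\dualfct(\adjtar_n)}{\|\adjtar_n\|_{L^2}} \geq -\langle \tar - S_T y_0,\tilde\adjtar\rangle_{L^2} + \e \geq \e > 0,
\]
contradicting $\dualfct(\adjtar_n)/\|\adjtar_n\|_{L^2}\to 0$. Coercivity being established, attainment of the infimum will follow by the direct method, using that $\dualfct$ is convex and strongly, hence weakly, lower semicontinuous.

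The main obstacle I anticipate lies in exploiting the peculiar $1$-homogeneous-squared structure of $\cost^\ast$: after normalisation its contribution acquires an amplifying prefactor $\|\adjtar_n\|_{L^2}$, so one is reduced to extracting sign information from the weak limit $\tilde\adjtar$ in order to rule out the remaining terms being too negative. Lemma~\ref{continuation} (which plays the role that unique continuation plays in the classical Lions coercivity proof) together with the comparison-based assumption $\tar \geq S_T y_0$ are precisely what make this step succeed.
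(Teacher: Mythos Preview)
Your argument is correct and follows essentially the same route as the paper's proof: normalise by $\|\adjtar_n\|_{L^2}$, exploit the degree-$2$ positive homogeneity of $\cost^\ast$, use weak lower semicontinuity to pass to the limit, invoke Lemma~\ref{continuation} to obtain $\tilde\adjtar\leq 0$, and conclude via the standing hypothesis $\tar\geq S_T y_0$. The only cosmetic differences are that the paper proves directly $\liminf \dualfct(\adjtar)/\|\adjtar\|_{L^2}>0$ (splitting into the cases $\liminf \cost^\ast(L_T^\ast q_f^n)>0$ and $=0$) rather than framing it as a contradiction, and that your phrase ``left-hand side tends to $0$'' should strictly read ``has $\limsup\leq 0$''; neither affects the substance.
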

\begin{proof}
Since we know that $\dualfct$ is convex, proper, strongly lsc, if $\dualfct$ is coercive then $\textstyle \inf_{\adjtar \in L^2(\Omega)} \dualfct(\adjtar) \neq -\infty$, and that it is actually attained.

We will actually prove a stronger condition than coercivity, namely
\begin{equation*}
    \liminf_{\|\adjtar\|_{L^2}\to\infty} \frac{\dualfct(\adjtar)}{\|\adjtar\|_{L^2}}>0. 
\end{equation*}
Our method of proof follows that of~\cite{micu_introduction_2004, Kunisch-Wang-2013}. Take a sequence $\|\adjtar^n\|_{L^2}\to \infty$. We denote $\textstyle q_f^n := \frac{\adjtar^n}{\|\adjtar^n\|_{L^2}}$.
By positive homogeneity of $\cost^\ast$ (of degree $2$), we have
\begin{equation*}
    \frac{\dualfct(p^n_f)}{\|p^n_f\|_{L^2}}= \|p^n_f\|_{L^2} \cost^\ast(L_T^\ast q_f^n)-\left\langle \tar - S_T y_0, q_f^n\right\rangle_{L^2} + \varepsilon
\end{equation*}
and hence if $\liminf_{n\to \infty} \cost^\ast(L_T^\ast q_f^n)>0$, then \begin{equation*} \liminf_{n\to \infty} \frac{\dualfct(p^n_f)}{\|p^n_f\|{L^2}}=+\infty.\end{equation*}
Let us now treat the remaining case where $\liminf_{n\to \infty} \cost^\ast(L_T^\ast q_f^n)= 0$. Since $\|q_f^n\|_{L^2} = 1$, upon extraction of a subsequence, we have $q_f^n \rightharpoonup q_f$ weakly in $L^2(\Omega)$ for some $q_f \in L^2(\Omega)$.
Since $L_T^\ast \in L(L^2(\Omega),E)$, we have $L_T^\ast q_f^n \rightharpoonup L_T^\ast q_f$ weakly in $E$.

Now, since $\cost^\ast$ is convex and strongly lsc on $E$, it is (sequentially) weakly lsc and taking the limit we obtain $\cost^\ast(L_T^\ast q_f) = 0$.
By Lemma~\ref{continuation}, we infer that $q_f \leq 0$.

Then, recalling that the target satisfies $\tar - S_T y_0 \geq 0 \Leftrightarrow \tar \geq S_T y_0$, we end up with 
\begin{equation*}
   \liminf_{n\to \infty} \frac{\dualfct(p^n_f)}{\|p^n_f\|_{L^2}}\geq  -\langle \tar - S_T y_0, q_f \rangle_{L^2}+ \varepsilon  \geq \varepsilon >0,
\end{equation*}
which concludes the proof.
 \end{proof}

\subsection{Characterisation of the minimisers}
In this subsection and the subsequent one, it will be convenient to discuss depending on the assumption
\begin{equation}
\label{non-trivial-target}
       \tar \notin \overline{B}(S_T y_0, \e),
\end{equation}
in which case the target is not reached with the trivial control $u=0$.
Note that, if~\eqref{non-trivial-target} is not satisfied, the control $u=0$ steers $y_0$ to the target, and is indeed a control in~$\U_{\textrm{shape}}^L$. 

We first remark the following fact:
\begin{lmm}\label{lem-non-zero-dual-optimal} 
Assumption~\eqref{non-trivial-target} holds if and only if any minimiser $\adjtar^\star$  of~\eqref{ocp-dual} satisfies $\adjtar^\star \neq 0$.
\end{lmm}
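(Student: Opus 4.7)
The plan is to reformulate the equivalence in terms of the value of $\inf \dualfct$. Since $\dualfct(0)=0$ and, by Proposition~\ref{prop-coercive}, $\dualfct$ attains its infimum on $L^2(\Omega)$, the statement ``$0$ is a minimiser of $\dualfct$'' is equivalent to $\inf \dualfct = 0$, which in turn is equivalent to $\dualfct \geq 0$ on all of $L^2(\Omega)$. Passing to the contrapositive, the lemma therefore reduces to showing that $\tar \in \overline{B}(S_T y_0, \e)$ if and only if $\dualfct \geq 0$ everywhere on $L^2(\Omega)$.

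For the easy direction, assume $\|\tar - S_T y_0\|_{L^2} \leq \e$. Then the Cauchy--Schwarz inequality gives
\[
-\langle \tar - S_T y_0, \adjtar\rangle_{L^2} + \e \|\adjtar\|_{L^2} \geq \big(\e - \|\tar - S_T y_0\|_{L^2}\big)\|\adjtar\|_{L^2}\geq 0
\]
for every $\adjtar \in L^2(\Omega)$. Adding the nonnegative squared term in \eqref{dualfct} yields $\dualfct(\adjtar) \geq 0$, with equality at $\adjtar = 0$.

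For the converse, assume $\|\tar - S_T y_0\|_{L^2} > \e$. The idea is to test $\dualfct$ along the ray $\adjtar_\lambda := \lambda(\tar - S_T y_0)$, $\lambda > 0$, chosen so that the linear term $-\langle \tar - S_T y_0, \adjtar\rangle_{L^2}$ is as negative as possible per unit norm. Using the positive homogeneity of degree~$1$ of $\sigma_{\overline{\U}_L}$, expanding $\dualfct(\adjtar_\lambda)$ produces a $O(\lambda^2)$ contribution from the squared term (with finite coefficient, because $\sigma_{\overline{\U}_L}$ has at most linear growth on $L^2(\Omega)$ and $L_T^\ast$ is bounded, as already noted in the proof of Lemma~\ref{lem-contact-condition}), plus the linear contribution $\lambda \|\tar - S_T y_0\|_{L^2}\big(\e - \|\tar - S_T y_0\|_{L^2}\big)$, whose coefficient is strictly negative by assumption. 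Hence $\dualfct(\adjtar_\lambda) < 0$ for sufficiently small $\lambda > 0$, so $0$ is not a minimiser of $\dualfct$.

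No essential obstacle is expected; the only non-mechanical step is the choice of the ray direction $\tar - S_T y_0$ in the converse, which is natural given that it is the unique direction (up to a positive scalar) making the linear part of $\dualfct$ dominate the quadratic part near the origin.
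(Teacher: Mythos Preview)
Your proof is correct, but it follows a genuinely different route from the paper's. The paper argues via strong duality and the primal problem: if $0$ minimises $\dualfct$, then $d=0$, hence $\pi=0$ by strong duality, hence some optimal control $u^\star$ satisfies $\cost(u^\star)+G_{T,\e}(L_Tu^\star)=0$; since both summands are nonnegative this forces $u^\star=0$ and $G_{T,\e}(0)=0$, i.e.\ $\tar\in\overline{B}(S_Ty_0,\e)$. The converse is handled by observing that $u=0$ steers to the target ball, so $\pi=d=0$, and $\dualfct(0)=0$ makes $0$ optimal. By contrast, you work entirely on the dual side: Cauchy--Schwarz for one direction, and a small-$\lambda$ ray test exploiting the degree-$2$ homogeneity of the squared term for the other. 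Your argument is more elementary in that it never invokes strong duality or the primal problem at all, and it makes transparent \emph{why} $0$ fails to be a minimiser when $\|\tar-S_Ty_0\|_{L^2}>\e$ (the linear part of $\dualfct$ dominates near the origin). The paper's approach, on the other hand, is more structural and foregrounds the primal--dual correspondence that underlies the whole method.
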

\begin{proof}
Suppose that $\adjtar^\star=0$ minimises~\eqref{ocp-dual}. Then, $d=0$ and by strong duality, $\pi=0$. By Proposition~\ref{prop-coercive}, this value is attained: there exists some optimal control $u^\star$ such that
\[\cost(u^\star)+G_{T,\e}(L_T u^\star)=0.\]
This implies that $\cost(u^\star) = G_{T,\e}(L_T u^\star) = 0$.
On the one hand, this leads to $u^\star(t, \cdot) = 0$ for a.e $t\in (0,T)$, \ie $u^\star=0$,
and on the other hand $G_{T,\e}(L_T u^\star) = G_{T,\e}(0) =0$, which is equivalent to $0 \in \overline{B}(\tar - S_T y_0, \e) \iff y_f \in \overline{B}(S_T y_0, \e)$. This contradicts Assumption \eqref{non-trivial-target}.

Conversely, if Assumption~\eqref{non-trivial-target} does not hold, then $u=0$ drives $y_0$ to the target ball, hence $\pi=d=0$. Since $\dualfct(0)=0$, $\adjtar^\star = 0$ minimises the dual problem~\eqref{ocp-dual}.
\end{proof}

\begin{prpstn}

\label{prop-second-cost}
Any optimal control for~\eqref{ocp-primal} is of the form~\eqref{shape-2}, where $p^\star$ denotes the solution of the adjoint equation~\eqref{backward} such that $p^\star(T)=\adjtar^\star$, where $\adjtar^\star$ is any dual optimal variable.
\end{prpstn}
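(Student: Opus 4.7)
The plan is to unpack the characterization \eqref{optimal-control-characterization}: any primal optimal $u^\star$ must satisfy $u^\star \in \partial \cost^\ast(L_T^\ast \adjtar^\star)$ for any dual optimal $\adjtar^\star$ (whose existence follows from Proposition \ref{prop-coercive}). The strategy is then to compute this subdifferential explicitly and show that, under the two structural hypotheses of Theorem \ref{thm-controllability}, it reduces to the single element appearing in \eqref{shape-2}.

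Writing $\cost^\ast = \tfrac{1}{2} H^2$ with the convex, nonnegative, lsc functional $H(p) := \int_0^T \sigma_{\overline{\U}_L}(p(t,\cdot))\,dt$ (Lemma \ref{lem-integral}), the subdifferential chain rule, applied to $\phi\circ H$ with $\phi(s)=\tfrac{1}{2}\max(s,0)^2$ (which is convex and $C^1$ with $\phi'(s)=\max(s,0)$), gives $\partial \cost^\ast(p) = H(p)\,\partial H(p)$. The integral chain rule for $H$ itself then yields $\partial H(p^\star) = \{u \in E : u(t,\cdot) \in \partial \sigma_{\overline{\U}_L}(p^\star(t,\cdot))\ \text{for a.e.}\ t \in (0,T)\}$, and by Proposition \ref{prop-bathtub-subdiff} each slice $\partial \sigma_{\overline{\U}_L}(p^\star(t,\cdot))$ coincides with the set of maximisers of the relaxed bathtub problem with data $v = p^\star(t,\cdot)$.

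The crux is to establish that, for a.e. $t \in (0,T)$, the function $p^\star(t,\cdot)$ has all level sets of zero Lebesgue measure, for then Lemma \ref{relaxed_bathtub} furnishes the unique maximiser $\chi_{\{p^\star(t,\cdot) > h(p^\star(t,\cdot))\}}$. This is precisely where the two structural assumptions intervene. Since $p^\star$ solves the homogeneous adjoint equation \eqref{backward}, analytic-hypoellipticity of $\partial_t - A^\ast$ renders $p^\star(t,\cdot)$ real-analytic on the connected open set $\Omega$, and a non-constant real-analytic function on such a set has level sets of measure zero. It therefore suffices to rule out that $p^\star(t,\cdot)$ is identically constant for some $t \in (0,T)$; but $p^\star(t,\cdot) = S_{T-t}^\ast \adjtar^\star$, and the \eqref{main_property} property applied to $A^\ast$ at time $T-t \in (0,T)$ would then force $\adjtar^\star = 0$. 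Under \eqref{non-trivial-target}, Lemma \ref{lem-non-zero-dual-optimal} excludes this; in the alternative trivial case where \eqref{non-trivial-target} fails, $\adjtar^\star = 0$ yields $p^\star \equiv 0$, and a direct computation of $h(0) = 0$ shows that formula \eqref{shape-2} returns $u^\star = 0$, consistent with $u = 0$ being the unique primal optimum (its cost vanishes and it already reaches the target ball).

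Assembling these observations collapses $\partial H(p^\star)$ to the singleton $\{\chi_{\{p^\star > h(p^\star)\}}\}$, so that $u^\star = H(p^\star)\,\chi_{\{p^\star > h(p^\star)\}}$. The prefactor is read off from the bathtub formula in Lemma \ref{relaxed_bathtub} as $H(p^\star) = \int_0^T \int_{\{p^\star(t,\cdot) > h(p^\star(t,\cdot))\}} p^\star(t,x)\,dx\,dt$, which is exactly the constant amplitude $\overline{M}$ of \eqref{shape-2}. The principal obstacle is the combined analyticity-plus-\eqref{main_property} argument that excludes positive-measure level sets of $p^\star(t,\cdot)$; the surrounding steps are a routine mechanical use of the convex analytic identities assembled in Section \ref{sec-choosing-cost}.
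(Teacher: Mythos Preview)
Your proposal is correct and follows essentially the same route as the paper's proof: invoke $u^\star\in\partial\cost^\ast(L_T^\ast\adjtar^\star)$ via strong duality and Proposition~\ref{prop-coercive}, write $\cost^\ast=\tfrac12 H^2$ and apply the chain rule together with Lemma~\ref{lem-integral} to reduce to $u^\star(t,\cdot)\in H(p^\star)\,\partial\sigma_{\overline{\U}_L}(p^\star(t,\cdot))$, then use analytic-hypoellipticity and~\eqref{main_property} to ensure $p^\star(t,\cdot)$ has zero-measure level sets and collapse the bathtub subdifferential to a singleton. Your treatment of the trivial case and your slightly more explicit choice $\phi(s)=\tfrac12\max(s,0)^2$ (ensuring the outer function is nondecreasing for the composition rule) are minor cosmetic variations on the paper's argument.
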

\begin{proof}

Let $u^\star$ be an optimal control. 
Thanks to Proposition \ref{prop-coercive}, we know that $\dualfct$ defined by~\eqref{ocp-dual} attains its minimum.
Thanks to Lemma \ref{lem-contact-condition}, we can apply the first identity of \eqref{dual-to-primal-variable} in Proposition \ref{saddle-point-stuff} (see Appendix~\ref{app-subsec-fenchel}) to obtain~$u^\star \in \partial \cost^\ast(L_T^\ast \adjtar^\star)$, where $\adjtar^\star$ is any minimiser of $\dualfct$, \ie an optimal dual variable.  

We denote $p^\star$ the solution of the adjoint equation~\eqref{backward} such that $p^\star(T)=\adjtar^\star$. From Lemma~\ref{lem-non-zero-dual-optimal},  
Using again the notation $H(p):=\textstyle \int_0^T\sigma_{\overline{\U}_L}(p(t)) \,  dt$, so that $\cost^\ast=\textstyle \frac12 H^2$,
we have $H(p^\star)\geq 0$ and $\operatorname{dom}(H)=L^2(\Omega)$.

Then, applying the generalised chain rule (see \cite[Theorem 2.3.9, point (ii)]{clarke1990optimization}) with the functions $x\mapsto \textstyle \frac{1}{2}x^2$ and $H$, we compute the subdifferential of the convex functional~$\cost^\ast$: 
$u^\star   \in  H(p^\star) \,\partial   H(p^\star).
$
Applying Lemma~\ref{lem-integral} to $H$, we find $u^\star(t, \cdot) \in \overline{M} \partial \sigma_{\overline{\U}_L}(p^\star(t,\cdot))$ for a.e. $t\in (0,T)$, with $\overline{M} := H(p^\star)$.

Let us first assume that Assumption~\eqref{non-trivial-target} holds. From Lemma~\eqref{lem-non-zero-dual-optimal}, we have $\adjtar^\star \neq 0$. We now let $t \in (0,T)$ be fixed and let us justify that all level sets of $p^\star(t,\cdot)$ are of measure zero, \ie
\[|\{p^\star(t, \cdot)=\lambda\}|=0, \quad \forall \lambda\in \R,\]
Indeed, since the operator $\partial_t -A^\ast$ is analytic-hypoelliptic, we know that $p^\star(t, \cdot)$ is analytic on $\Omega$. Hence, its level sets are of measure zero unless $p^\star(t, \cdot) = S_{T-t}^\ast \, \adjtar^\star$ is constant. Using the~\eqref{main_property} property, this leads to $\adjtar^\star=0$, contradicting~\eqref{non-trivial-target}.

Applying Propositions \ref{relaxed_bathtub} and \ref{prop-bathtub-subdiff}, and recalling that $\partial \sigma_{\overline{\U}_L}(p^\star(t, \cdot))=\{\chi_{\{p^\star(t,\cdot)>h(p^\star(t, \cdot))\}}\}$,
we obtain the result.

Now assume that Assumption~\eqref{non-trivial-target} does not hold. Then $\adjtar^\star = 0$ is optimal and, using the above notations for this specific dual optimal variable, we have $p^\star = 0$, $\overline{M} = 0$ hence any optimal control satisfies $u^\star = 0$, which is of the form~\eqref{shape-2}.
\end{proof}

\begin{rmrk}
As evidenced by the proof, a weaker (but less workable) property than analytic-hypoellipticity is sufficient to infer that optimal controls are on-off shape controls. Indeed, it suffices to require either one of the following conditions (in decreasing order of strength):
\begin{enumerate}[label=(\roman*)]
    \item All solutions $t \mapsto p(t)$ of the adjoint equation such that $p(T) \neq 0$ have zero-measure level sets.
    \item For all solutions $t \mapsto p(t)$ of the adjoint equation such that $p(T) \neq 0$, the level sets $\{p(t, \cdot)=h(p(t, \cdot))\}$ (see~\eqref{level-set-function} for the definition of $h(p)$) have measure $0$.
    \item For all solutions $t \mapsto p(t)$ of the adjoint equation such that $p(T) \neq 0$,
\[\left\{\begin{aligned}
&|\{p(t, \cdot)=h(p(t, \cdot))\}|=\mL - |\{p(t, \cdot)>h(p(t, \cdot))\}| , \quad &\text{if} \  h(p(t,\cdot))\neq 0 \\
&|\{p(t, \cdot)=h(p(t, \cdot))\}|=0, \quad &\text{if} \  h(p(t, \cdot))=0
\end{aligned}
 \quad \fae t\in [0,T].\right.\]
\end{enumerate}  
Note that requirement $(iii)$ is minimal (see Lemma \ref{lmm-app-bathtub-relaxed} and Remark \ref{app-rmrk-bathtub-uniqueness}).

Finally, an even weaker requirement would be to restrict any of the above $(i)$, $(ii)$ or $(iii)$ to a single solution $t\mapsto p^\star(t)$ of the adjoint equation, namely that with $p^\star(T) =\adjtar^\star$ where  $\adjtar^\star$ is the unique dual optimal variable (see below for the uniqueness of optimal variables).
\end{rmrk}

\subsection{Uniqueness}
\label{subsec-uniqueness}
Our first uniqueness statement below (\ie that of the dual optimal variable) is a consequence of Fenchel-Rockafellar duality, and the fact that we work with a Hilbert space, rather than specific properties of the evolution equation under consideration.
\begin{rmrk}
\label{rmrk_bndr}
Still applying Proposition \ref{saddle-point-stuff}, we get 
\begin{equation*}
    L_T u^\star \in \partial G_{T,\e}^\ast(-\adjtar^\star)=\partial \sigma_{\overline{B}(\tar - S_T y_0, \e)}(-\adjtar^\star).
\end{equation*}
Using the Legendre-Fenchel identity \eqref{flip-subdiff}, we get $-\adjtar^\star \in \partial \delta_{\overline{B}(\tar - S_T y_0, \e)} (L_T u^\star)$.
Thanks to Proposition~\ref{prop-indicator-subdiff}, this means that $L_T u^\star$ lies at the boundary of the closed ball $\overline{B}(\tar - S_T y_0, \e)$.
\end{rmrk}

\begin{prpstn}\label{prop-uniqueness-p-u}
Under the assumptions of Theorem \ref{thm-controllability}, the primal-dual optimal pairs $(u^\star, \adjtar^\star)$ are unique.  
\end{prpstn}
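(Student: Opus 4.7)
The plan is to first reduce uniqueness of $u^\star$ to uniqueness of the dual optimiser $\adjtar^\star$, then to establish the latter through a convex-analytic rigidity argument that exploits the nonnegativity condition $\tar \geq S_T y_0$ built into the definition of nonnegative approximate controllability.

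Once $\adjtar^\star$ is known to be unique, uniqueness of $u^\star$ follows at once from the explicit formula \eqref{shape-2} of Proposition \ref{prop-second-cost}: the associated adjoint state $p^\star = L_T^\ast \adjtar^\star$ is uniquely determined; when $\adjtar^\star \neq 0$, the analytic-hypoellipticity of $\partial_t - A^\ast$ combined with the \eqref{main_property} property forces (exactly as in the proof of Proposition \ref{prop-second-cost}) every level set of $p^\star(t,\cdot)$ to have measure zero, so the bathtub principle isolates the unique characteristic function appearing in \eqref{shape-2}; when $\adjtar^\star = 0$, \eqref{shape-2} trivially returns $u^\star = 0$.

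For the uniqueness of $\adjtar^\star$, the strategy is to decompose $\dualfct = A + B + C$ where $A(\adjtar) := \frac{1}{2}\left(\int_0^T \sigma_{\overline{\U}_L}(L_T^\ast \adjtar(t))\,dt\right)^2$ is convex and positively $2$-homogeneous, $B(\adjtar) := -\langle \tar - S_T y_0, \adjtar\rangle_{L^2}$ is linear, and $C(\adjtar) := \e\|\adjtar\|_{L^2}$ is convex. If two distinct minimisers $\adjtar_1^\star, \adjtar_2^\star$ existed, $\dualfct$ would be constant on the segment joining them, and since $B$ is affine on that segment, both convex functions $A$ and $C$ would have to be affine there. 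The main obstacle is precisely the lack of strict convexity of $A$ and of $C$, which will be overcome via two rigidity observations: equality in the triangle inequality for the $L^2(\Omega)$-norm forces $\adjtar_1^\star$ and $\adjtar_2^\star$ to be positively collinear, so the segment lies on a ray through the origin; on such a ray, positive $2$-homogeneity of $A$ turns its restriction into a nonnegative multiple of $r \mapsto r^2$, whose affinity forces the multiple to be $0$.

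To close the argument, Lemma \ref{continuation} applied at an endpoint where $A$ vanishes yields $\adjtar_i^\star \leq 0$; the nonnegativity condition $\tar \geq S_T y_0$ then gives $-\langle \tar - S_T y_0, \adjtar_i^\star\rangle_{L^2} \geq 0$, so that $\dualfct(\adjtar_i^\star) \geq \e\|\adjtar_i^\star\|_{L^2}$, which is strictly positive unless $\adjtar_i^\star = 0$. Comparing with $\dualfct(0) = 0$ then contradicts the optimality of $\adjtar_i^\star$ unless $\adjtar_i^\star = 0$; but in that case the positive collinearity propagates the vanishing to the other minimiser, contradicting $\adjtar_1^\star \neq \adjtar_2^\star$ and concluding the proof.
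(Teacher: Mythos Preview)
Your argument is correct and takes a genuinely different route from the paper's. The paper first passes through the primal side: using Remark~\ref{rmrk_bndr} it shows that $\{L_T u^\star : u^\star \text{ optimal}\}$ lies in the sphere $S(\tar - S_T y_0,\e)$, and by strict convexity of Hilbert balls this convex set is a singleton $\{y^\star\}$. The KKT relation $\adjtar^\star \in -\partial\delta_{\overline{B}(\tar - S_T y_0,\e)}(y^\star)$ then pins $\adjtar^\star$ to a single half-line, and the restriction of $\dualfct$ to that half-line is an explicit quadratic $a\lambda^2+b\lambda$ whose unique minimum is identified. By contrast, you stay entirely on the dual side: assuming two distinct minimisers, you split $\dualfct$ into a $2$-homogeneous convex part, a linear part, and the norm, force each convex piece to be affine on the segment, extract positive collinearity from the equality case of the norm, and then let $2$-homogeneity kill the first term; Lemma~\ref{continuation} together with $\tar\geq S_T y_0$ finishes via $\dualfct(\adjtar_i^\star)\geq \e\|\adjtar_i^\star\|_{L^2}\geq 0=\dualfct(0)$. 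Your route is more self-contained and avoids invoking the primal geometry altogether; the paper's route, on the other hand, yields as a by-product the explicit representation $\adjtar^\star=\lambda^\star(\tar - S_T y_0 - y^\star)$ with $\lambda^\star=-b/2a$, which is additional structural information your argument does not directly produce.
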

\begin{proof}

\textbf{Uniqueness of the dual optimal variable.}
First note that if Assumption \eqref{non-trivial-target} does not hold, then $0$ is the unique optimal control, \ie 
\begin{equation}\label{unique-target-trivial}\{L_T u^\star,\, u^\star\text{ is optimal}\}=\{0\}.\end{equation}

On the other hand, if Assumption \eqref{non-trivial-target} holds, according to Remark~\ref{rmrk_bndr}, and since the set of minimisers of a convex function is convex, the set $\{L_T u^\star, \, u^\star \text{ is optimal}\}$ is a convex subset of the sphere $S(\tar - S_T y_0, \e)$. The closed ball being strictly convex since we are working in the Hilbert space $L^2(\Omega)$, there exists some $y^\star \in \overline{B}(\tar - S_T y_0, \e)$ with $\|y^\star - (\tar - S_T y_0)\|_{L^2} = \e$ such that \begin{equation}\label{unique-target}
\{L_T u^\star, \, u^\star \text{ is optimal}\} = \{y^\star\}.\end{equation}
Thus, in any case, the set of targets reached by optimal controls is always reduced to a single point.

Now, let $\adjtar^\star$ be a dual optimal variable, and $u^\star$ an optimal control. Then, as strong duality holds, Proposition \ref{saddle-point-stuff-weak} implies that the pair $(u^\star, \adjtar^\star)$ satisfies the two optimality conditions from \eqref{unique-target}. We then have
    \begin{equation}
    \label{p-T-charac-unique}
    \adjtar^\star \in -\partial G_{T,\e}(L_T u^\star) = - \partial \delta_{\overline{B}(\tar - S_T y_0, \e)}(L_T u^\star).\end{equation}
    
If Assumption \eqref{non-trivial-target} does not hold, then \eqref{p-T-charac-unique} and \eqref{unique-target-trivial} imply $\adjtar^\star \in -\partial\delta_{\overline{B}(\tar - S_T y_0, \e)}(0)$.
If $\|\tar - S_T y_0\|_{L^2}<\e$, then $0\in B(\tar - S_T y_0, \e)$ and 
\begin{equation}\label{non-H-case-1}\adjtar^\star \in -\partial\delta_{\overline{B}(\tar - S_T y_0, \e)}(0)=\{0\}.\end{equation}
Otherwise, $0\in \partial B (\tar - S_T y_0, \e)$ and \eqref{subdiff-indic} yield
\[\adjtar^\star \in\left\{\lambda \frac{\tar - S_T y_0}{\e}, \lambda \geq 0\right\} = \{\lambda (\tar - S_T y_0), \lambda \geq 0\}.\]
Restricting the function $\dualfct$ defining the dual problem~\eqref{ocp-dual} to the above half-line, using the homogeneities of each of its terms, and the fact that $\|\tar - S_T y_0\|_{L^2}=\e$, we get
\begin{equation}\label{non-H-case-2}\gamma_0(\lambda):=\dualfct(\lambda (\tar - S_T y_0))=a_0 \lambda^2, \quad \lambda\geq 0.\end{equation}
It is clear that $0$ is the unique minimiser of $\gamma_0$.
From \eqref{non-H-case-1} and \eqref{non-H-case-2}, $0$ is the unique dual optimal variable if \eqref{non-trivial-target} does not hold.

If Assumption \eqref{non-trivial-target} holds, then \eqref{p-T-charac-unique} and \eqref{unique-target} imply
\[\adjtar^\star\in - \partial \delta_{\overline{B}(\tar - S_T y_0, \e)}(y^\star) = -\partial \delta_{\overline{B}(0,1)}\left(\frac{y^\star-(\tar - S_T y_0)}{\e}\right).\]
Since $y^\star$ lies at the boundary of $\overline{B}(\tar - S_T y_0, \e)$, formula~\eqref{subdiff-indic} yields
\[\adjtar^\star \in\left\{\lambda \left(\frac{\tar - S_T y_0-y^\star}{\e}\right), \lambda \geq 0\right\} = \{\lambda \left(\tar - S_T y_0 - y^\star\right), \lambda \geq 0\}.\]
Restricting $\dualfct$ to the above half-line as previously, we find
\[\gamma(\lambda):=\dualfct(\lambda (\tar - S_T y_0-y^\star)) = a \lambda ^2 + b \lambda, \quad \lambda \geq 0,\]
where, using $\|\tar - S_T y_0 - y^\star\|_{L^2} = \e$ and the homogeneities involved $
a= \cost^\ast(L_T^\ast (\tar - S_T y_0-y^\star))$ and $b=-\langle \tar - S_T y_0, \tar - S_T y_0-y^\star  \rangle_{L^2} + \e^2$.
By coercivity, $a >0$, and given Lemma \ref{lem-non-zero-dual-optimal}, we have $b<0$. 

Thus, $\gamma$ has a unique minimiser $\lambda^\star:=-b/2a>0$. Hence, $\adjtar^\star=\lambda^\star(\tar - S_T y_0-y^\star)$,
and the dual optimal variable is unique.

\paragraph{Uniqueness of the  optimal control.}

If Assumption \eqref{non-trivial-target} does not hold, then~$0$ is the unique optimal control.

Now, suppose that Assumption~\eqref{non-trivial-target} holds. We know from the proof of Proposition \ref{prop-second-cost} that a given dual optimal variable uniquely determines one optimal control. Moreover, as we have proved that strong duality holds, we can apply Proposition \ref{saddle-point-stuff-weak}: for any pair of primal and dual optimal variables, the relations \eqref{saddle-x-identities} are satisfied. That is, any optimal control $u^\star$ is uniquely determined by the unique dual optimal variable $\adjtar^\star$ through the identity $u^\star \in \partial \cost^\ast(L_T^\ast \adjtar^\star)$.
\end{proof}

\section{Obstructions to controllability}
\label{sec-obstructions}
We here prove Theorem~\ref{informal-obstruction}, through the more general result below in the case of second-order uniformly elliptic operators of the form~\eqref{def_elliptic}. We use the notation $A \subset \subset B$ to mean that there exists a compact set $K$ such that $A \subset K \subset B$.
\begin{thrm}
\label{thm-obstruction}
Let $\mathcal  U_+ \subset L^2(\Omega)$ be a constraint set of nonnegative controls. Assume
that there exists a ball $B(x, r) \subset  \Omega$ such that \[\forall u \in \U_+, \quad \mathrm{supp}(u) \cap B(x, r) = \emptyset.\] Let $A$ be a second-order uniformly elliptic operator of the form~\eqref{def_elliptic}.
Let $y_0 = 0$ and $\tar \in L^2(\Omega)$ be any target such that $\tar \geq S_T y_0 = 0$, $\tar \neq 0$ and $\mathrm{supp}(y_f) \subset B(x, r)$. 
Then there exist $T^\star>0$ and $\e>0$ such that for any time $T\leq T^\star$, no control with values in $\U_+$ can steer $0$ to $\overline{B}(\tar, \e)$.
\end{thrm}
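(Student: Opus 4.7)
The plan is to derive a contradiction by testing the state equation against a non-negative function supported in $B(x,r)$ and exploiting two-sided Gaussian heat kernel bounds for $A^\ast$. The heuristic is that, in very small time, a non-negative control acting outside $B(x,r)$ cannot transport enough mass into $B(x,r)$ to approximate a target concentrated inside it. Since $\mathrm{supp}(\tar)$ is a bounded closed (hence compact) subset of the open ball $B(x,r)$, it stands at a positive distance $\delta>0$ from $\Omega\setminus B(x,r)$. I would then choose a non-negative $\phi\in L^\infty(\Omega)$ supported at distance $\ge\delta/2$ from $\Omega\setminus B(x,r)$ with $\langle\tar,\phi\rangle_{L^2}>0$ (any bounded cutoff of $\tar$ works), and use $y_0=0$ together with $\mathrm{supp}(u(t))\cap B(x,r)=\emptyset$ in Duhamel's formula to obtain
\begin{equation*}
\langle y(T),\phi\rangle_{L^2}=\int_0^T\int_{\Omega\setminus B(x,r)} u(t,z)(S_{T-t}^\ast\phi)(z)\,dz\,dt,
\end{equation*}
and, with $q(s,z):=(S_s^\ast\chi_\Omega)(z)$,
\begin{equation*}
\int_\Omega y(T,z)\,dz=\int_0^T\int_{\Omega\setminus B(x,r)} u(t,z)\,q(T-t,z)\,dz\,dt.
\end{equation*}

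The central step is then to compare the two integrands: on a $C^2$ bounded domain and for second-order uniformly elliptic $A^\ast$ with bounded coefficients, Aronson's upper bound refined by a boundary factor $1\wedge d(\cdot,\partial\Omega)/\sqrt{s}$, together with a matching lower bound for $q$ (Q.\,S.\,Zhang's inequality), yields
\begin{equation*}
(S_s^\ast\phi)(z)\le \rho(T)\,q(s,z)\qquad\text{for all }z\in\Omega\setminus B(x,r),\ s\in(0,T],
\end{equation*}
with $\rho(T)\to 0$ super-polynomially as $T\to 0^+$. The common boundary decay of order $d(z,\partial\Omega)/\sqrt s$ cancels in the quotient, which is thus controlled by the exponential Gaussian smallness $s^{-d/2}e^{-c\delta^2/s}$ coming from the distance $\ge\delta/2$ between $\mathrm{supp}(\phi)$ and the admissible control region. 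Injecting this into the first display and using $y(T)\ge 0$ and $\|y(T)-\tar\|_{L^2}\le\e$ gives
\begin{equation*}
\langle\tar,\phi\rangle_{L^2}-\e\|\phi\|_{L^2}\;\le\;\langle y(T),\phi\rangle_{L^2}\;\le\;\rho(T)\int_\Omega y(T)\,dz\;\le\;\rho(T)\,|\Omega|^{1/2}(\|\tar\|_{L^2}+\e).
\end{equation*}

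Picking first $\e^\star<\langle\tar,\phi\rangle_{L^2}/(2\|\phi\|_{L^2})$ and then $T^\star>0$ so small that $\rho(T^\star)|\Omega|^{1/2}(\|\tar\|_{L^2}+\e^\star)<\langle\tar,\phi\rangle_{L^2}/2$ delivers the announced obstruction for all $T\le T^\star$. The main technical ingredient, and the expected obstacle in writing this up rigorously, is the two-sided Gaussian bound with matching boundary decay: for second-order uniformly elliptic operators on $C^2$ bounded domains with bounded coefficients such bounds are classical after Aronson, Davies, and Zhang, but one has to track the exponents carefully to secure the super-polynomial smallness of $\rho(T)$.
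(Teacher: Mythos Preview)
Your argument is essentially correct and follows a genuinely different route from the paper. The paper does not use heat kernel estimates at all: instead it constructs a single sign-changing test function $p_f=\xi\varphi_1$, where $\varphi_1$ is the first Dirichlet eigenfunction and $\xi$ is a smooth cutoff equal to $-1$ on $K=\mathrm{supp}(\tar)$ and $+1$ on $\Omega\setminus B(x,r)$. By parabolic regularity up to the boundary (the key point being $\partial_\nu p_f<0$ on $\partial\Omega$), the backward adjoint trajectory $p(t)$ with $p(T)=p_f$ remains nonnegative on $\Omega\setminus B(x,r)$ for $T$ small, so that the duality identity $\langle y(T),p_f\rangle=\int_0^T\langle p(t),u(t)\rangle\,dt\ge 0$ immediately contradicts $\langle y(T),p_f\rangle\approx\langle\tar,p_f\rangle<0$.

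Your approach trades this qualitative sign argument for a quantitative one: rather than forcing $p(t)\ge 0$ on the control region, you compare two nonnegative adjoint trajectories ($S_s^\ast\phi$ versus $S_s^\ast\chi_\Omega$) and show their ratio is uniformly small on $\Omega\setminus B(x,r)$ via two-sided Gaussian bounds with matching boundary decay. This has the advantage of being explicitly quantitative (you get a rate $\rho(T)\sim T^{-d/2}e^{-c\delta^2/T}$ rather than an abstract $T^\star$ from a continuity argument), but at the cost of importing heavier machinery. The paper's proof needs only standard parabolic regularity for the adjoint flow and works transparently in the full generality of~\eqref{def_elliptic}; your argument requires the boundary-refined Aronson--Davies--Zhang bounds for a non-self-adjoint operator with drift on a $C^2$ domain, which---as you correctly flag---are available in the literature but demand some care to cite in the right form and to verify that the boundary factors indeed cancel uniformly in $z$ and $s$.
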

The proof relies on the following lemma, inspired by~\cite{Pighin2018}.
\begin{lmm}
\label{nice-adjoint}
Let $B(x, r) \subset \subset \Omega$. Under the assumptions of Theorem~\ref{thm-obstruction}, for any $K \subset B(x, r)$ compact, there exists $\adjtar \in L^2(\Omega)$ and $T^\star>0$ such that
\begin{itemize}
    \item[(i)] $\adjtar <0$ on $K$,
    \item[(ii)] for all $T \leq T^\star$, the solution of~\eqref{backward} with $p(T)=\adjtar$ satisfies $p(t, \cdot) \geq 0$ on $\Omega \setminus B(x, r)$ for all $0 \leq t \leq T$.
\end{itemize}
\end{lmm}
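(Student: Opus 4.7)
The plan is to construct $p_f$ explicitly as $p_f := \phi_1 - M\psi$, where $\phi_1$ denotes the first Dirichlet eigenfunction of $A^\ast$ (normalised so that $\phi_1>0$ throughout $\Omega$, with eigenvalue $\lambda_1>0$), $\psi \in C^\infty_c(\Omega)$ is a cutoff with $\psi\equiv 1$ on $K$ and $\operatorname{supp}(\psi) \subset B(x,\rho)$ for some $\rho<r$ chosen so that $K \subset B(x,\rho) \subset\subset B(x,r)$, and $M > \|\phi_1\|_{L^\infty(\Omega)}$. On $K$ one then has $p_f \leq \phi_1 - M < 0$, which is exactly (i). On $\Omega \setminus B(x,r)$, $\psi$ vanishes, so $p_f = \phi_1 \geq 0$ there, providing a non-negative ``initial'' datum outside the ball.

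To prove (ii), I pass to the time-reversed function $q(s,\cdot) := p(T-s,\cdot) = S^\ast_s p_f$, which satisfies the forward parabolic equation $\partial_s q - A^\ast q = 0$ with $q(0,\cdot) = p_f$. By linearity and the eigenfunction identity $S^\ast_s \phi_1 = e^{-\lambda_1 s}\phi_1$,
\[q(s,\cdot) = e^{-\lambda_1 s}\phi_1 - M\, S^\ast_s \psi.\]
I apply the parabolic weak maximum principle (after the standard substitution $\tilde q = e^{-Cs}q$ to absorb any positive zeroth-order contribution from $A^\ast$) on the cylinder $\bigl(\Omega \setminus \overline{B(x,r)}\bigr) \times (0,T^\star]$. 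Its parabolic boundary splits into three pieces: (a) $\{s=0\}$, on which $q(0,\cdot) = \phi_1 \geq 0$ since $\psi$ vanishes on $\Omega \setminus B(x,r)$; (b) $\partial\Omega \times [0,T^\star]$, on which both $\phi_1$ and $S^\ast_s\psi$ vanish by the Dirichlet condition; and (c) $\partial B(x,r) \times [0,T^\star]$, which is the delicate piece.

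On $\partial B(x,r)$, strict positivity of $\phi_1$ together with compactness gives $\phi_1 \geq c_\ast := \min_{\partial B(x,r)} \phi_1 > 0$. To control $S^\ast_s \psi$ uniformly there, I invoke the classical Gaussian upper bound on the heat kernel of $A^\ast$, valid for uniformly elliptic operators of the form~\eqref{def_elliptic} on $C^2$ domains: for every $y \in \partial B(x,r)$ and every $s>0$,
\[S^\ast_s \psi(y) \leq C\, s^{-d/2} e^{-c(r-\rho)^2/s}\, \|\psi\|_{L^1(\Omega)},\]
since $\operatorname{dist}(y,\operatorname{supp}\psi) \geq r-\rho > 0$. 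The right-hand side tends to $0$ as $s \to 0^+$, so choosing $T^\star$ small enough (depending on $M$, $c_\ast$, $\lambda_1$, $\rho$, $r$) yields $M\, S^\ast_s\psi(y) \leq e^{-\lambda_1 s}c_\ast \leq e^{-\lambda_1 s}\phi_1(y)$ uniformly on $\partial B(x,r) \times [0,T^\star]$, so $q \geq 0$ there. The weak maximum principle then delivers $q \geq 0$ on $(\Omega \setminus B(x,r)) \times [0,T^\star]$, and reverting to $p$ via $p(t,\cdot) = q(T-t,\cdot)$ establishes (ii) for every $T \leq T^\star$.

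The main obstacle is precisely the short-time boundary estimate on $\partial B(x,r)$: one must quantify that the mass released by $\psi$ cannot cross the protective annulus $B(x,r)\setminus B(x,\rho)$ fast enough to overwhelm the positivity of $\phi_1$ on $\partial B(x,r)$. This is exactly what the Gaussian heat-kernel upper bound provides, and it is the only nontrivial PDE ingredient I need to import; it is classical for elliptic operators of the form~\eqref{def_elliptic} on $C^2$ domains, but is not derived elsewhere in the paper.
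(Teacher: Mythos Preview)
Your argument is correct and constitutes a genuinely different proof from the paper's. One small caveat: you assert $\lambda_1>0$ for the principal eigenvalue of $-A^\ast$, which is not guaranteed for operators of the form~\eqref{def_elliptic} without a sign assumption on the zeroth-order coefficient. This is harmless, however, since you only use $e^{-\lambda_1 s}\to 1$ as $s\to 0^+$; the sign of $\lambda_1$ plays no role.

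The paper proceeds differently. It takes $p_f=\xi\varphi_1$, where $\varphi_1$ is the first eigenfunction of the Dirichlet \emph{Laplacian} (not of $A^\ast$) and $\xi\in C^\infty(\Omega)$ equals $1$ on $\Omega\setminus B(x,r)$ and $-1$ on $K$. The key property retained is $\partial_\nu p_f<0$ on $\partial\Omega$. The paper then invokes parabolic regularity to get $q\in C([0,\infty)\times\overline{\Omega})$ and $\partial_\nu q\in C([0,\infty)\times\partial\Omega)$; continuity of $\partial_\nu q$ yields $\partial_\nu q<0$ on $[0,T^\star]\times\partial\Omega$, hence $q\geq 0$ on a collar $\Omega\setminus K_1$, and continuity of $q$ handles the compact remainder $K_1\setminus B(x,r)$. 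No maximum principle or heat-kernel bound is used.

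The trade-off is clear. The paper's route is softer: it only needs continuity of $q$ and $\partial_\nu q$ up to the parabolic boundary, at the cost of citing a boundary-regularity theorem. Your route is more explicit about the PDE mechanism (mass from $\psi$ cannot cross the protective annulus in short time) and avoids boundary regularity of $\partial_\nu q$, but in exchange imports Aronson-type Gaussian upper bounds for the Dirichlet heat kernel of $A^\ast$. Both are standard ingredients; which one counts as ``cheaper'' is a matter of taste.
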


\begin{proof}
Let us build $\adjtar$ such that for all $1<r<+\infty$, $\adjtar \in W^{2,r}(\Omega) \cap W^{1,r}_0(\Omega)$, with $\adjtar<0$ on $K$, $\adjtar >0$ on $\Omega \setminus B(x, r)$, $\adjtar= 0$ on $\partial \Omega$, and $\partial_\nu \adjtar <0$ on $\partial \Omega$. 

To that end, we denote $\varphi_1$ the first eigenfunction of the Dirichlet Laplacian on $\Omega$, which satisfies $\varphi_1>0$ on $\Omega$ and $\partial_\nu \varphi_1<0$ on $\partial \Omega$ and since $\Omega$ is of class $C^2$, $\varphi_1 \in W^{2,r}(\Omega) \cap W^{1,r}_0(\Omega)$ for all $1<r<+\infty$~\cite{Brezis2011}[Theorem 9.32]. We then set $\adjtar = \xi \varphi_1$ where $\xi \in C^\infty(\Omega)$ is chosen to satisfy $\xi = 1$ on $\Omega \setminus B(x,r)$ and $\xi = -1$ on $K$. The function $\adjtar$ satisfies all the required properties (note that $\adjtar = \varphi_1$ locally around $\partial \Omega$ since $B(x,r)\subset \subset \Omega$, hence $\partial_\nu \adjtar =\partial_\nu \varphi_1 < 0$ on $\partial \Omega$).

Let $q$ solve the (forward) adjoint equation~\eqref{backward} for $t\geq 0$, with $q(0) =\adjtar$.
Then, by parabolic regularity, we both have $q \in C([0,+\infty)\times \overline \Omega)$ and $\partial_\nu q \in  C([0,+\infty)\times \partial \Omega)$~\cite{Pighin2018}[Theorem 8.1]. As a result, by continuity there exists $T^\star$ such that $\partial_\nu q<0$ over $[0,T^\star]\times \partial \Omega$, hence there exists some compact set~$K_1$ containing $B(x, r)$ such that $q \geq 0$ on $[0,T^\star]\times (\Omega \setminus K_1)$. 
Then, upon reducing $T^\star$ if necessary and by continuity again, we have $q \geq 0$ over $[0,T^\star] \times (K_1 \setminus B(x, r))$.

To conclude the proof, we fix any $T\leq T^\star$ and let $p$ be the solution of~\eqref{backward} on $[0,T]$ with $p(T)=\adjtar$. Then for all $0 \leq t \leq T$, $p(t) = q(T-t)$, hence $p(t, \cdot) \geq 0$ on $\Omega \setminus B(x, r)$ for all $0 \leq t \leq T$. 
\end{proof}

\begin{proof}[Proof of Theorem~\ref{thm-obstruction}]
Upon reducing $r$, we may without loss of generality assume that $B(x, r) \subset \subset \Omega$. Letting $K := \mathrm{supp}(\tar)$, we consider $\adjtar$ and $T^\star$ as given by Lemma~\ref{nice-adjoint}.

Let $T\leq T^\star$ be fixed. For any control $u \in E$, any $y_0,\tar \in L^2(\Omega)$, any solution to the adjoint equation~\eqref{backward} such that $p(T)= \adjtar$, we have $\textstyle \frac{d}{dt} \langle y(t), p(t)\rangle_{L^2} = \langle p(t), u(t)\rangle_{L^2}$.
As a result and owing to $y_0 = 0$,
\begin{equation}
\label{conflicting-inequality}
\langle y(T), p_f\rangle_{L^2} = \int_0^T \langle p(t), u(t)\rangle_{L^2}\, dt.\end{equation}

We now assume by contradiction that, for any $\e>0$ there exists a nonnegative control $u_\e \in E$ satisfying $\forall t \in (0,T), \, \mathrm{supp}(u_\e(t)) \cap B(x, r) = \emptyset$ and steering $y_0=0$ to the ball $\overline{B}(\tar, \e)$ in time $T$. We inspect the sign of the equality~\eqref{conflicting-inequality}
along the controls $u_\e$, $\e>0$.

On the one hand, because of the condition (ii) in Lemma~\ref{nice-adjoint} satisfied by $p$, and owing to $u_\e \geq 0$, the right-hand side of~\eqref{conflicting-inequality} is nonnegative, \ie \begin{equation}
\label{nonnegative}
\langle y(T), \adjtar \rangle_{L^2} \geq 0.
\end{equation}
On the other hand, the left-hand side of~\eqref{conflicting-inequality} satisfies 
\[\langle y(T), \adjtar \rangle_{L^2} =  \langle \tar, \adjtar \rangle_{L^2} + \langle y(T)-\tar, p_f\rangle_{L^2}  \leq\langle \tar, \adjtar \rangle_{L^2} +\e\|\adjtar\|_{L^2}\]
Now, $\langle \tar, \adjtar \rangle_{L^2} < 0$, because of (i) in Lemma~\ref{nice-adjoint}. As a result, there exists $\alpha>0$  such that $\adjtar \leq -\alpha$ on~$K$, so that 
\[\langle \tar, \adjtar \rangle_{L^2} \leq -\alpha \int_{K} \tar < 0,\]
because $\tar$ is nonnegative and nontrivial on $K$ by assumption.

Hence, for $\e>0$ small enough, $\langle y(T), \adjtar \rangle_{L^2} < 0$, which contradicts~\eqref{nonnegative}.
\end{proof}
\begin{rmrk}
As the proof shows, the obstruction to nonnegative approximate controllability in $\U_+$ does not rely on the comparison principle, but is of dual nature. As evidenced by the proof above, the core idea is indeed to construct $\adjtar$ and $\tar$ such that the equality~\eqref{conflicting-inequality} prevents $y(T)$ from being close to $\tar$.
The proof of Theorem~\ref{thm-obstruction} follows directly from the existence of $\adjtar$ satisfying the assumptions of Lemma~\ref{nice-adjoint}. Hence, this obstruction to nonnegative approximate controllability is rather general and will be satisfied by any operator (including uniformly second-order elliptic operators of the form~\eqref{def_elliptic}) for which such an element $\adjtar$ can be built. 
\end{rmrk}

\section{Further comments}
\label{sec-comments}
\subsection{Properties of the value function in the general case}
For general linear operators generating a $C_0$ semigroup,
fixing $\Omega$, $L$, $\e$, $y_0$ and $\tar$, we analyse the dependence with respect to the final time~$T$, for the optimal control problem \eqref{ocp-primal} studied in Section \ref{sec-controllability} for system \eqref{nonnegative-controllability}.


By Lemma~\ref{lem-contact-condition} and Proposition~\ref{prop-coercive}, the optimal control problem~\eqref{ocp-primal} is well-posed, \ie optimal controls exist (see also Remark~\ref{rmk-general-existence}), hence we may consider 
\begin{equation}
    \label{norm-optimal-control-problem}
\Pi(T):= \frac{1}{2} (\overline{M}(T))^2:=\inf \{\cost(u), \quad u\in E, \quad \|L_T u-(\tar - S_T y_0)\|_{L^2}\leq \e\}, \quad T>0.\end{equation}
When $A^\ast$ satisfies the~\eqref{main_property} property and $\partial_t - A^\ast$  is analytic-hypoelliptic, $\overline{M}(T)$ is the amplitude of the unique optimal control in Proposition~\ref{prop-second-cost}.

Recall that by strong duality, we have
\begin{equation}
\label{duality-M-J}
\Pi(T) =\frac{1}{2} (\overline{M}(T))^2
=-\dualfct(p^\star_{T}),\quad \forall T\geq 0,
\end{equation}
where $p_T^\star$ is the unique minimiser of $\dualfct$. This is exactly the identity obtained for the HUM method where the cost functional $\cost$ is just $\textstyle \frac{1}{2}\|\cdot\|_E^2$. 

We first establish the continuity of $T \mapsto \overline{M}(T)$. 
\begin{prpstn}
 \label{prop-M-continuous}
 $\overline{M}$ (and thus $\Pi$) are continuous on $(0, +\infty)$.
 \end{prpstn}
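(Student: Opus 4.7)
The plan is to prove continuity of $\Pi$, which directly yields continuity of $\overline{M} = \sqrt{2\Pi}$ on $(0,+\infty)$. I will establish lower and upper semi-continuity separately, exploiting both sides of the primal-dual formulation.

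\textbf{Lower semi-continuity via the dual.} By the strong duality established from Lemma~\ref{lem-contact-condition} and Proposition~\ref{prop-coercive} (valid in the general semigroup setting per Remark~\ref{rmk-general-existence}),
\[\Pi(T) \;=\; \sup_{\adjtar \in L^2(\Omega)} \bigl(-\dualfct(\adjtar)\bigr).\]
For each fixed $\adjtar$, I would check that $T \mapsto \dualfct(\adjtar)$ is continuous on $(0,+\infty)$. After the change of variable $s = T - t$,
\[\cost^\ast(L_T^\ast \adjtar) \;=\; \tfrac{1}{2}\left(\int_0^T \sigma_{\overline{\U}_L}(S_s^\ast \adjtar)\, ds\right)^2,\]
whose integrand is $T$-independent and locally bounded in $s$ (via the estimate $\sigma_{\overline{\U}_L}(v) \leq |\Omega|^{1/2}\|v\|_{L^2}$ from the proof of Lemma~\ref{lem-contact-condition}), so moving the upper endpoint in $T$ is continuous. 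The linear term $-\langle \tar - S_T y_0, \adjtar\rangle_{L^2}$ is continuous in $T$ by strong continuity of $(S_t)$, and $\varepsilon \|\adjtar\|_{L^2}$ is $T$-independent. Thus $\Pi$ is a supremum of continuous functions, hence lsc.

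\textbf{Upper semi-continuity via strict feasibility.} Fix $T_0 > 0$ and introduce the auxiliary value function
\[V(\eta) := \inf\left\{\cost(u) \;:\; u \in E,\; \|L_{T_0} u - (\tar - S_{T_0} y_0)\|_{L^2}\leq \eta\right\}.\]
The convexity of $\cost \in \Gamma_0(E)$ and of the ball constraint show that $V$ is convex and non-increasing on $(0,+\infty)$. Applying the coercivity-plus-duality argument for any tolerance $\eta > 0$ gives $V(\eta) < +\infty$. A convex function finite on $(0,+\infty)$ is continuous there, so given $\delta > 0$, one may pick $\eta \in (0,\varepsilon)$ with $V(\varepsilon - \eta) \leq V(\varepsilon) + \delta = \Pi(T_0) + \delta$. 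Let $\tilde u \in E$ realise $\cost(\tilde u) \leq \Pi(T_0) + \delta$ and the \emph{strict} feasibility $\|L_{T_0} \tilde u - (\tar - S_{T_0} y_0)\|_{L^2} \leq \varepsilon - \eta$.

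For $T$ near $T_0$, set $u_T(t) := \tilde u(tT_0/T)$ for $t \in [0, T]$. Since the essential supremum in $t$ and the nonnegativity defining $\cost$ are preserved under bijective time-reparameterisation, $\cost(u_T) = \cost(\tilde u) \leq \Pi(T_0) + \delta$. A change of variables gives
\[L_T u_T \;=\; \frac{T}{T_0} \int_0^{T_0} S_{(T/T_0)(T_0 - s)}\, \tilde u(s)\, ds,\]
which converges to $L_{T_0} \tilde u$ in $L^2(\Omega)$ as $T \to T_0$ by strong continuity of $(S_t)$ applied pointwise in $s$ together with dominated convergence (using the standard bound $\|S_t\|_{\mathrm{op}} \leq M e^{\omega t}$). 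Combined with $S_T y_0 \to S_{T_0} y_0$, this yields $\|L_T u_T - (\tar - S_T y_0)\|_{L^2} \to \|L_{T_0} \tilde u - (\tar - S_{T_0} y_0)\|_{L^2} \leq \varepsilon - \eta$, so $u_T$ is admissible for the time-$T$ problem once $T$ is close enough to $T_0$. Hence $\Pi(T) \leq \cost(u_T) \leq \Pi(T_0) + \delta$, and sending $\delta \to 0$ gives $\limsup_{T \to T_0} \Pi(T) \leq \Pi(T_0)$.

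\textbf{Main obstacle.} The delicate step is upper semi-continuity: a direct use of the optimal $u^\star_{T_0}$, which saturates the tolerance ball (Remark~\ref{rmrk_bndr}), fails because the semigroup perturbation coming from $T \to T_0$ can push $L_T u - (\tar - S_T y_0)$ slightly beyond $\varepsilon$. The convexity-based continuity of $V(\eta)$ in $\eta$ is exactly what lets us trade an arbitrarily small increase in cost for strict feasibility, thereby creating enough slack in the constraint to absorb the perturbation.
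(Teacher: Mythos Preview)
Your proof is correct and takes a genuinely different route from the paper's. The paper works entirely on the dual side: it applies the parametric-optimisation Lemma~\ref{parametric_optimisation} to the map $(\adjtar, T) \mapsto \dualfct(\adjtar)$, and the technical effort goes into establishing \emph{joint} weak-strong lower semicontinuity of this map, by decomposing $F_{T_n}^\ast(L_{T_n}^\ast \adjtar^n) - F_T^\ast(L_T^\ast \adjtar^n)$ and controlling the piece over $[T,T_n]$ via the bound $\sigma_{\overline{\U}_L}(v)\leq |\Omega|^{1/2}\|v\|_{L^2}$. You instead split LSC and USC, using the dual for the former (sup of functions continuous in $T$ after the change of variable $s=T-t$, which is clean) and the primal for the latter (strict feasibility via continuity of the convex value function $\eta\mapsto V(\eta)$, then a time-reparametrisation that leaves $\cost$ invariant because the cost is an essential supremum in $t$). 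Your approach is more elementary in that it avoids the abstract Lemma~\ref{parametric_optimisation} and, notably, does not rely on the uniqueness of the dual minimiser that this lemma requires as a hypothesis; the paper's approach, by contrast, stays on the dual side throughout and reuses the estimates already obtained there. One minor wording issue: when you ``let $\tilde u$ realise $\cost(\tilde u)\leq \Pi(T_0)+\delta$'', you need a small $\delta/2$-of-room since $V(\e-\eta)$ is only an infimum; this is routine but worth making explicit.
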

 
 \begin{proof}
Using \eqref{duality-M-J}, we prove the continuity by showing that $(\adjtar, T) \mapsto \dualfct(\adjtar)$ (given by \eqref{dualfct}) satisfies the assumptions of Lemma~\ref{parametric_optimisation} with $H = L^2(\Omega)$ and $Z = (0,+\infty)$. Clearly, the first, second and fourth assumptions are satisfied, hence we are left with proving that 
$(\adjtar, T) \mapsto \dualfct(\adjtar)$ is weak-strong lower semicontinuous over $L^2(\Omega) \times (0,+\infty)$.
The last two terms of~\eqref{dualfct} are easily seen to be weak-strong lower semicontinuous over $L^2(\Omega) \times (0,+\infty)$, hence we investigate the property for the remaining term $F_T^\ast(L_T^\ast \adjtar)$.

Given $\adjtar \in L^2(\Omega)$ and $T >0$, let $(\adjtar^n)$ and $(T_n)$ be two sequences such that $\adjtar^n \rightharpoonup \adjtar$, $T_n \rightarrow T$.
We decompose 
\begin{align*}
F_{T_n}^\ast(L_{T_n}^\ast \adjtar^n)& = 
F_{T}^\ast(L_{T}^\ast \adjtar^n)+ \left(F_{T_n}^\ast(L_{T_n}^\ast \adjtar^n)-F_{T}^\ast(L_{T}^\ast \adjtar^n)\right).
\end{align*}
By weak (sequential) lower semicontinuity of $F_T^\ast$ over $L^2(0,T;L^2(\Omega))$, we find that the first term  satisfies 
\[F_{T}^\ast(L_{T}^\ast \adjtar) \leq \liminf_{n\rightarrow +\infty} F_{T}^\ast(L_{T}^\ast \adjtar^n).\]
To conclude, we only need to prove that the second term tends to $0$ as $n \rightarrow +\infty$. 

Using the notation $q_n$ for the solution to the forward adjoint problem such that $q_n(0)=\adjtar^n$, \ie $q_n(t) = S_t^\ast \adjtar^n$,
we have
\begin{align*} F_{T_n}^\ast(L_{T_n}^\ast \adjtar^n)-F_{T}^\ast(L_{T}^\ast \adjtar^n) 
& = \frac{1}{2} \left(\int_0^{T_n} \sigma_{\overline{\U}_L}(q_n(T_n-t)) \,dt\right)^2 - \frac{1}{2} \left(\int_0^{T}  \sigma_{\overline{\U}_L}(q_n(T-t))\right)^2 \\
& = \frac{1}{2}\left(\int_T^{T_n} \sigma_{\overline{\U}_L}(q_n(t))\,dt\right) \left(\int_0^{T_n} \sigma_{\overline{\U}_L}(q_n(t)) \,dt + \int_0^{T} \sigma_{\overline{\U}_L}(q_n(t)) \,dt \right)
\end{align*}

Using the bound $0 \leq \sigma_{\overline{\U}_L}(p) \leq|\Omega|^{1/2} \|p\|_{L^2}$ (see the proof of Lemma~\ref{lem-contact-condition}) and the estimate $\|S_t\|_{L(L^2(\Omega))} \leq  C$ valid for all $t \in [0,T+1]$ with $C>0$ some constant independent of $n$, we have
\begin{align*}
\left|\int_0^{T_n} \sigma_{\overline{\U}_L}(q_n(t)) \,dt + \int_0^{T} \sigma_{\overline{\U}_L}(q_n(t)) \,dt\right| \leq C |\Omega|^{1/2} (T+T_n)
 \, \|\adjtar^n\|_{L^2},
\end{align*}
a bounded quantity, and
\begin{align*}
\left|\int_T^{T_n} \sigma_{\overline{\U}_L}(q_n(t)) \, dt\right| \leq C |\Omega|^{1/2} |T-T_n|
 \, \|\adjtar^n\|_{L^2},
\end{align*}
which tends to $0$ as $n\rightarrow +\infty$.
\end{proof}

We now study the behaviour of $\overline{M}(T)$  near $T=0$ and $T=+\infty$. We recall that $\overline{M}(T)$ also depends on all other parameters $y_0$, $\tar$, $\e$ and $L$.

We now recall (see \cite{pazy2012semigroups}) that there exist $C_s>0$, $\alpha \in \R$ such that for all $t \geq 0$, $\|S_t\|_{L(L^2(\Omega))} \leq C_s e^{\alpha t}$,
and the semi-group generated by $(A, D(A))$ is said to be \textit{exponentially stable} if $\alpha<0$.

\begin{prpstn} 
\label{prop-lower-bound}
We have 
\begin{equation}
\label{lower_bound}
\forall T>0, \quad \overline{M}(T) \geq |\alpha| \frac{\|\tar - S_T y_0\|_{L^2} - \e}{\sqrt{\mL} (1-e^{\alpha T})}.
\end{equation}
\end{prpstn}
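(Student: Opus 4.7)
The plan is a direct chain of estimates: bound the target gap by $\|L_T u\|_{L^2}$, bound $\|L_T u\|_{L^2}$ by the cost, and then take the infimum.

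First, for any admissible $u \in E$, \textit{i.e.} satisfying $\|L_T u - (\tar - S_T y_0)\|_{L^2} \leq \e$, the triangle inequality applied to Duhamel's formula gives
$$\|\tar - S_T y_0\|_{L^2} - \e \leq \|L_T u\|_{L^2}.$$
Since $\cost(u) < +\infty$ forces $u \geq 0$, and since $\|u(t,\cdot)\|_{L^2}^2 \leq \|u(t,\cdot)\|_{L^\infty}\|u(t,\cdot)\|_{L^1}$ for nonnegative functions, the definition of $\mathcal{M}$ together with the identity $\cost(u) = \tfrac12 \sup_t \mathcal{M}(u(t,\cdot))^2$ yields the key pointwise bound
$$\|u(t,\cdot)\|_{L^2} \leq \sqrt{2\mL\,\cost(u)} \quad \text{for a.e. } t \in (0,T).$$
This is the step at which the volume bound $L|\Omega|$ enters the final estimate through the factor $\sqrt{\mL}$, exploiting crucially both the $L^\infty$ and the $L^1$ components of $\mathcal{M}$.

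Second, I would apply Minkowski's integral inequality to $L_T u = \int_0^T S_{T-t} u(t,\cdot)\,dt$ together with the growth bound $\|S_t\|_{L(L^2(\Omega))} \leq e^{\alpha t}$ (tacitly with $C_s = 1$, as is natural for the contractive examples motivating the paper; otherwise one simply picks up an extra factor $C_s$ in the denominator). The elementary primitive $\int_0^T e^{\alpha(T-t)}\,dt = (1-e^{\alpha T})/|\alpha|$, valid when $\alpha < 0$, combines with the previous step to produce
$$\|L_T u\|_{L^2} \leq \sqrt{2\mL\,\cost(u)}\,\frac{1 - e^{\alpha T}}{|\alpha|}.$$

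Chaining this with the first inequality, isolating $\sqrt{\cost(u)}$, and passing to the infimum over admissible $u$ (which, by definition of $\Pi(T)$ and the identity $\overline{M}(T) = \sqrt{2\Pi(T)}$, puts $\overline{M}(T)$ on the left) gives exactly the announced bound. The estimate is vacuous whenever $\|\tar - S_T y_0\|_{L^2} \leq \e$ or $\alpha \geq 0$ (the right-hand side is then non-positive), so the content of the statement lies in the exponentially stable regime. There is no serious obstacle in this proof; the one point that must be read carefully off Section~\ref{sec-choosing-cost} is the sharp pointwise $L^2$-bound on $u(t,\cdot)$ dictated by the anisotropic cost, precisely tuned to the volume constraint $\mL$.
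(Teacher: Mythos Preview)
Your proposal is correct and follows essentially the same route as the paper: triangle inequality on the target, the $L^2$--$L^\infty$--$L^1$ interpolation $\|u(t,\cdot)\|_{L^2}^2 \leq \|u(t,\cdot)\|_{L^\infty}\|u(t,\cdot)\|_{L^1}$ to extract the factor $\sqrt{\mL}\,\overline{M}(T)$, and Minkowski plus the semigroup growth bound to integrate in time. The only cosmetic difference is that the paper works directly with the optimal control $u^\star_T$ rather than taking an infimum over all admissible $u$ at the end, and leaves the interpolation step implicit; your version spells it out more fully.
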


\begin{proof}
Let $u^\star_T$ be an optimal control in time $T$ for the optimal control problem \eqref{norm-optimal-control-problem}, then
\begin{equation*}
    {\def\arraystretch{2}\begin{array}{rl}
\|L_T u^\star_T\|_{L^2}& =\left\|\int_0^T S_{T-t} u^\star_T(t, \cdot)dt\right\|_{L^2} 
\leq\int_0^T  \|S_{T-t} u^\star_T(t, \cdot)\|_{L^2} dt \\
& \leq \int_0^T  e^{\alpha(T-t)} \|u^\star_T(t, \cdot)\|_{L^2} dt 
\leq \frac{1}{|\alpha|}(1- e^{\alpha T}) \overline{M}(T) \sqrt{\mL}.
\end{array}}\end{equation*}
Now, by definition of our control problem, for all $T>0$, $\|\tar - S_T y_0\|_{L^2}-\e \leq \|L_T u^\star_T\|_{L^2}$,
and the result follows.
\end{proof}

\begin{crllr}
\label{prop-M(t)-limits}
Assume that $\tar\notin \overline{B}(y_0, \e)$. Then:
\begin{equation}\label{value-asymp-near-0}\frac{1}{T} = \underset{T \rightarrow 0}{O} (\overline{M}(T)).\end{equation}
In particular, $\overline{M}(T)\xrightarrow[T\to 0]{}+\infty$.

Assume that $\tar\notin \overline{B}(0, \e)$. If, additionally, $(S_t)_{t\geq 0}$ is exponentially stable, then
\begin{equation}\label{value-asympt-infty}\liminf_{T \rightarrow +\infty} \overline{M}(T) > 0.\end{equation}

\end{crllr}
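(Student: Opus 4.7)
The plan is to obtain both statements as immediate consequences of the lower bound established in Proposition~\ref{prop-lower-bound}, namely
\[\overline{M}(T) \geq |\alpha| \, \frac{\|\tar - S_T y_0\|_{L^2} - \e}{\sqrt{\mL}\,(1-e^{\alpha T})},\]
combined with elementary continuity/decay properties of the semigroup $(S_t)_{t\geq 0}$. No new machinery is needed: both assertions are asymptotic analyses of the right-hand side above.

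For~\eqref{value-asymp-near-0}, I would begin by observing that strong continuity of $(S_t)_{t \geq 0}$ gives $S_T y_0 \to y_0$ in $L^2(\Omega)$ as $T \to 0$, whence $\|\tar - S_T y_0\|_{L^2} \to \|\tar - y_0\|_{L^2}$. Since $\tar \notin \overline{B}(y_0,\e)$, this limit is strictly greater than $\e$, so the numerator in the lower bound is bounded below by some $c>0$ for $T$ small enough. For the denominator, a Taylor expansion yields $1 - e^{\alpha T} = -\alpha T + O(T^2) = |\alpha| T (1+o(1))$ (the expression $|\alpha|/(1-e^{\alpha T})$ being understood by continuity when $\alpha = 0$, in which case one uses directly the estimate $\int_0^T \|S_{T-t}\|\,dt \leq C_s T$ from the proof of Proposition~\ref{prop-lower-bound}). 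Combining both gives
\[\overline{M}(T) \geq \frac{c}{\sqrt{\mL}\, T}\,(1+o(1)) \qquad \text{as } T \to 0,\]
which is exactly $1/T = O(\overline{M}(T))$ near $0$, and in particular forces $\overline{M}(T) \to +\infty$.

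For~\eqref{value-asympt-infty}, the exponential stability hypothesis means $\alpha<0$, so that $\|S_T y_0\|_{L^2} \leq C_s e^{\alpha T} \|y_0\|_{L^2} \to 0$ as $T \to +\infty$. Therefore $\|\tar - S_T y_0\|_{L^2} \to \|\tar\|_{L^2} > \e$ by the standing hypothesis $\tar \notin \overline{B}(0,\e)$, so the numerator of the lower bound converges to $\|\tar\|_{L^2} - \e > 0$. Simultaneously, $1-e^{\alpha T} \to 1$, so the lower bound converges to $|\alpha|(\|\tar\|_{L^2} - \e)/\sqrt{\mL} > 0$, which is the desired positive lower bound on $\liminf_{T \to +\infty}\overline{M}(T)$.

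There is no substantial obstacle: the only point requiring some care is the handling of the case $\alpha \geq 0$ in the first assertion, where the formula as written becomes degenerate or gives a trivial bound. In that situation, one bypasses the issue by invoking the underlying inequality $\|L_T u_T^\star\|_{L^2} \leq \sqrt{\mL}\,\overline{M}(T)\int_0^T \|S_s\|_{L(L^2(\Omega))}\,ds$ from the proof of Proposition~\ref{prop-lower-bound}, together with the uniform bound $\|S_s\|_{L(L^2(\Omega))} \leq C_s e^{\alpha s}$ that is bounded on $[0,1]$, to conclude $\overline{M}(T) \geq c'/T$ for $T$ small in exactly the same manner.
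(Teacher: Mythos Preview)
Your proposal is correct and follows essentially the same approach as the paper: both parts are obtained by passing to the limit in the lower bound of Proposition~\ref{prop-lower-bound}, using $S_T y_0 \to y_0$ as $T\to 0$ and $S_T y_0 \to 0$ as $T\to+\infty$ under exponential stability. You are in fact more careful than the paper about the case $\alpha \geq 0$ in the first assertion, where the stated formula~\eqref{lower_bound} is not directly usable and one must revert, as you do, to the underlying estimate $\|L_T u_T^\star\|_{L^2} \leq \sqrt{\mL}\,\overline{M}(T)\int_0^T \|S_s\|\,ds$.
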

\begin{proof}
The estimate \eqref{value-asymp-near-0} is obtained by passing to the limit in~\eqref{lower_bound}, using that $S_T y_0\xrightarrow[T\to 0]{}y_0$: the lower bound behaves as $\textstyle \frac{\|\tar -y_0\|_{L^2} -\e}{\sqrt{\mL}}\; \frac{1}{T}$.
The inequality \eqref{value-asympt-infty} is obtained by passing to the limit $T \rightarrow +\infty$ in~\eqref{lower_bound}, using that $S_T y_0 \xrightarrow[T\to \infty]{}0$:
\[\liminf_{T \rightarrow +\infty} \overline{M}(T) \geq |\alpha|\frac{\|\tar\|_{L^2} -\e}{\sqrt{\mL}}>0.\]
\end{proof}

\subsection{Obstructions}
We further investigate the behaviour of $\overline{M}$, and establish results on the corresponding minimal time problem \eqref{time-optimal-control-problem}.
The comparison principle formulated in \eqref{comp_principle} will be a key ingredient in our study.

\subsubsection{Obstruction to reachability and small-time controllability}

Given the controllability result of Theorem \ref{thm-controllability}, in order to study possible obstructions, we introduce a new bound on the amplitude of the control, of the form:
\begin{equation}
    \label{upper_constraint_bis}
    M(u):=2\sqrt{\cost(u)} \leq M_{\max}, \quad u\in E,
\end{equation}
for some $M_{\max}>0$. Note that such a constraint imposes nonnegativity of the control. With this new constraint on the controls, we illustrate a general property that is well known for finite-dimensional systems: exponential stability prevents reachability.

In particular, the result below holds for uniformly elliptic operators of the form \eqref{def_elliptic} with $0$th order coefficient satisfying $c \leq 0$.

\begin{prpstn}
\label{prop-bound_amplitude}
Assume that $(S_t)_{t \geq 0}$ is exponentially stable.
Let $(y_0,\tar)$ be such that for all $T \geq 0$, $\tar \geq S_T y_0$ and $\|S_T y_0- \tar\|_{L^2} \geq \delta$ for some $\delta>0$.
Then, for all $0<\e<\delta$ there exists $M_{\max}{M}(y_0, \tar, \e)>0$ satisfying 
\begin{itemize}
    \item if $M_{\max}>  M_{\max}(y_0, \tar, \e)$, there exists a time $T>0$ and a control $u \in E$  satisfying~\eqref{upper_constraint_bis}, steering $y_0$ to $\overline{B}(\tar,\e)$ in time $T$.
    If $A^\ast$ satisfies the~\eqref{main_property} property and $\partial_t - A^\ast$ is analytic-hypoelliptic, the control may be chosen to be in $\U^L_{\textrm{shape}}$.
    \item
    if $M_{\max}< M_{\max}(y_0, \tar, \e)$, no such control exists.
\end{itemize}

Moreover, for all $M_{\max}>0$, the control system \eqref{nonnegative-controllability} is not nonnegatively approximately controllable with controls in $\{M(u)\leq M_{\max}\}$ in any time $T>0$.

\end{prpstn}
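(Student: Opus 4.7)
The plan is to introduce the threshold
\[M_{\max}(y_0,\tar,\e) \,:=\, \inf_{T>0}\, 2\sqrt{\Pi(T)},\]
that is, the infimum over time horizons of the amplitude $M(u^\star_T) = 2\sqrt{\cost(u^\star_T)} = 2\sqrt{\Pi(T)}$ of an optimal control for~\eqref{norm-optimal-control-problem}. With this definition, the dichotomy becomes an immediate reformulation of ``$\inf$'', and the failure of approximate controllability reduces to exhibiting, for each $M_{\max}$, a target for which this threshold exceeds $M_{\max}$.

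\textbf{Positivity and finiteness of the threshold.} Finiteness of $\Pi(T)$ at every $T>0$ follows from Proposition~\ref{prop-coercive} together with Remark~\ref{rmk-general-existence}, using $\tar \geq S_T y_0$. For positivity of the infimum, the key observation is that the lower bound of Proposition~\ref{prop-lower-bound} is \emph{uniform in $T$} as soon as $(S_t)_{t\geq 0}$ is exponentially stable: since $\alpha<0$ we have $1 - e^{\alpha T}\leq 1$ for every $T>0$, and $\|\tar - S_T y_0\|_{L^2}\geq \delta>\e$ by assumption, so that
\[\overline M(T) \,\geq\, \frac{|\alpha|(\delta - \e)}{\sqrt{\mL}} \,>\, 0 \qquad \text{for all } T>0.\]
Therefore $0 < M_{\max}(y_0,\tar,\e) < +\infty$.

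\textbf{Dichotomy.} If $M_{\max} > M_{\max}(y_0,\tar,\e)$, the definition of infimum delivers $T_0>0$ with $2\sqrt{\Pi(T_0)} < M_{\max}$. The corresponding optimal control $u^\star_{T_0}$ satisfies $M(u^\star_{T_0}) = 2\sqrt{\Pi(T_0)} < M_{\max}$ and steers $y_0$ to $\overline B(\tar,\e)$ in time $T_0$; under the additional assumptions on $A$, Theorem~\ref{thm-controllability} identifies $u^\star_{T_0}$ as an element of $\U^L_{\textrm{shape}}$. Conversely, if $M_{\max} < M_{\max}(y_0,\tar,\e)$ and some $u\in E$ with $M(u)\leq M_{\max}$ steered $y_0$ to $\overline B(\tar,\e)$ in some time $T$, admissibility of $u$ combined with the variational definition of $\Pi(T)$ would yield $M_{\max}(y_0,\tar,\e) \leq 2\sqrt{\Pi(T)} \leq M(u) \leq M_{\max}$, a contradiction.

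\textbf{Failure of approximate controllability, and main obstacle.} Given any $M_{\max}>0$ and $T>0$, set $y_0 = 0$ (so $S_T y_0 = 0$) and choose any nonnegative $\tar\in L^2(\Omega)$ with $\|\tar\|_{L^2}$ large enough that $\sqrt{2}\,|\alpha|(\|\tar\|_{L^2} - \e)/\sqrt{\mL} > M_{\max}$. Then $\tar\geq S_T y_0$ while, by Proposition~\ref{prop-lower-bound} combined with $1-e^{\alpha T}\leq 1$, any $u$ steering $0$ to $\overline B(\tar,\e)$ in time $T$ must satisfy
\[M(u) \,\geq\, 2\sqrt{\Pi(T)} \,=\, \sqrt{2}\,\overline M(T) \,\geq\, \frac{\sqrt{2}\,|\alpha|(\|\tar\|_{L^2} - \e)}{\sqrt{\mL}} \,>\, M_{\max},\]
ruling out any admissible control. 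The only substantive point in the whole argument is the uniformity in $T$ of the lower bound on $\overline M(T)$: the denominator $1-e^{\alpha T}$ in Proposition~\ref{prop-lower-bound} could in principle vanish as $T\to 0$, and it is precisely exponential stability (through the trivial inequality $1-e^{\alpha T}\leq 1$) that keeps it bounded away from zero and drives both the positivity of the threshold and the counterexample above.
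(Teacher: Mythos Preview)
Your proof is correct and follows the same strategy as the paper: define the threshold as the infimum over $T$ of the optimal amplitude, invoke Proposition~\ref{prop-lower-bound} for positivity, and exhibit a large nonnegative target for the final counterexample. Your positivity argument is in fact slightly more direct than the paper's: you extract the uniform floor $\overline M(T)\geq |\alpha|(\delta-\e)/\sqrt{\mL}$ straight from Proposition~\ref{prop-lower-bound} together with $0<1-e^{\alpha T}\leq 1$, whereas the paper routes through continuity (Proposition~\ref{prop-M-continuous}) and the asymptotics of Corollary~\ref{prop-M(t)-limits}; your shortcut avoids the continuity result altogether.

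One remark on your closing paragraph: the sentence ``the denominator $1-e^{\alpha T}$ \ldots\ could in principle vanish as $T\to 0$, and it is precisely exponential stability \ldots\ that keeps it bounded away from zero'' has the concern backwards. The denominator \emph{does} vanish as $T\to 0$ regardless of the sign of $\alpha$, but this only makes the right-hand side of~\eqref{lower_bound} blow up, which strengthens the bound. What exponential stability actually buys you is the \emph{upper} bound $1-e^{\alpha T}\leq 1$ (together with positivity of the denominator), and it is this upper bound on the denominator that produces the uniform floor you use. The inequality you wrote is correct; only the surrounding explanation needs adjusting.
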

\begin{proof}
Given Corollary \ref{prop-M(t)-limits}, the function $\overline{M}(T)$ goes to $+\infty$ as $T \rightarrow 0$, is bounded away from~$0$ at infinity, and does not vanish over the interval $(0,+\infty)$. Since it is continuous, we define 
\[M_{\max}(y_0, \tar, \e):= \inf_{T > 0} \overline{M}(T)>0,\]
and the first two claims follow. When $A^\ast$ satisfies the~\eqref{main_property} property and $\partial_t - A^\ast$ is analytic-hypoelliptic, the control may be chosen to be in $\U^L_{\textrm{shape}}$ by Theorem~\ref{thm-controllability}.

Then, let $M_{\max}>0$. Taking $\tar\in L^2(\Omega)$ such that $\|\tar\|_{L^2}> \textstyle\frac{\sqrt{\mL}}{|\alpha|}M_{\max}+\e$
and $y_0\in L^2(\Omega)$ such that $\tar \geq S_T y_0$ and $\|S_T y_0- \tar\|_{L^2} \geq \delta>0$. Thanks to the proof of Corollary~\ref{prop-M(t)-limits}, we infer $M_{\max}(y_0, \tar, \e) \geq |\alpha| \textstyle \frac{\|\tar\|_{L^2}-\e}{\sqrt{\mL}}>M_{\max}$.
It follows from the second claim that $y_0$ cannot be steered to $\tar$ in any time $T>0$ with a control $u$ such that $M(u)\leq M_{\max}$.
Thus, system \eqref{nonnegative-controllability} is not nonnegatively approximately controllable with such controls in any time $T>0$.

\end{proof}

 \subsubsection{Characterisation of minimal time controls}

Throughout this section, we let $\e>0$, $\tar\in L^2(\Omega)$, we assume that \eqref{non-trivial-target} holds, and let $y_0=0$.
Hence we must have $\|\tar\|_{L^2} > \e$ and the condition \eqref{non-trivial-target} is independent of $T$. Finally, $\tar \geq S_T y_0$ here simply amounts to $\tar \geq 0$.

Given the obstruction result of Proposition \ref{prop-bound_amplitude}, we consider the \textit{minimal time} control problem:
\begin{equation}
    \label{time-optimal-control-problem}
    T^\star(\lambda)=\inf\{T>0, \quad \exists u\in E,  \quad \|L_T u-\tar\|_{L^2}\leq \e, \quad \cost(u)\leq \lambda\}, \quad \lambda>0.
\end{equation}

From our study of the optimal control problem~\eqref{ocp-primal}, we know that this minimal time is well defined for $\lambda\in \overline{M}((0,+\infty))$. Under appropriate assumptions, we will show that it is reached, and characterise the minimal time controls, by establishing a form of equivalence between the optimal control problem and the corresponding minimal time problem. This is now a well-known feature for parabolic equations (see~ \cite{wang_equivalence_2012, Kunisch-Wang-2013, qin2018equivalence}).

 
\paragraph{Further study of the value function $\overline{M}$.}

 Using strong duality again, we will establish that $\overline{M}$ is a non-increasing function under the assumption that $A^\ast$ satisfies the comparison principle~\eqref{comp_principle}. We start with the following general lemma:
 \begin{lmm}
\label{lem-J-comparison}
Given any $0<T_1<T_2$, and $y_0=0$, for a general unbounded linear operator $A$, the dual functional defined by~\eqref{dualfct} satisfies:
\begin{equation}
    \label{value-function-decreasing}
    J_{T_1, \e}(\adjtar) \leq J_{T_2,\e}(\adjtar), \quad \forall \adjtar\in L^2(\Omega),
\end{equation}
 with equality if and only if 
 \begin{equation}\label{value-function-equality}
     L_{T_2}^\ast \adjtar (t) \leq 0, \quad \forall t \in [0, T_2- T_1].
 \end{equation}
\end{lmm}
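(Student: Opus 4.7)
The plan is to observe that with $y_0=0$, the last two terms of the dual functional $\dualfct(\adjtar) = F_T^\ast(L_T^\ast \adjtar) - \langle \tar - S_T y_0, \adjtar\rangle_{L^2}+ \e \|\adjtar\|_{L^2}$ become $-\langle \tar, \adjtar\rangle_{L^2} + \e\|\adjtar\|_{L^2}$, which is independent of $T$. Thus the comparison $J_{T_1,\e}(\adjtar) \leq J_{T_2,\e}(\adjtar)$ reduces to the monotonicity of $T \mapsto F_T^\ast(L_T^\ast \adjtar)$.

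Next, I would perform the change of variable $s = T-t$ to rewrite
\[F_T^\ast(L_T^\ast \adjtar) = \frac{1}{2}\left(\int_0^T \sigma_{\overline{\U}_L}(S_{T-t}^\ast \adjtar)\, dt\right)^2 = \frac{1}{2}\left(\int_0^T \sigma_{\overline{\U}_L}(S_s^\ast \adjtar)\, ds\right)^2.\]
Splitting $\int_0^{T_2}  = \int_0^{T_1} + \int_{T_1}^{T_2}$ and using that $\sigma_{\overline{\U}_L}\ge 0$ (this is immediate from taking $u = 0 \in \overline{\U}_L$ in the defining supremum), both integrals are nonnegative and the one over $[0,T_2]$ dominates the one over $[0,T_1]$. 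Squaring nonnegative quantities preserves this ordering, giving $F_{T_1}^\ast(L_{T_1}^\ast \adjtar) \leq F_{T_2}^\ast(L_{T_2}^\ast \adjtar)$, which is the claimed inequality \eqref{value-function-decreasing}.

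For the equality case, the same squaring argument shows $J_{T_1,\e}(\adjtar) = J_{T_2,\e}(\adjtar)$ is equivalent to $\int_{T_1}^{T_2}\sigma_{\overline{\U}_L}(S_s^\ast \adjtar)\,ds = 0$. Since the integrand is nonnegative, this amounts to $\sigma_{\overline{\U}_L}(S_s^\ast \adjtar) = 0$ for almost every $s \in [T_1, T_2]$. The argument from the proof of Lemma~\ref{continuation} tells us that $\sigma_{\overline{\U}_L}(v) = 0 \iff v \leq 0$ in $L^2(\Omega)$ (the reverse implication being immediate since every $u \in \overline{\U}_L$ is nonnegative). Hence equality holds iff $S_s^\ast \adjtar \leq 0$ for a.e.\ $s \in [T_1, T_2]$. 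Finally, since $s \mapsto S_s^\ast \adjtar$ is continuous from $[0,T_2]$ into $L^2(\Omega)$ (as $(S_t^\ast)_{t\geq 0}$ is a $C_0$-semigroup) and since the cone of nonpositive elements is closed in $L^2(\Omega)$, this a.e.\ condition propagates to every $s \in [T_1,T_2]$. Rewriting $s = T_2 - t$, this is exactly $L_{T_2}^\ast \adjtar(t) \leq 0$ for all $t \in [0, T_2-T_1]$, which is \eqref{value-function-equality}.

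I do not anticipate a serious obstacle: the argument is essentially a change of variable plus the monotonicity of squaring on $[0,\infty)$; the only mildly delicate point is the a.e.-to-everywhere upgrade in the equality characterization, which is handled by strong continuity of the semigroup.
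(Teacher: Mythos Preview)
Your proof is correct and follows essentially the same approach as the paper: both reduce the comparison to the integral term via $y_0=0$, use the change of variable $s=T-t$ (equivalently, the relation $L_{T_1}^\ast \adjtar(t)=L_{T_2}^\ast \adjtar(T_2-T_1+t)$) together with $\sigma_{\overline{\U}_L}\ge 0$, and invoke the argument of Lemma~\ref{continuation} for the equality case. Your version is slightly more detailed in that you make explicit the a.e.-to-everywhere upgrade via strong continuity of the semigroup, which the paper leaves implicit.
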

\begin{proof}
Since $y_0=0$, inequality \eqref{value-function-decreasing} follows immediately from the comparison of the integral terms in the expression of the $J_{T_i,\e}, i\in \{1,2\}.$
Moreover, for $\adjtar\in L^2(\Omega)$, one has
$J_{T_1,\e}(\adjtar)=J_{T_2,\e}(\adjtar)$
if and only if
\[\int_0^{T_1} \sigma_{\overline{\U}_L}\left(L_{T_1}^\ast \adjtar (t)\right) dt=\int_0^{T_2} \sigma_{\overline{\U}_L}\left(L_{T_2}^\ast \adjtar (t)\right) dt,\]
that is, by definition of the operators $L_{T_i}^\ast$ (see \eqref{backward} which are obviously related by $L_{T_1,\e}^\ast \adjtar (t)  =L_{T_2,\e}^\ast \adjtar (T_2-T_1+t)$ for all $t \in(0,T_1)$,
\[\int_{0}^{T_2-T_1}\sigma_{\overline{\U}_L}\left(L_{T_2}^\ast \adjtar (t)\right) dt=0.\]
Using the definition of the support function $\sigma_{\overline{\U}_L}$ (see the proof of Lemma~\ref{continuation}), this is equivalent to~\eqref{value-function-equality}.
\end{proof}

\begin{crllr}
\label{decreasing}
The function $\overline{M}$ (and hence $\Pi$) are non-increasing on $(0, +\infty)$.
\end{crllr}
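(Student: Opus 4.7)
The plan is to transfer the pointwise inequality supplied by Lemma~\ref{lem-J-comparison} to the infima of $J_{T,\e}$, and then reinterpret the result through strong duality. The whole proof is short because the lemma has been designed in exactly the form needed here: the delicate work (comparing dual functionals at different final times using the structure $L_T^\ast \adjtar(t) = S_{T-t}^\ast \adjtar$ and the positivity of $\sigma_{\overline{\U}_L}$) has already been carried out.

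Concretely, I would proceed as follows. Fix $0 < T_1 < T_2$. Lemma~\ref{lem-J-comparison} gives $J_{T_1,\e}(\adjtar) \leq J_{T_2,\e}(\adjtar)$ for every $\adjtar \in L^2(\Omega)$. Taking the infimum over $\adjtar$ on both sides preserves the inequality, yielding
\[
\inf_{\adjtar \in L^2(\Omega)} J_{T_1,\e}(\adjtar) \;\leq\; \inf_{\adjtar \in L^2(\Omega)} J_{T_2,\e}(\adjtar).
\]
By Lemma~\ref{lem-contact-condition} strong duality holds, and by Proposition~\ref{prop-coercive} these infima are attained; the strong duality identity \eqref{duality-M-J} then reads $\Pi(T) = \tfrac{1}{2}(\overline{M}(T))^2 = -\inf_{\adjtar} J_{T,\e}(\adjtar)$. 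Negating the displayed inequality therefore gives $\Pi(T_1) \geq \Pi(T_2)$.

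Finally, since $\overline{M} \geq 0$ and $\Pi = \tfrac{1}{2}\overline{M}^2$, the non-increasing character of $\Pi$ is equivalent to that of $\overline{M}$, which concludes the proof. There is essentially no obstacle beyond citing the earlier results; note in particular that no additional hypothesis on the semigroup (e.g.\ comparison principle, analytic-hypoellipticity, or the~\eqref{main_property} property) is needed, in agreement with Remark~\ref{rmk-general-existence}.
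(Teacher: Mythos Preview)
Your argument is correct and matches the paper's reasoning: the corollary is immediate from Lemma~\ref{lem-J-comparison} together with the strong duality identity~\eqref{duality-M-J}, exactly as you write. The paper does not spell out a separate proof beyond remarking that ``by strong duality, Lemma~\ref{lem-J-comparison} implies that $\overline{M}$ is non-increasing,'' so your write-up is in fact more detailed than the paper's.
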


 We now denote 
$\mu_-= \mu_-(\tar):= \lim_{T\rightarrow +\infty} \Pi(T) = \lim_{T\rightarrow +\infty} \textstyle \frac12 \overline{M}(T)^2$. 
Note that $\mu_- \in [0,+\infty)$, and if the semi-group generated by $A$ is exponentially stable, $\mu_->0$ as established by~\eqref{value-asympt-infty} in Corollary~\ref{prop-M(t)-limits}.

 \begin{prpstn}
Assume $A^\ast$ satisfies the comparison principle~\eqref{comp_principle}.
Then, there exists $T_\ell = T_\ell(\tar) \in (0, +\infty]$ such that $\overline{M}$ is decreasing on $[0, T_\ell)$, and constant on $[T_\ell, +\infty)$.
\end{prpstn}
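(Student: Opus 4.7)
The plan is to introduce the limiting value $\overline{M}_\infty := \lim_{T \to +\infty} \overline{M}(T)$, which is well defined and finite by Corollary~\ref{decreasing}, and then set
\[
T_\ell := \inf\{T > 0 \ :\ \overline{M}(T) = \overline{M}_\infty\} \in (0, +\infty],
\]
with $T_\ell > 0$ guaranteed by Corollary~\ref{prop-M(t)-limits} (since $\overline{M}(T) \to +\infty$ as $T \to 0$ under our standing assumption $\tar \notin \overline{B}(0,\e)$). The heart of the proof is a propagation claim: if $\overline{M}(T_1) = \overline{M}(T_2)$ for some $0 < T_1 < T_2$, then $\overline{M}(T) = \overline{M}(T_1)$ for every $T \geq T_1$.

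To prove this claim, let $\adjtar_2^\star$ be the unique dual minimiser at time $T_2$ (Proposition \ref{prop-uniqueness-p-u}), so that $J_{T_2,\e}(\adjtar_2^\star) = -\Pi(T_2) = -\Pi(T_1)$ by assumption and strong duality. Testing $\adjtar_2^\star$ against $J_{T_1,\e}$ and using Lemma \ref{lem-J-comparison}, we get the sandwich
\[
-\Pi(T_1) \leq J_{T_1,\e}(\adjtar_2^\star) \leq J_{T_2,\e}(\adjtar_2^\star) = -\Pi(T_1),
\]
so equality holds in Lemma \ref{lem-J-comparison}, yielding $L_{T_2}^\ast \adjtar_2^\star(t) = S_{T_2 - t}^\ast \adjtar_2^\star \leq 0$ for all $t \in [0, T_2 - T_1]$; in particular $S_{T_2}^\ast \adjtar_2^\star \leq 0$. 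This is where the comparison principle for $A^\ast$ enters: it gives $S_{T_2+s}^\ast \adjtar_2^\star = S_s^\ast(S_{T_2}^\ast \adjtar_2^\star) \leq 0$ for all $s \geq 0$. Hence for every $T \geq T_2$ one has $L_T^\ast \adjtar_2^\star(t) \leq 0$ on $[0, T-T_2]$, so $\sigma_{\overline{\U}_L}$ vanishes there (as in Lemma \ref{continuation}), and the change of variable $s = t - (T-T_2)$ on $[T-T_2,T]$ shows $J_{T,\e}(\adjtar_2^\star) = J_{T_2,\e}(\adjtar_2^\star)$. Therefore $\Pi(T) \geq \Pi(T_2)$, and the reverse inequality is exactly Corollary~\ref{decreasing}, proving the claim.

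The conclusion is now immediate. If $T_\ell < +\infty$, continuity of $\overline{M}$ (Proposition \ref{prop-M-continuous}) gives $\overline{M}(T_\ell) = \overline{M}_\infty$, and monotonicity plus the trivial lower bound $\overline{M}(T) \geq \overline{M}_\infty$ force $\overline{M} \equiv \overline{M}_\infty$ on $[T_\ell, +\infty)$. On $[0, T_\ell)$ (with the convention $\overline{M}(0) = +\infty$), strict monotonicity follows by contradiction: any equality $\overline{M}(T_1) = \overline{M}(T_2)$ with $T_1 < T_2 < T_\ell$ would, by the propagation claim, freeze $\overline{M}$ at the value $\overline{M}(T_1) > \overline{M}_\infty$ on all of $[T_1, +\infty)$, contradicting either the definition of $T_\ell$ (finite case) or the convergence to $\overline{M}_\infty$ at infinity (case $T_\ell = +\infty$). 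The main obstacle is precisely this propagation from a single flat interval $[T_1, T_2]$ to the whole half-line $[T_1, +\infty)$: Lemma \ref{lem-J-comparison} alone only yields constancy on $[T_1, T_2]$, and it is the comparison principle for $A^\ast$ that propagates the sign $S_{T_2}^\ast \adjtar_2^\star \leq 0$ forward in time, which is what makes the test variable $\adjtar_2^\star$ remain optimal for every $T \geq T_2$.
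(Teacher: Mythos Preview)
Your proof is correct and follows essentially the same route as the paper: both hinge on the propagation claim that a flat interval $[T_1,T_2]$ forces $\overline{M}$ to be constant on $[T_1,+\infty)$, proved by feeding the dual minimiser at $T_2$ into the equality case of Lemma~\ref{lem-J-comparison} and then using the comparison principle for $A^\ast$ to push the sign $S^\ast_s \adjtar_2^\star\le 0$ forward in $s$. Your argument is in fact slightly leaner than the paper's, since you never invoke the uniqueness of the dual minimiser to identify $\adjtar_1^\star=\adjtar_2^\star$; you simply use $\adjtar_2^\star$ as a test variable in every $J_{T,\e}$ and close with monotonicity (one should perhaps say explicitly that the intermediate range $T\in[T_1,T_2]$ is handled by monotonicity together with $\Pi(T_1)=\Pi(T_2)$).
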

\begin{rmrk}
The proposition above implies in particular that 
$\overline{M}$ either decreases on the whole of $(0, +\infty)$ to its limit $\mu_-$ (if $T_\ell = +\infty)$, or reaches it at $T_\ell<+\infty$ and then remains constant. 
\end{rmrk}
\begin{proof}By strong duality, Lemma \ref{lem-J-comparison} implies that $\overline{M}$ is non-increasing. 
Let $T_2>T_1>0$, and denote 
$p^\star_{T_1}, p^\star_{T_2}$ the associated dual minimisers. Assume that 
\begin{equation}\label{M-flat}
   \overline{M}(T_1)=\overline{M}(T_2).
\end{equation}
From Lemma \ref{lem-J-comparison}, and by definition of $p_{T_1}^\star$, we know that 
\begin{equation}\label{J-ineq-chain}J_{T_1,\e}(p_{T_1}^\star)\leq J_{T_1,\e}(p_{T_2}^\star) \leq J_{T_2,\e}(p_{T_2}^\star).\end{equation}
From \eqref{duality-M-J}, \eqref{M-flat} implies that 
$J_{T_1,\e}(p_{T_1}^\star)=J_{T_2,\e}(p_{T_2}^\star),$
so that all the inequalities in \eqref{J-ineq-chain} actually are equalities.

By uniqueness of the dual optimal variable (Proposition \ref{prop-uniqueness-p-u}), the first equality implies that
\begin{equation}\label{common-p}
    p_{T_1}^\star=p_{T_2}^\star=:\adjtar^\star.
\end{equation}
From Lemma \ref{lem-J-comparison}, the second equality implies that
\begin{equation}\label{p-becomes-negative}
    L_{T_2}^\ast \adjtar^\star (t) \leq 0, \quad \forall t\in [0, T_2-T_1].
\end{equation}
From \eqref{common-p} and \eqref{p-becomes-negative}, we get 
$p_T^\star = \adjtar^\star$ for all $T \in [T_1, T_2]$.
Now, for $T>T_2$, the comparison principle~\eqref{comp_principle} and inequality \eqref{p-becomes-negative} imply that 
$L_{T}^\ast \adjtar^\star (t) \leq 0$ for all  $t\in [0, T-T_1].$
From Lemma \ref{lem-J-comparison}, we then get
$J_{T,\e}(\adjtar^\star)=J_{T_1,\e}(\adjtar^\star), $
which implies
$J_{T,\e}(\adjtar^\star)=J_{T_1,\e}(\adjtar^\star) \leq J_{T,\e}(p_T^\star).$
By definition of the dual minimiser~$p_T^\star$ of $J_{T,\e}$, we also have $J_{T,\e}(p_T^\star)\leq J_{T,\e}(\adjtar^\star),$
and then finally, 
$J_{T,\e}(p_T^\star)=J_{T,\e}(\adjtar^\star),$
\ie 
    $
    p_T^\star=\adjtar^\star.
$
This implies, thanks to \eqref{duality-M-J}, that
    $    \overline{M}(T)=\overline{M}(T_1)=\overline{M}(T_2),$
which proves the proposition.
\end{proof}
\begin{rmrk}
It follows from all the above and \eqref{p-becomes-negative} that, when $A^\ast$ satisfies the comparison principle~\eqref{comp_principle}, if $T_\ell<+\infty$, then
\begin{equation*}
    L_{T}^\ast p_{T_\ell}^\star (t) \leq 0, \quad \forall T\geq T_\ell, \quad \forall t\in [0, T-T_\ell],
\end{equation*}
and
\begin{equation*}
    u_T^\star(t)=\left\{\begin{aligned}
   & 0 & \quad \text{if} \quad t\in (0, T-T_\ell), \\
&    u_{T_\ell}^\star(t-T+T_\ell)& \quad \text{if} \quad t\in( T-T_\ell,T),
    \end{aligned}\right., \quad \forall T\geq T_\ell
\end{equation*}
is an optimal control on $[0,T]$ whenever $u_{T_\ell}$ is an optimal control on $[0, T_\ell]$.
\end{rmrk}

We now establish the relationship between the optimal control problem \eqref{norm-optimal-control-problem} and the minimal time control problem.
\begin{prpstn}\label{prop-norm-time-equiv}
Assume that $A^\ast$ satisfies the comparison principle~\eqref{comp_principle}. Then, for all $T\in (0, T_\ell)$, any optimal control for \eqref{norm-optimal-control-problem} on $[0,T]$ is a minimal time control, that is,
\begin{equation*}
    \label{T*-M*}
    T^\star(\Pi(T))=T.\end{equation*}
Moreover, for any $\lambda>\mu_-$,
\begin{equation*}
    \Pi(T^\star(\lambda))=\lambda.\end{equation*}
\end{prpstn}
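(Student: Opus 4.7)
The two identities are essentially an equivalence between the norm-minimisation and the time-minimisation problems, and both will follow from the strict monotonicity of $\Pi$ on $[0,T_\ell)$ (established in the previous proposition), combined with its continuity (Proposition~\ref{prop-M-continuous}) and the boundary values $\Pi(0^+) = +\infty$ (Corollary~\ref{prop-M(t)-limits}) and $\lim_{T\to +\infty}\Pi(T) = \mu_-$ (by definition). I would prove the first identity by a short contradiction argument, then deduce the second from an intermediate value argument, so most of the work lies in the first claim.

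\textbf{First identity.} Fix $T \in (0,T_\ell)$ and let $u^\star$ be any optimal control for~\eqref{norm-optimal-control-problem} at time $T$, so that $\cost(u^\star) = \Pi(T)$ and $\|L_T u^\star - \tar\|_{L^2} \leq \e$. By definition, $u^\star$ is admissible for the minimal time problem~\eqref{time-optimal-control-problem} with $\lambda = \Pi(T)$, whence $T^\star(\Pi(T)) \leq T$. Suppose for contradiction that $T^\star(\Pi(T)) < T$; pick
\[T' \in \bigl(T^\star(\Pi(T)),\,T\bigr), \qquad T' > 0.\]
By the very definition of the infimum $T^\star(\Pi(T))$, there exists $u' \in E$ with $\|L_{T'} u' - \tar\|_{L^2} \leq \e$ and $\cost(u') \leq \Pi(T)$. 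Hence $u'$ is admissible for the norm-minimisation problem~\eqref{norm-optimal-control-problem} at time $T'$, so that $\Pi(T') \leq \cost(u') \leq \Pi(T)$. But $0 < T' < T < T_\ell$ and $\Pi$ is strictly decreasing on $[0,T_\ell)$ by the previous proposition, so $\Pi(T') > \Pi(T)$, a contradiction. Therefore $T^\star(\Pi(T)) = T$.

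\textbf{Second identity.} Let $\lambda > \mu_-$. The function $\Pi$ is continuous on $(0,+\infty)$ (Proposition~\ref{prop-M-continuous}), strictly decreasing on $(0,T_\ell)$, and satisfies $\lim_{T\to 0^+} \Pi(T) = +\infty$ together with $\lim_{T\to T_\ell^-} \Pi(T) = \mu_-$ (using Corollary~\ref{prop-M(t)-limits}, the definition of $\mu_-$, and in case $T_\ell < +\infty$ the fact that $\Pi$ is constant on $[T_\ell,+\infty)$). By the intermediate value theorem, there exists a unique $T_\lambda \in (0, T_\ell)$ with $\Pi(T_\lambda) = \lambda$. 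In particular, an optimal control for $\Pi(T_\lambda)$ is admissible in the minimal time problem, so $T^\star(\lambda) \leq T_\lambda < +\infty$. Applying the first identity at $T = T_\lambda$ yields
\[T^\star(\lambda) = T^\star(\Pi(T_\lambda)) = T_\lambda,\]
and hence $\Pi(T^\star(\lambda)) = \Pi(T_\lambda) = \lambda$, as desired.

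\textbf{Main obstacle.} The only subtle point is making the contradiction argument rigorous when the infimum defining $T^\star$ may fail to be attained: one must pick $T'$ strictly between $T^\star(\Pi(T))$ and $T$ to be sure an admissible $u'$ exists, and then ensure $T' > 0$, which is automatic because $\Pi(T) < +\infty$ while $\Pi(0^+) = +\infty$ forces $T^\star(\Pi(T)) \geq 0$. Everything else is a direct application of the monotonicity and boundary behaviour of $\Pi$ obtained earlier in Section~\ref{sec-comments}.
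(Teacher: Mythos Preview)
Your argument is correct and follows essentially the same route as the paper: a contradiction with the strict monotonicity of $\Pi$ on $(0,T_\ell)$ for the first identity, then the intermediate value theorem for the second. One small imprecision: from $T' > T^\star(\Pi(T))$ the definition of the infimum only guarantees an admissible control at some $T'' \in [T^\star(\Pi(T)),T')$, not at $T'$ itself; the paper simply takes such a $T'' = T-\delta$ directly, and you can do the same (or observe that, since $y_0=0$, padding by zero on $[0,T'-T'']$ shows the feasible set of times is upward closed, which validates your claim at $T'$).
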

 \begin{proof}
 We proceed by contradiction. Assume that $T^\star(\Pi(T))<T.$
 Then, there exists $\delta>0$ and a control $u_\delta\in L^2(0,T-\delta;L^2(\Omega))$ such that
 $\cost(u_\delta)\leq \Pi(T).$
 Now, any optimal control $u^\star_\delta$ (in the sense of optimal control problem \eqref{norm-optimal-control-problem} in time $T-\delta$) satisfies $\cost(u^\star_\delta)\leq \cost (u_\delta)$  (the inequality is not necessarily strict, as $u_\delta$ could be an optimal control),
 \ie
$\Pi(T-\delta)=\cost(u^\star_\delta)\leq \cost(u_\delta) \leq \Pi(T),$
 which contradicts the fact that $T\mapsto \Pi(T)$ is a decreasing function on $(0, T_\ell)$. Thus, \eqref{T*-M*} holds.
 
 Now, let $ \lambda>\mu_-$. From Corollaries \ref{prop-M(t)-limits}, \ref{decreasing} and Proposition \ref{prop-M-continuous}, there exists $T\in(0, T_\ell)$ such that 
 $\Pi(T)=\lambda.$
 Applying $T^\star$ to the above and using \eqref{T*-M*}, we get
$T^\star(\lambda)=T^\star(\Pi(T))=T.$
 Then, applying $\Pi$ to the above yields
$\Pi(T^\star(\lambda))=\Pi(T)=\lambda.$ \end{proof}

We can also formulate the above result in the following way: for all $\lambda > \mu_-$,
 \[T^\star(\lambda)=\inf\{T>0, \quad \Pi(T)\leq \lambda\},\]
 that is, $T^\star$ is the pseudo-inverse of $\Pi$ on $(\mu_-, +\infty)$.

 


In terms of the time optimal control problem, we now have a complete characterisation of time optimal controls for \eqref{time-optimal-control-problem}: 
 \begin{thrm}
Assume that $A^\ast$ satisfies the comparison principle~\eqref{comp_principle}.
 \label{thm-existence-time-optimal-controls}
 For any $\lambda>\mu_-$, $T^\star(\lambda)<+\infty$, and $T^\star(\lambda)\xrightarrow[\lambda\to\infty]{}0$, $T^\star(\lambda)\xrightarrow[\lambda\to\mu_-]{}+\infty$.
 As a consequence, the domain of definition of $T^\star$ is $(\mu_-,+\infty)$, and on its domain of definition, $T^\star$ is continuous and decreasing.
 
Moreover, if $A^\ast$ satisfies the~\eqref{main_property} property and $\partial_t - A^\ast$ is analytic-hypoelliptic, there exists a unique minimal time control for \eqref{time-optimal-control-problem}, given by the optimal control problem \eqref{ocp-primal}, and it lies in $\U^L_{\textrm{shape}}$ .
 \end{thrm}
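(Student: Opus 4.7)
The plan is to extract each assertion from the structural properties of the value function $\Pi=\tfrac12\overline{M}^2$ established earlier (continuity by Proposition~\ref{prop-M-continuous}, monotonicity by Corollary~\ref{decreasing}, asymptotics by Corollary~\ref{prop-M(t)-limits}), combined with the pseudo-inverse relations of Proposition~\ref{prop-norm-time-equiv}. In short, $T^\star$ is the inverse of $\Pi$ restricted to its strictly decreasing portion $(0,T_\ell)$, so everything about $T^\star$ is obtained by inverting properties of $\Pi$.

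For the domain of definition and the finiteness of $T^\star(\lambda)$, I would first assemble the fact that $\Pi$ is continuous on $(0,+\infty)$, nonincreasing, strictly decreasing on $(0,T_\ell)$ and equal to $\mu_-$ on $[T_\ell,+\infty)$, satisfies $\Pi(T)\to+\infty$ as $T\to 0^+$ (via Proposition~\ref{prop-lower-bound} or Corollary~\ref{prop-M(t)-limits}) and $\Pi(T)\to\mu_-$ as $T\to+\infty$ by definition of $\mu_-$. Hence $\Pi|_{(0,T_\ell)}$ is a continuous, strictly decreasing bijection onto $(\mu_-,+\infty)$. For any $\lambda>\mu_-$, the intermediate value theorem produces a unique $T\in(0,T_\ell)$ with $\Pi(T)=\lambda$, and Proposition~\ref{prop-norm-time-equiv} then yields $T^\star(\lambda)=T<+\infty$. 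For $\lambda<\mu_-$ no admissible control exists since $\cost(u)\geq\mu_-$ for any $u$ steering the system to the target ball in any time, which identifies the domain of $T^\star$ with $(\mu_-,+\infty)$.

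Continuity, strict decrease, and the two limits of $T^\star$ then follow immediately from the identification $T^\star=(\Pi|_{(0,T_\ell)})^{-1}$ given by Proposition~\ref{prop-norm-time-equiv}. For the limits I would argue by a subsequence argument exploiting continuity of $\Pi$: any accumulation point $T_0\in[0,T_\ell]$ of $T^\star(\lambda)$ as $\lambda\to+\infty$ would satisfy $\Pi(T_0)=+\infty$, forcing $T_0=0$; symmetrically, as $\lambda\to\mu_-^+$ any accumulation point $T_0$ satisfies $\Pi(T_0)=\mu_-$, forcing $T_0=T_\ell$. In the regime $T_\ell=+\infty$ this recovers $T^\star(\lambda)\to+\infty$ as stated.

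For the uniqueness and on-off shape structure of the minimal time control under (GUC) and analytic-hypoellipticity, I would apply Theorem~\ref{thm-controllability} at the horizon $T=T^\star(\lambda)$: it provides a unique optimal control $u^\star\in\U_{\mathrm{shape}}^L$ for the primal problem~\eqref{ocp-primal} with $\cost(u^\star)=\Pi(T^\star(\lambda))=\lambda$, which is admissible for~\eqref{time-optimal-control-problem} and realises the minimal time $T^\star(\lambda)$. Conversely, any minimal time control $\tilde u$ at horizon $T^\star(\lambda)$ with $\cost(\tilde u)\leq\lambda=\Pi(T^\star(\lambda))$ is itself a minimiser of~\eqref{ocp-primal} in time $T^\star(\lambda)$, hence equals $u^\star$ by the uniqueness part of Theorem~\ref{thm-controllability}. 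The main subtlety is the limit as $\lambda\to\mu_-^+$: one genuinely only gets $T^\star(\lambda)\to T_\ell$, so the stated $+\infty$ limit and the exact identification of the domain as $(\mu_-,+\infty)$ (excluding the endpoint) place one implicitly in the generic regime $T_\ell=+\infty$, which one would either need to verify a priori or interpret accordingly.
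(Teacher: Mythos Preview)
Your proposal is correct and follows exactly the approach the paper implicitly sets up (the paper states the theorem without a dedicated proof, relying on the pseudo-inverse characterisation $T^\star=(\Pi|_{(0,T_\ell)})^{-1}$ together with the established continuity, monotonicity and asymptotics of $\Pi$, and on Theorem~\ref{thm-controllability} for the uniqueness and shape of the minimal-time control). Your observation about the $T_\ell$ subtlety is apt: the stated limit $T^\star(\lambda)\to+\infty$ as $\lambda\to\mu_-^+$ and the identification of the domain as exactly $(\mu_-,+\infty)$ do indeed presuppose $T_\ell=+\infty$, a point the paper leaves implicit.
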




\paragraph{Acknowledgments.} The authors are grateful to Rémy Abergel for enlightening discussions about Fenchel duality. All three authors acknowledge the support of the ANR project TRECOS, grant number ANR-20-CE40-0009. 

\appendix
\section{Convex analysis}
\label{app-convex-analysis}
\subsection{Core properties of Fenchel conjugation}
\label{app-subsec-conj}
A fundamental property of conjugation is involution (over $\Gamma_0(H))$:
\begin{thrm}[Fenchel-Moreau]
Given any $f\in\Gamma_0(H)$, there holds $f^\ast \in \Gamma_0(H)$ and $f^{\ast \ast} = f$.
\end{thrm}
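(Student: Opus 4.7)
The plan is to handle the two claims separately. First I would establish that $f^\ast \in \Gamma_0(H)$, and then prove the involution identity $f^{\ast\ast} = f$.

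For the first part, $f^\ast$ is the pointwise supremum of the family of continuous affine functions $y \mapsto \langle y, x\rangle - f(x)$ indexed by $x \in \operatorname{dom}(f)$; any such supremum is automatically convex and lower semicontinuous, so the only nontrivial requirement is properness, i.e.\ $f^\ast \not\equiv +\infty$. To secure this I would invoke a Hahn--Banach separation argument: since $f$ is proper, convex and lsc, $\operatorname{epi}(f)$ is a nonempty closed convex subset of $H \times \mathbb{R}$ that does not contain any point of the form $(x_0, \alpha_0)$ with $\alpha_0 < f(x_0)$. Strictly separating such a point from $\operatorname{epi}(f)$ by a closed hyperplane, and checking (via $t \to +\infty$ in the epigraph) that the hyperplane is non-vertical, yields an affine minorization $f(x) \geq \langle y, x\rangle - \beta$ valid for all $x \in H$, which is exactly $f^\ast(y) \leq \beta < +\infty$. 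This gives $\operatorname{dom}(f^\ast) \neq \emptyset$.

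For the involution, the easy half $f^{\ast\ast} \leq f$ follows directly from the definitions: for all $x, y \in H$, $\langle x,y \rangle - f(x) \leq f^\ast(y)$ by definition of $f^\ast$, hence $\langle x, y\rangle - f^\ast(y) \leq f(x)$, and taking the supremum over $y$ yields $f^{\ast\ast}(x) \leq f(x)$. The reverse inequality $f^{\ast\ast} \geq f$ is where the main work lies. I would argue by contradiction: suppose there exists $x_0 \in H$ and $\alpha \in \mathbb{R}$ such that $f^{\ast\ast}(x_0) \leq \alpha < f(x_0)$. Then $(x_0, \alpha) \notin \operatorname{epi}(f)$, and since the latter is closed and convex a strict Hahn--Banach separation furnishes $(z, \gamma) \in H \times \mathbb{R}$ and $c \in \mathbb{R}$ such that
\[
\langle z, x\rangle + \gamma t \;>\; c \;>\; \langle z, x_0\rangle + \gamma \alpha, \qquad \forall (x,t) \in \operatorname{epi}(f).
\]
Sending $t \to +\infty$ forces $\gamma \geq 0$. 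In the generic case $\gamma > 0$, dividing by $\gamma$ and rearranging yields $f(x) > \langle -z/\gamma, x_0 - x \rangle + \alpha$ for all $x \in \operatorname{dom}(f)$, which after taking a supremum gives $f^\ast(-z/\gamma) + \langle -z/\gamma, x_0 \rangle < -\alpha$ and hence $f^{\ast\ast}(x_0) > \alpha$, contradicting our assumption.

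The hard part will be the degenerate case $\gamma = 0$, in which the separating hyperplane is vertical and the above rearrangement breaks down. Here I would exploit the properness of $f^\ast$ already established in the first step: pick any $y_0 \in \operatorname{dom}(f^\ast)$ and, for $\lambda > 0$ small, perturb the separator by considering $(z + \lambda y_0, \lambda)$. Combining the vertical separating inequality (which gives $\langle z, x - x_0\rangle > c - \langle z, x_0 \rangle$ on $\operatorname{dom}(f)$) with the affine minorant coming from $y_0$ (namely $f(x) \geq \langle y_0, x\rangle - f^\ast(y_0)$) produces, for $\lambda$ small enough, a genuine non-vertical separating affine functional; applying the case $\gamma > 0$ argument to this perturbed functional yields the desired contradiction. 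This concludes $f^{\ast\ast} = f$ and therefore $f^\ast \in \Gamma_0(H)$ is proper.
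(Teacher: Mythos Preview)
The paper does not supply a proof of this statement: the Fenchel--Moreau theorem is quoted in Appendix~\ref{app-subsec-conj} as a classical fact, without argument. Your proposal is the standard textbook proof (as in Br\'ezis or Ekeland--Temam), and its structure is sound: affine minorization via Hahn--Banach to get properness of $f^\ast$, then epigraph separation for the nontrivial inequality $f^{\ast\ast} \geq f$, splitting into the non-vertical and vertical cases.

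One detail to fix in the degenerate case $\gamma = 0$: the perturbation that actually works is $(z - \lambda y_0,\, \lambda)$, not $(z + \lambda y_0,\, \lambda)$. The affine minorant $f(x) \geq \langle y_0, x\rangle - f^\ast(y_0)$ rewrites on $\operatorname{epi}(f)$ as $\langle -y_0, x\rangle + t \geq -f^\ast(y_0)$, so the non-vertical direction associated with $y_0$ is $(-y_0, 1)$. Combining with $\langle z, x\rangle \geq c$ on $\operatorname{dom}(f)$ gives, for every $\lambda > 0$ and every $(x,t) \in \operatorname{epi}(f)$,
\[
\langle z - \lambda y_0,\, x\rangle + \lambda t \;\geq\; c - \lambda f^\ast(y_0),
\]
while at $(x_0,\alpha)$ the left-hand side equals $\langle z, x_0\rangle - \lambda \langle y_0, x_0\rangle + \lambda \alpha$, which is strictly smaller for $\lambda$ small since $c > \langle z, x_0\rangle$. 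With this sign corrected, your $\gamma = 0$ reduction to the $\gamma > 0$ case goes through. (Equivalently, and perhaps more directly, one can test $f^{\ast\ast}(x_0)$ against $y_0 - \mu z$ and let $\mu \to +\infty$ to obtain $f^{\ast\ast}(x_0) = +\infty$.)
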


Analogously to the classical gradient, the subdifferential can be used to study optimality:
\begin{prpstn}[Fermat's rule]
\label{max-property}
Let $f\in \Gamma_0(H)$. $f$ attains a finite global minimum over $H$ in $x^\star$ if and only if 
\begin{equation*}
    0\in \partial f(x^\star).
\end{equation*}
\end{prpstn}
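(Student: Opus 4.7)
The plan is to prove this as a direct consequence of the definition of the subdifferential, which is essentially a tautology once unpacked. Recall that by definition,
\[\partial f(x^\star) = \{p \in H : \forall y \in H,\ f(y) \geq f(x^\star) + \langle p, y-x^\star\rangle\}.\]
Specialising to $p = 0$, the defining inequality reduces to $f(y) \geq f(x^\star)$ for all $y \in H$, which is precisely the statement that $x^\star$ is a global minimiser of $f$.

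For the forward direction, I would assume that $x^\star$ is a global minimiser with $f(x^\star) \in \R$. Then for every $y \in H$, $f(y) \geq f(x^\star) = f(x^\star) + \langle 0, y - x^\star\rangle$, so $0$ satisfies the defining inequality of $\partial f(x^\star)$, giving $0 \in \partial f(x^\star)$.

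For the reverse direction, I would assume $0 \in \partial f(x^\star)$. The definition requires $f(y) \geq f(x^\star) + \langle 0, y - x^\star\rangle = f(x^\star)$ for every $y \in H$. I need to verify that $f(x^\star)$ is finite: since $f \in \Gamma_0(H)$ is proper, there exists some $y_0 \in H$ with $f(y_0) < +\infty$, and the inequality $f(y_0) \geq f(x^\star)$ then forces $f(x^\star) < +\infty$ (and $f(x^\star) > -\infty$ is automatic since $f$ takes values in $\mathopen{]}-\infty, +\infty]$). Thus $f(x^\star)$ is finite and is a lower bound for $f$ on $H$, so $x^\star$ is a finite global minimum.

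There is no real obstacle here; the only subtlety worth flagging is the finiteness issue in the reverse direction, which is handled by invoking properness of $f \in \Gamma_0(H)$. No convex analytic machinery beyond the definition of $\partial f$ is needed, so the proof should be just a few lines.
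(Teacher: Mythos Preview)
Your proof is correct and is the standard argument from the definition of the subdifferential. The paper states this proposition without proof (it is a classical fact recalled in the appendix), so there is no paper proof to compare against; your few-line derivation is exactly what one would expect here.
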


We now list further useful properties of the Fenchel conjugate:
\begin{itemize}
\item multiplication by a real number: for $\alpha \in \R$,
\begin{equation}\label{alpha-conjugate}
(\alpha f)^\ast(y)= \left\{
    \begin{aligned}
        \alpha f^\ast \left(\frac{y}{\alpha}\right) \qquad &\textrm{if} \quad \alpha \neq 0,\\
      \sigma_{\operatorname{dom} (f)}(y) \qquad  &\textrm{if} \quad \alpha=0.
    \end{aligned}\right.\end{equation}
\item the (suitably normalised) squared norm is its own conjugate:
\begin{equation}
    \label{square-conjugate}
    \left(\frac{1}{2} \|\cdot\|_H^2 \right)^\ast  = \frac{1}{2}\|\cdot\|_{H}^2.
\end{equation}
\end{itemize}


Let us also mention a result about composition~\cite{hiriart2006note}.
First, let $f \in \Gamma_0(H)$ and  $g \in \Gamma_0(\R)$ be non-decreasing. Then,
\begin{equation*}
(g\circ f)^\ast(y)= \min_{\alpha \geq 0} \left(g^\ast(\alpha)+\alpha f^\ast\Big(\frac{y}{\alpha}\Big)\right).\end{equation*}
Following \eqref{alpha-conjugate}, the convention for $\alpha = 0$ is $0\, f^\ast\Big(\frac{y}{0}\Big) = \sigma_{\mathrm{dom}(f)}(y).$

\paragraph{Link with the subdifferential.} We now give another characterisation of the subdifferential set, which illustrates the link with convex conjugation: for $f\in \Gamma_0(H)$,
\begin{equation}
\label{subdiff-and-convex-conjugate}
\begin{aligned}
\partial f(x) = \{p \in H, \, \langle p,x \rangle_H - f(x)= f^\ast (p)\}  = \{p\in H, \, \langle x, p\rangle_H -f^\ast(p)=f(x)\}
\end{aligned}
\end{equation}
Essentially, the subdifferential is the set of linear forms on which the convex conjugate is attained.

Using this characterisation, we then get the Legendre-Fenchel identity, which allows us to ``flip'' subdifferentials:
\begin{equation}
    \label{flip-subdiff}
    p\in \partial f(x)
    \iff x\in \partial f^\ast(p), \quad f\in \Gamma_0(H),\; \forall x, p \in H.
\end{equation}

\subsection{Some properties of indicator and support functions}
\label{subsec-app-indic}
Indicator functions are a crucial tool to encode constraints in convex optimisation problems. Their properties are closely linked to topological properties of their indicated sets:
\begin{prpstn}
We have $\delta_C, \sigma_C \in \Gamma_0(H)$ as soon as $C$ is non-empty, convex and closed.
\end{prpstn}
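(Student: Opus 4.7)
The plan is to verify the three defining conditions of $\Gamma_0(H)$ (convexity, lower semicontinuity, properness) for both $\delta_C$ and $\sigma_C$ separately, using the hypotheses that $C$ is non-empty, convex and closed.

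First I would handle $\delta_C$ directly from definitions. For convexity, given $x, y \in H$ and $\lambda \in [0,1]$, if either $x \notin C$ or $y \notin C$ the inequality $\delta_C(\lambda x + (1-\lambda) y) \leq \lambda \delta_C(x) + (1-\lambda)\delta_C(y)$ holds trivially as the right-hand side equals $+\infty$; if both $x,y \in C$, then convexity of $C$ forces $\lambda x + (1-\lambda)y \in C$, so both sides vanish. For lower semicontinuity, I would argue via sublevel sets: for $\alpha < 0$ the set $\{\delta_C \leq \alpha\}$ is empty, and for $\alpha \geq 0$ it equals $C$, which is closed by assumption. Properness follows from non-emptiness of $C$, which yields a point where $\delta_C$ takes the finite value $0$.

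For $\sigma_C$, the cleanest route is to observe that, being defined by $\sigma_C(p) = \sup_{x \in C} \langle p, x \rangle$, it is a pointwise supremum of continuous linear (hence convex and continuous) functions indexed by $x \in C$. Such a supremum is automatically convex and lower semicontinuous on $H$, so the first two properties come for free. For properness, I would again use non-emptiness of $C$: evaluation at $p=0$ gives $\sigma_C(0) = 0$, so $\sigma_C \not\equiv +\infty$, and the supremum is never $-\infty$ because $C$ is non-empty. Alternatively, one could simply invoke that $\sigma_C = \delta_C^\ast$ together with the general fact, recalled in the excerpt, that the Fenchel conjugate of any proper function lies in $\Gamma_0(H)$.

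There is no real obstacle here; the only subtle point worth flagging is that the non-emptiness of $C$ is exactly what rules out the degenerate case $\sigma_C \equiv -\infty$, which would lie outside $\Gamma_0(H)$ since elements of $\Gamma_0(H)$ take values in $\mathopen{]}-\infty, +\infty]$. I would briefly state this explicitly at the end to clarify the role of each hypothesis.
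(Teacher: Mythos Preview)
Your argument is correct and complete. The paper itself states this proposition without proof, treating it as a standard convex-analytic fact being recalled for reference, so there is nothing to compare against; your direct verification of convexity, lower semicontinuity and properness for each function is exactly the kind of elementary justification one would supply if a proof were required.
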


The characterisation \eqref{subdiff-and-convex-conjugate} of the subdifferential yields a useful result on indicator functions:
\begin{prpstn}
\label{prop-indicator-subdiff}
Let  $C\subset H$ be a closed convex set with nonempty interior. Then, for $x\in H$ we have the following:
\[x\in \partial C \quad \iff \quad \partial \delta_C (x) \ \textrm{is a nontrivial cone}.\]
Equivalently, by convex conjugation,
\[\exists p  \neq 0, \; x\in \argmax_{v\in C} \langle v, p\rangle  \ \iff \exists p  \neq 0, \;\ x\in \partial \sigma_C (p) \ \iff \ x\in \partial C.\]
\end{prpstn}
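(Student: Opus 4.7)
The plan is to prove the first equivalence $x \in \partial C \Longleftrightarrow \partial \delta_C(x)$ is a nontrivial cone directly from the definition of subdifferential, using the supporting hyperplane theorem for the nontrivial direction. The second formulation will then follow as an essentially formal consequence of the Legendre-Fenchel identity \eqref{flip-subdiff} combined with $\sigma_C = \delta_C^\ast$ and the characterisation \eqref{subdiff-and-convex-conjugate}.

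For the first equivalence, I start by unpacking $\partial \delta_C(x)$. The definition gives $\partial \delta_C(x) = \{p \in H : \langle p, v - x\rangle \leq 0 \text{ for all } v \in C\}$ when $x \in C$ (and is empty otherwise). This set is clearly stable under multiplication by nonnegative scalars, so it is always a convex cone. I would then handle three cases: (a) if $x \notin C$, then $\partial \delta_C(x) = \emptyset$ is not a nontrivial cone; (b) if $x \in \mathrm{int}(C)$, then for any $p \in \partial \delta_C(x)$ and any direction $h \in H$, the inclusion $x + t h \in C$ holds for small $|t|$, so $\langle p, t h\rangle \leq 0$ for both signs of $t$, forcing $p = 0$, hence $\partial \delta_C(x) = \{0\}$, again trivial; (c) if $x \in \partial C$, then since $\mathrm{int}(C) \neq \emptyset$, the Hahn-Banach supporting hyperplane theorem provides a nonzero $p \in H$ with $\langle p, v\rangle \leq \langle p, x\rangle$ for every $v \in C$, i.e.\ $p \in \partial \delta_C(x) \setminus \{0\}$, so $\partial \delta_C(x)$ is a nontrivial cone. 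The three cases cover $H$ and yield the equivalence.

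For the second line, note that $\delta_C \in \Gamma_0(H)$ because $C$ is nonempty, closed and convex, so its conjugate is $\sigma_C = \delta_C^\ast$. The Legendre-Fenchel identity \eqref{flip-subdiff} then reads
\[
p \in \partial \delta_C(x) \iff x \in \partial \sigma_C(p).
\]
Taking $p \neq 0$ on both sides and combining with the first equivalence already established gives the chain $x \in \partial C \iff \exists\, p \neq 0,\ x \in \partial \sigma_C(p)$. Finally, using \eqref{subdiff-and-convex-conjugate} applied to $\sigma_C$, the condition $x \in \partial \sigma_C(p)$ is exactly $\langle p, x\rangle - \sigma_C(p) = \sigma_C^\ast(x) = \delta_C(x)$, which (being finite) forces $x \in C$ together with $\langle p, x\rangle = \sigma_C(p) = \sup_{v \in C} \langle v, p\rangle$, i.e.\ $x \in \argmax_{v \in C} \langle v, p\rangle$. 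This gives the third characterisation and completes the equivalence.

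The main (modest) obstacle is the case $x \in \partial C$, which is where the hypothesis $\mathrm{int}(C) \neq \emptyset$ is actually used, via the supporting hyperplane theorem to produce a nonzero supporting functional; everything else is bookkeeping with the definitions and the standard identities of convex analysis recalled in Appendix~\ref{app-convex-analysis}.
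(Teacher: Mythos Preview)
The paper does not actually give a proof of this proposition; it is stated as a standard convex-analysis fact, with only the lead-in sentence indicating that the characterisation \eqref{subdiff-and-convex-conjugate} ``yields'' the result. Your argument is correct and is exactly the expected one: the three-case analysis of the normal cone $\partial\delta_C(x)$ (empty off $C$, $\{0\}$ on the interior, nontrivial on the boundary via the supporting hyperplane theorem, which is where the hypothesis $\mathrm{int}(C)\neq\emptyset$ enters), followed by the Legendre--Fenchel identity \eqref{flip-subdiff} and \eqref{subdiff-and-convex-conjugate} to pass to $\sigma_C$ and to the $\argmax$ formulation. There is nothing to correct.
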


\paragraph{Indicator function of a ball in a Hilbert space.} Consider the closed unit ball $\overline{B}(0,1)$ of $H$. We have seen before that 
\[ \sigma_{\overline{B}(0,1)}(y) = \big(\delta_{\overline{B}(0,1)}\big)^\ast(y)  = \|y\|_H.\]
Using \eqref{subdiff-and-convex-conjugate}, we get the following: for $x\in \overline{B}(0,1)$,
\begin{equation*}
    \partial \delta_{\overline{B}(0,1)}(x)=\{p\in H, \quad \langle p,x\rangle_H=\sigma_{\overline{B}(0,1)}( p)\}=\{p\in H, \quad \langle p,x\rangle_H=\|p\|_H\}.
\end{equation*}
From the Cauchy-Schwarz inequality we know that 
$\langle p, x \rangle_H \leq \|p\|_H \|x\|_H,$
it follows that 
$\langle p, x \rangle_H =\|p\|_H$ if and only if 
$x=\textstyle{\frac{p}{\|p\|_H}}$.
This implies that
\begin{equation}
\label{subdiff-indic}
    \partial \delta_{\overline{B}(0,1)}(x)=\left\{
    \begin{array}{cl}
        \{0\} &\qquad\textrm{if} \quad \|x\|_H<1,\\
        \{\lambda x , \quad \lambda \geq 0 \} &\qquad \textrm{if} \quad \|x\|_H=1.
    \end{array}
\right.
\end{equation}

\subsection{Technical lemmas}
\label{app-subsec-lemmas}
\begin{lmm}\label{lem-integral}
Let $f \in \Gamma_0(H)$ be such that \[F : u \in L^2(0,T;H) \longmapsto \int_0^T f(u(t)) \,dt,\] is well-defined and proper. Then $F \in \Gamma_0(L^2(0,T;H))$, and its Fenchel conjugate and subdifferential are given by
\[\forall p \in L^2(0,T;H), \quad F^\ast(p) = \int_0^T f^\ast(p(t))\,dt,\]
\[\partial F(u) = \left\{p\in L^2(0,T;H), \quad p(t)\in \partial f(u(t)), \ \text{for a.e. } t\in (0,T)\right\}, \quad \forall u\in L^2(0,T;H).\]
\end{lmm}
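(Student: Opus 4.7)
The plan is to proceed in three stages: first verify that $F \in \Gamma_0(L^2(0,T;H))$, then establish the formula for $F^\ast$, and finally deduce the subdifferential identity from the Legendre--Fenchel characterisation \eqref{subdiff-and-convex-conjugate}.

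For the first stage, convexity of $F$ is pointwise inherited from $f$, and properness is assumed. For lower semicontinuity, given $u_n \to u$ strongly in $L^2(0,T;H)$, extract a subsequence converging almost everywhere to $u$. Since $f \in \Gamma_0(H)$, there exist $p_0 \in H$ and $c \in \R$ with $f(x) \geq \langle p_0, x \rangle_H + c$ for all $x$. Then the functions $g_n(t) := f(u_n(t)) - \langle p_0, u_n(t)\rangle_H - c$ are nonnegative, and the lsc of $f$ yields $\liminf_n g_n(t) \geq g(t) := f(u(t)) - \langle p_0, u(t)\rangle_H - c$ almost everywhere. Fatou's lemma, combined with the linearity of $u \mapsto \int_0^T \langle p_0, u(t)\rangle_H\,dt$ under strong $L^2$-convergence, gives $F(u) \leq \liminf_n F(u_n)$.

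For the second stage, the inequality $F^\ast(p) \leq \int_0^T f^\ast(p(t))\,dt$ follows directly from the pointwise Young--Fenchel inequality $\langle p(t), u(t)\rangle_H - f(u(t)) \leq f^\ast(p(t))$, integrated in $t$. The reverse inequality is the delicate step: given $p \in L^2(0,T;H)$ with $\int_0^T f^\ast(p(t))\,dt < +\infty$, I need to construct $u_n \in L^2(0,T;H)$ with $\int_0^T \bigl( \langle p(t), u_n(t)\rangle_H - f(u_n(t))\bigr) dt \to \int_0^T f^\ast(p(t))\,dt$. This is achieved by a measurable selection argument: in the separable setting relevant here, one writes $f^\ast(p(t)) = \sup_k \bigl(\langle p(t), x_k\rangle_H - f(x_k)\bigr)$ for a suitable countable family $\{x_k\} \subset \operatorname{dom}(f)$, selects a measurable almost-maximiser index $k_n(t)$, and truncates on sets where the corresponding $u_n$ would fail to be in $L^2$; passing to the limit via monotone convergence yields the desired equality.

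For the third stage, I use the characterisation $p \in \partial F(u) \iff F(u) + F^\ast(p) = \langle p, u\rangle_{L^2(0,T;H)}$ from \eqref{subdiff-and-convex-conjugate}, and analogously $q \in \partial f(x) \iff f(x) + f^\ast(q) = \langle q, x\rangle_H$. If $p(t) \in \partial f(u(t))$ a.e., integrating the pointwise identities and using the conjugate formula from stage two immediately gives $p \in \partial F(u)$. Conversely, if $p \in \partial F(u)$, then $F(u) + F^\ast(p) - \langle p, u\rangle_{L^2} = 0$; since this integral equals $\int_0^T \bigl( f(u(t)) + f^\ast(p(t)) - \langle p(t), u(t)\rangle_H \bigr) dt$ and the integrand is nonnegative a.e. by Young--Fenchel, pointwise equality holds a.e., giving $p(t) \in \partial f(u(t))$ almost everywhere.

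The main obstacle is the measurable selection in the reverse inequality of stage two, especially the need to ensure $L^2$-integrability of the approximating sequence; the remaining ingredients (pointwise Young--Fenchel, Fatou's lemma, and the subdifferential--conjugate duality) are routine once the conjugate formula is in place.
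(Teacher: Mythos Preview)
Your proposal is correct and follows essentially the same three-stage structure as the paper's proof: lower semicontinuity via a.e.-convergent subsequence and Fatou, the conjugate formula by interchanging $\sup$ and $\int$, and the subdifferential via the Legendre--Fenchel identity~\eqref{subdiff-and-convex-conjugate}. In fact you are more careful than the paper on two points: you introduce the affine minorant $f(x) \geq \langle p_0, x\rangle_H + c$ so that Fatou's lemma applies to a nonnegative integrand (the paper applies Fatou directly without this adjustment), and you explicitly flag the measurable selection needed for the reverse inequality in the conjugate formula, whereas the paper simply writes $\sup_{u \in L^2(0,T;H)} \int_0^T(\cdots)\,dt = \int_0^T \sup_{u \in H}(\cdots)\,dt$ as a one-line formal computation.
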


\begin{proof}
Since $F$ is obviously convex, we only need to justify that $F$ is lsc to infer $F  \in \Gamma_0(L^2(0,T;H))$. 
We let $u_n \rightarrow u$ be in $L^2(0,T;H)$ and must show that $F(u)\leq \liminf F(u_n)$. 
Upon extraction of a subsequence, we may assume that $F(u_n) \rightarrow \liminf F(u_n)$, and that $u_n(t) \rightarrow u(t)$ in $H$ for a.e. $t \in (0,T)$. 
Then, using successively the lsc of $f$ and Fatou's lemma, we find
\[F(u) = \int_0^T f(u(t))\,dt \leq \int_0^T \liminf f(u_n(t))\,dt \leq\liminf \int_0^T  f(u_n(t))\,dt = \liminf F(u_n).\]
For $p \in L^2(0,T;H)$, we compute
\[
F^\ast(p) = \sup_{u \in L^2(0,T;H)} \langle p, u\rangle_{ L^2(0,T;H)} - \int_0^T f(u(t)) \, dt  
= \int_0^T \left(\sup_{u \in H} \langle p(t), u\rangle_H -f(u(t))\right) \,dt = \int_0^T f^\ast(p(t))\,dt.
\]

Using the characterisation given in \eqref{subdiff-and-convex-conjugate}, and Lemma~\ref{lem-integral}, we have the following:
\begin{equation*}
    \begin{aligned}
    \partial F(u)&=\argmax_{p\in L^2(0,T;H)} \{\langle p,u \rangle-F^\ast(p)\}\\
    &=\argmax_{p\in L^2(0,T;H)} \left\{\int_0^T \langle p(t),u(t)\rangle dt-\int_0^T f^\ast(p(t))dt\right\} \\
    &= \argmax_{p\in L^2(0,T;H)} \left\{\int_0^T \left( \langle p(t),u(t)\rangle - f^\ast(p(t)) \right)dt\right\} \\
    &=\left\{p\in L^2(0,T;H), \quad p(t)\in \argmax_{p\in H}\{\langle p , u(t)\rangle - f^\ast(p)\}\right\},
    \end{aligned}
\end{equation*}
and the result follows by the same characterisation of the subdifferential set $\partial f(u(t))$.
\end{proof}

\subsection{Fenchel-Rockafellar duality}\label{app-subsec-fenchel}
Let $E$ and $F$ be two Hilbert spaces. Let $f$ and $g$ be functions in $\Gamma_0(E)$ and $\Gamma_0(F)$, respectively, and $A : E \rightarrow F$ be a bounded operator. 
Consider the (primal) optimisation problem 
\begin{equation}\label{abstract-primal} \pi = \inf_{ x \in E} \left(f(x) + g(Ax)\right).\tag{$\mathcal{C}$}\end{equation}
and its dual problem 
\begin{equation}\label{abstract-dual} d= \sup_{z \in F}\left(- f^\ast(A^\ast z) - g^\ast(-z)\right) = -\inf_{ z \in F}\left(f^\ast(A^\ast z) + g^\ast(-z)\right) \tag{$\mathcal{D}$}
\end{equation}
With the above notations, weak duality always holds, \ie we always have $\pi\geq d$.
The Fenchel-Rockafellar theorem states when and how the strong duality holds, \ie when $d=\pi$~\cite{Rockafellar1967}.
\begin{thrm}
If there exists $\bar x \in E$ such that $g$ is continuous at $A\bar x$ and $f(\bar x) < +\infty$, then
\[\pi = d \quad  \text{and} \quad d \text{ is attained \textit{if finite}}.\]
Symmetrically, if there exists $\bar z \in F$ such that $f^\ast$ is continuous at $A^\ast\bar z$ and $g^\ast(-\bar z)<+\infty$, then 
\[d=\pi \quad  \text{and} \quad \pi \text{ is attained \textit{if finite}}.\]
\end{thrm}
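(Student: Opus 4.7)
The plan is to obtain essentially all claims about $T^\star$ as direct consequences of the equivalence $\Pi(T^\star(\lambda)) = \lambda$ from Proposition~\ref{prop-norm-time-equiv}, combined with the properties of $\Pi$ already established: continuity (Proposition~\ref{prop-M-continuous}), strict monotonicity on $(0,T_\ell)$ (from the previous proposition), and the boundary behaviour $\Pi(T) \to +\infty$ as $T \to 0^+$ (Corollary~\ref{prop-M(t)-limits}) together with $\Pi(T) \to \mu_-$ as $T \to +\infty$ (by definition of $\mu_-$). On $(0,T_\ell)$, these properties make $\Pi$ a continuous strictly decreasing bijection onto $(\mu_-,+\infty)$, and I would identify $T^\star$ as its inverse on that range.

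I would first establish finiteness and monotonicity. For $\lambda > \mu_-$, pick $T_0$ large enough that $\Pi(T_0) < \lambda$; the existence of an optimal control for~\eqref{norm-optimal-control-problem} at time $T_0$ provides an admissible competitor for the time optimal problem, hence $T^\star(\lambda) \leq T_0 < +\infty$. The same witness argument gives monotonicity: for $\mu_- < \lambda_1 \leq \lambda_2$, any admissible pair $(T,u)$ for $\lambda_1$ is admissible for $\lambda_2$, whence $T^\star(\lambda_1) \geq T^\star(\lambda_2)$; combined with $\Pi(T^\star(\lambda))=\lambda$ and strict monotonicity of $\Pi$, this upgrades to strict decrease of $T^\star$.

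For the two boundary limits, given any $T_0 \in (0, T_\ell)$ the witness argument shows $T^\star(\lambda) \leq T_0$ whenever $\lambda \geq \Pi(T_0)$, while the equivalence $T^\star(\Pi(T_0)) = T_0$ together with monotonicity shows $T^\star(\lambda) \geq T_0$ whenever $\mu_- < \lambda \leq \Pi(T_0)$. Letting $T_0 \to 0^+$ and $T_0 \to T_\ell^-$ then yields $T^\star(\lambda) \to 0$ as $\lambda \to +\infty$, and $T^\star(\lambda) \to T_\ell$ as $\lambda \to \mu_-^+$; this matches the stated limit $T^\star(\lambda) \to +\infty$ in the case $T_\ell = +\infty$, which is the relevant regime here. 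Continuity of $T^\star$ on $(\mu_-, +\infty)$ is then automatic since $T^\star = \Pi^{-1}$ on that interval and $\Pi$ is continuous and strictly monotone, and the domain identification follows by excluding $\lambda \leq \mu_-$ (no $T$ satisfies $\Pi(T) \leq \lambda$ in that regime).

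For the final part, fix $\lambda > \mu_-$ and set $T := T^\star(\lambda)$, so that $\Pi(T) = \lambda$ by Proposition~\ref{prop-norm-time-equiv}. Under the extra hypotheses on $A^\ast$, Theorem~\ref{thm-controllability} yields a unique optimal control $u^\star \in \U^L_\textrm{shape}$ for~\eqref{ocp-primal} in time $T$, satisfying $\cost(u^\star) = \lambda$ and reaching $\overline{B}(\tar, \e)$; it is therefore admissible for~\eqref{time-optimal-control-problem}, and minimal time since $T = T^\star(\lambda)$. Conversely, any minimal time control $u$ satisfies $\cost(u) \leq \lambda = \Pi(T)$ and steers the system to the target ball in time $T$, so it is itself a primal optimum for~\eqref{ocp-primal} at time $T$; uniqueness in Theorem~\ref{thm-controllability} forces $u = u^\star$. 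The main subtle point throughout is the behaviour near $\mu_-$: the stated limit genuinely requires $T_\ell = +\infty$, so any clean write-up must either verify this under the comparison principle or restrict attention to that regime.
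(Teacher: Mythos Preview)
Your proposal does not address the stated theorem at all. The statement you were asked to prove is the abstract Fenchel--Rockafellar duality theorem: given $f\in\Gamma_0(E)$, $g\in\Gamma_0(F)$ and a bounded operator $A:E\to F$, under the qualification condition one has $\pi=d$ and the appropriate problem is attained. Your write-up instead proves Theorem~\ref{thm-existence-time-optimal-controls} on the minimal time function $T^\star(\lambda)$: you discuss $\Pi$, $\overline{M}$, $\mu_-$, $T_\ell$, the comparison principle, and the uniqueness of time-optimal controls via Theorem~\ref{thm-controllability}. None of these objects appear in, or are relevant to, the Fenchel--Rockafellar statement.

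In the paper, this theorem is not proved but quoted from~\cite{Rockafellar1967}; the only argument given is the one-line observation that the second (``symmetric'') part follows from the first by viewing the dual problem~\eqref{abstract-dual} as a primal problem and~\eqref{abstract-primal} as its dual. A correct submission here would either cite the result as the paper does, or supply a standard proof (perturbation function, separation argument, etc.). As it stands, your text is a reasonable sketch of the proof of Theorem~\ref{thm-existence-time-optimal-controls}, but it is attached to the wrong statement and should be moved there; for the present theorem there is nothing.
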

The second part of the theorem is obtained by applying the first part to \eqref{abstract-dual}, $\inf_{ z \in F}\left(f^\ast(A^\ast z) + g^\ast(-z)\right),$
seen as a primal problem, and \eqref{abstract-primal}, rewritten as
$\sup_{ x \in E} \left(-f(x) - g(Ax)\right),$
seen as its dual problem.
This yields
$-d \geq -\pi,$
with equality under the corresponding assumptions.

\paragraph{Lagrangian and saddle-point interpretation.} Let us now define the Lagrangian for $(x,y) \in E \times F$ by
\[\mathcal L(x,y) := \langle y, A x\rangle + f(x) - g^\ast(y). \]
If $(x^\star, y^\star)$ is a saddle point of the Lagrangian, \ie
\begin{equation*}
x^\star \in \argmin_{x \in E} \mathcal{L}(x, y^\star) \; \text{ and } \; y^\star \in \argmax_{y \in F} \mathcal{L}(x^\star, y),\end{equation*}
then $(x^\star, z^\star)$ (with $z^\star = -y^\star$) is a pair of primal and dual optimal variables, and strong duality holds.

What matters is the converse: if $(x^\star, z^\star)$ is a pair of primal and dual optimal variables and if strong duality holds, then $(x^\star, y^\star)$ (with $y^\star = -z^\star$) is a saddle point of $\mathcal L$.


Whenever $(x^\star, y^\star)$ is a primal-dual optimal pair, Fermat's rule and the Legendre-Fenchel identity yield 
\[x^\star \in \argmin_{x \in E} \mathcal{L}(x, y^\star) \quad  \iff \quad -A^\ast y^\star \in \partial f(x^\star)  \quad  \iff \quad x^\star \in \partial f^\ast(-A^\ast y^\star), \]
as well as
\[y^\star \in \argmax_{y \in F} \mathcal{L}(x^\star, y) \quad  \iff \quad  A x^\star  \in   \partial g^\ast(y^\star)  \quad  \iff \quad y^\star \in \partial g(Ax^\star), \]
Summing up, we have the following proposition:
\begin{prpstn}\label{saddle-point-stuff-weak}
Let $(x^\star, z^\star)$ be a pair of primal and dual optimal variables. If strong duality holds, then
\begin{equation}\label{saddle-x-identities}
    x^\star \in \partial f^\ast(A^\ast z^\star), \quad Ax^\star \in \partial g^\ast(-z^\star),
\end{equation}
\begin{equation*}
    z^\star \in -\partial g(Ax^\star), \quad A^\ast z^\star \in \partial f(x^\star).
\end{equation*}
\end{prpstn}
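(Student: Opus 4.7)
The plan is to deduce all the statements about $T^\star$ by reading off the behavior of $\Pi$ (equivalently, of $\overline{M}$) through the equivalence established in Proposition~\ref{prop-norm-time-equiv}, which essentially says that $T^\star$ and $\Pi$ are inverse to one another on the interval where $\Pi$ is strictly decreasing. Since $\Pi$ is continuous (Proposition~\ref{prop-M-continuous}), non-increasing (Corollary~\ref{decreasing}), strictly decreasing on $(0,T_\ell)$ and constant equal to $\mu_-$ on $[T_\ell,+\infty)$ (by the preceding proposition), the proof amounts to reading off properties of the pseudo-inverse of $\Pi$ restricted to $(0,T_\ell)$.

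First, for $\lambda>\mu_-$, I would use the intermediate value theorem applied to $\Pi$ on $(0,T_\ell)$: since $\Pi$ is continuous with $\Pi(T)\to+\infty$ as $T\to 0$ (by Corollary~\ref{prop-M(t)-limits}) and $\Pi(T)\to \mu_-$ as $T\to T_\ell^-$, there exists some $T\in(0,T_\ell)$ with $\Pi(T)=\lambda$, and by Proposition~\ref{prop-norm-time-equiv} this yields $T^\star(\lambda)=T<+\infty$, together with $\Pi(T^\star(\lambda))=\lambda$. The two limits then follow directly: if $\lambda\to+\infty$, the identity $\Pi(T^\star(\lambda))=\lambda$ forces $T^\star(\lambda)\to 0$ (otherwise along a subsequence $T^\star(\lambda_n)$ would stay bounded away from $0$, and by monotonicity of $\Pi$ we would get $\lambda_n=\Pi(T^\star(\lambda_n))$ bounded, a contradiction); similarly for $\lambda\to\mu_-^+$, the identity forces $T^\star(\lambda)\to T_\ell$ (which is $+\infty$ in the generic case $T_\ell=+\infty$ singled out in the statement).

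For the monotonicity and continuity on $(\mu_-,+\infty)$, I would argue directly from $T^\star=\Pi^{-1}$ on $(0,T_\ell)$: the strict decrease of $\Pi$ on this interval makes $T^\star$ well-defined as its inverse, which is then automatically strictly decreasing; continuity follows from the continuity of $\Pi$ via the standard argument that a strictly monotone inverse of a continuous function on an interval is itself continuous. For the boundary of the domain, I would observe that for $\lambda\leq\mu_-$, any competitor $u\in E$ at any time $T>0$ satisfies $\cost(u)\geq\Pi(T)\geq \mu_-$, so the constraint $\cost(u)\leq\lambda<\mu_-$ is infeasible (and for $\lambda=\mu_-$ only asymptotically feasible when $T_\ell=+\infty$), which pins the domain at $(\mu_-,+\infty)$.

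For the final statement, under the hypotheses that $A^\ast$ satisfies~\eqref{main_property} and $\partial_t-A^\ast$ is analytic-hypoelliptic, Theorem~\ref{thm-controllability} provides a unique optimal control $u^\star_T$ for~\eqref{ocp-primal} at every time $T>0$, lying in $\U^L_{\textrm{shape}}$. Setting $T=T^\star(\lambda)$, the plan is to prove saturation: any admissible control $u$ for~\eqref{time-optimal-control-problem} at time $T^\star(\lambda)$ satisfies $\cost(u)\leq\lambda$, while by definition of $\Pi$ one has $\cost(u)\geq\Pi(T^\star(\lambda))=\lambda$, forcing $\cost(u)=\lambda$ and hence $u$ to be an optimal control for~\eqref{ocp-primal} at time $T^\star(\lambda)$. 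Uniqueness of the latter (Theorem~\ref{thm-controllability}, via Proposition~\ref{prop-uniqueness-p-u}) then transfers to uniqueness of the minimal time control, and its membership in $\U^L_{\textrm{shape}}$ is inherited from the same theorem. The main obstacle I anticipate is handling cleanly the ambiguity around $T_\ell<+\infty$ versus $T_\ell=+\infty$ for the limit at $\mu_-$; the statement is tailored to the first-case behavior, but one should emphasize that in either case $T^\star(\lambda)\to T_\ell\in(0,+\infty]$, and restrict the strictly decreasing regime to the open interval $(\mu_-,+\infty)$.
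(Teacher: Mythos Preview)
Your proposal addresses the wrong statement. The proposition in question is Proposition~\ref{saddle-point-stuff-weak}, an abstract convex-analytic fact about Fenchel--Rockafellar duality: given a primal optimal $x^\star$ and a dual optimal $z^\star$ with strong duality, one must derive the four subdifferential inclusions $x^\star\in\partial f^\ast(A^\ast z^\star)$, $Ax^\star\in\partial g^\ast(-z^\star)$, $z^\star\in-\partial g(Ax^\star)$, $A^\ast z^\star\in\partial f(x^\star)$. Your argument instead establishes properties of the minimal-time function $T^\star$ and its relation to $\Pi$, which is the content of Theorem~\ref{thm-existence-time-optimal-controls}. None of the objects you manipulate ($\Pi$, $\overline{M}$, $T^\star$, $T_\ell$, $\mu_-$, $\U^L_{\textrm{shape}}$) appear in the statement you were asked to prove, and conversely you never touch $f$, $g$, $A$, or the optimality conditions.

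For the record, the paper's proof of Proposition~\ref{saddle-point-stuff-weak} proceeds via the Lagrangian $\mathcal L(x,y)=\langle y,Ax\rangle+f(x)-g^\ast(y)$: strong duality together with optimality of $(x^\star,z^\star)$ forces $(x^\star,-z^\star)$ to be a saddle point of $\mathcal L$, and then Fermat's rule applied to each of the two partial optimisations, combined with the Legendre--Fenchel identity~\eqref{flip-subdiff}, yields the four inclusions. An equivalent direct route is to expand the equality $f(x^\star)+g(Ax^\star)=-f^\ast(A^\ast z^\star)-g^\ast(-z^\star)$ and use the Fenchel--Young inequality termwise to force both pairs to saturate, which by~\eqref{subdiff-and-convex-conjugate} gives exactly the claimed subdifferential relations.
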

Reinterpreting the Fenchel-Rockafellar theorem with the above and in a way that is useful for controllability issues, we end up with 
\begin{prpstn}\label{saddle-point-stuff}
Under the assumption that there exists $\bar x \in E$ such that $g$ is continuous at $A\bar x$ and $f(\bar x)<+\infty$, if $\pi$ is finite, and attained at $x^\star \in E$, then $d$ is attained at $z^\star\in F$ satisfying
\begin{equation}\label{primal-to-dual-variable}
     z^\star \in -\partial g(Ax^\star), \quad A^\ast z^\star \in \partial f(x^\star).
\end{equation}
Conversely,  if $(x^\star, z^\star)$ satisfies~\eqref{primal-to-dual-variable}$, (x^\star, z^\star)$ is a pair of primal and dual optimal variables.

Similarly, under the assumption that there exists $\bar z \in E$ such that $f^\ast$ is continuous at $A^\ast\bar z$ and $g^\ast(-\bar z)<+\infty$, if $d$ is finite, and attained at $z^\star \in F$, then $\pi$ is attained at $x^\star\in E$ satisfying
\begin{equation}\label{dual-to-primal-variable}
    x^\star \in \partial f^\ast(A^\ast z^\star), \quad Ax^\star \in \partial g^\ast(-z^\star).
\end{equation}
Conversely,  if $(x^\star, z^\star)$ satisfies~\eqref{dual-to-primal-variable}$, (x^\star, z^\star)$ is a pair of primal and dual optimal variables.
\end{prpstn}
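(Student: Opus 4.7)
The plan is to combine the Fenchel-Rockafellar theorem (both parts, as stated in the excerpt immediately above the statement) with Proposition \ref{saddle-point-stuff-weak}, which already gives the subdifferential relations for any primal-dual optimal pair under strong duality. The direct implications in Proposition \ref{saddle-point-stuff} are then essentially immediate. The converses will be obtained by recognising that \eqref{primal-to-dual-variable} and \eqref{dual-to-primal-variable} are exactly the saddle-point conditions for the Lagrangian $\mathcal{L}(x,y) := \langle y, Ax\rangle + f(x) - g^\ast(y)$, and then running the standard min-max computation backwards.

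First I would handle the direct implication in the first half. Under the qualification hypothesis (existence of $\bar{x}$ with $g$ continuous at $A\bar x$ and $f(\bar x)<+\infty$), the Fenchel-Rockafellar theorem gives $\pi=d$, with $d$ attained whenever finite. Since $\pi$ is assumed finite and attained at $x^\star$, $d=\pi$ is finite and attained at some $z^\star$. Then $(x^\star,z^\star)$ is a primal-dual optimal pair and strong duality holds, so Proposition \ref{saddle-point-stuff-weak} immediately yields \eqref{primal-to-dual-variable}. The direct implication in the second half is entirely symmetric, using the second statement of the Fenchel-Rockafellar theorem together with Proposition \ref{saddle-point-stuff-weak}.

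For the converses, I would show that \eqref{primal-to-dual-variable} alone (no qualification hypothesis needed) forces $(x^\star,z^\star)$ to be primal-dual optimal. Set $y^\star:=-z^\star$. The condition $A^\ast z^\star\in\partial f(x^\star)$ reads $0\in A^\ast y^\star+\partial f(x^\star)$, which by Fermat's rule means $x^\star\in\argmin_x\mathcal{L}(x,y^\star)$. The condition $z^\star\in-\partial g(Ax^\star)$ rewrites, via the Legendre-Fenchel identity \eqref{flip-subdiff}, as $Ax^\star\in\partial g^\ast(y^\star)$, i.e. $0\in -Ax^\star+\partial g^\ast(y^\star)$, which is Fermat's rule for $y^\star\in\argmax_y\mathcal{L}(x^\star,y)$. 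Hence $(x^\star,y^\star)$ is a saddle point of $\mathcal{L}$. Using $\sup_y\mathcal{L}(x,y)=f(x)+g^{\ast\ast}(Ax)=f(x)+g(Ax)$ (Fenchel-Moreau) and $\inf_x\mathcal{L}(x,y)=-f^\ast(-A^\ast y)-g^\ast(y)$, the saddle-point identity $\sup_y\inf_x\mathcal{L}=\mathcal{L}(x^\star,y^\star)=\inf_x\sup_y\mathcal{L}$ then gives $\pi=\mathcal{L}(x^\star,y^\star)=d$, so $(x^\star,z^\star)$ is indeed primal-dual optimal with strong duality. The converse for the second half follows by reading exactly the same chain of equivalences in the other direction: \eqref{dual-to-primal-variable} is again the saddle-point property of $\mathcal{L}$ at $(x^\star,-z^\star)$, just re-expressed through the Legendre-Fenchel identity applied on the other side.

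There is no serious obstacle here: the argument amounts to unpacking the Lagrangian characterisation sketched in the excerpt just before Proposition \ref{saddle-point-stuff-weak}. The one point worth care is to notice that the converses do not require the Moreau-Rockafellar-type qualification hypotheses appearing in the direct implications; they rely only on Fenchel-Moreau (used to rewrite $g^{\ast\ast}=g$ in $\sup_y\mathcal{L}$) and on \eqref{flip-subdiff}, both of which are valid on all of $\Gamma_0(E)$ and $\Gamma_0(F)$.
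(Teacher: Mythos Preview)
Your proposal is correct and follows essentially the same approach as the paper. The paper does not give a separate proof of Proposition~\ref{saddle-point-stuff}; it presents the result as a direct reinterpretation of the Fenchel--Rockafellar theorem combined with the Lagrangian/saddle-point discussion and the equivalences displayed just before Proposition~\ref{saddle-point-stuff-weak}, and your argument simply unpacks that sketch in full detail. Your observation that the converses do not require the qualification hypotheses is a useful clarification, consistent with the paper's earlier remark that any saddle point of $\mathcal{L}$ yields a primal-dual optimal pair with strong duality.
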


\subsection{Parametric convex optimisation}

\begin{lmm}
\label{parametric_optimisation}
Let $H$ be a Hilbert space, $Z$ be a metric space, $f : H \times Z \rightarrow \R\cup{\{+\infty\}}$.
Assume that 
\begin{itemize}
    \item $\forall \alpha \in Z$, $f(\cdot,\alpha)$ is convex on $H$,
     \item $\forall x \in H$, $f(x,\cdot)$ is continuous on $Z$,
    \item $f$ is sequentially weak-strong lower semicontinuous on $H \times Z$, \ie 
    \[\forall  x_n \rightharpoonup x, \; \forall \alpha_n \rightarrow \alpha, \quad f(x,\alpha) \leq \liminf_{n\rightarrow +\infty} f(x_n , \alpha_n),\]
    \item there exists a unique $x_\alpha \in H$ such that $\inf_{x \in H} f(x,\alpha) = f(x_\alpha, \alpha)$.
\end{itemize}
Then the mapping 
\[\alpha \in Z \longmapsto \inf_{x \in H} f(x,\alpha)\] is continuous on $Z$.
\end{lmm}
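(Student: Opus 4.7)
The plan is to prove continuity at any fixed $\alpha \in Z$ by establishing upper and lower semicontinuity of the value function $v(\alpha) := \inf_{x \in H} f(x,\alpha) = f(x_\alpha, \alpha)$ separately. I would fix a sequence $\alpha_n \to \alpha$ in $Z$ and aim to show $v(\alpha_n) \to v(\alpha)$.

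The upper semicontinuity is the easy half: since $x_\alpha \in H$ is admissible at every parameter, the definition of the infimum gives
\[
v(\alpha_n) \;\leq\; f(x_\alpha, \alpha_n),
\]
and the continuity of $f(x_\alpha, \cdot)$ (second hypothesis) then yields $f(x_\alpha, \alpha_n) \to f(x_\alpha, \alpha) = v(\alpha)$, so $\limsup_{n\to\infty} v(\alpha_n) \leq v(\alpha)$.

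For the lower semicontinuity, I would pass to a subsequence (not relabelled) realising $\liminf_{n\to\infty} v(\alpha_n) =: \ell$, dispose of the trivial case $\ell = +\infty$, and then look at the associated minimisers $x_{\alpha_n} = \argmin_x f(\cdot,\alpha_n)$, which satisfy $f(x_{\alpha_n}, \alpha_n) = v(\alpha_n) \to \ell$. Using the upper semicontinuity established above, one has $v(\alpha_n) \leq v(\alpha) + 1$ eventually, so the values $f(x_{\alpha_n},\alpha_n)$ stay uniformly bounded. Assuming one can argue that the minimiser sequence $(x_{\alpha_n})$ is bounded in $H$, reflexivity yields a weakly convergent subsequence $x_{\alpha_n} \rightharpoonup x^\star$; applying the weak-strong lower semicontinuity of $f$ on $H \times Z$ then gives
\[
f(x^\star, \alpha) \;\leq\; \liminf_{n \to \infty} f(x_{\alpha_n}, \alpha_n) \;=\; \ell,
\]
and since $v(\alpha) \leq f(x^\star, \alpha)$, we conclude $v(\alpha) \leq \ell$ as required. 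Uniqueness of the minimiser at $\alpha$ moreover forces $x^\star = x_\alpha$, giving convergence (along the subsequence) of the minimisers themselves.

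The main obstacle is precisely the boundedness step for $(x_{\alpha_n})$: the hypotheses as stated do not explicitly encode a coercivity condition, and in infinite dimension convexity plus lower semicontinuity plus uniqueness of the minimiser do not in general make sub-level sets bounded. To handle this, I would either invoke a coercivity property that the target application (the dual functional $J_{T,\e}$, which is coercive by Proposition~\ref{prop-coercive}) supplies, or exploit convexity more finely by introducing convex combinations $\bar{x}_n = x_\alpha + t_n(x_{\alpha_n}-x_\alpha)$ with $t_n \to 0$ chosen so that $(\bar{x}_n)$ lies in a bounded set, and then use uniqueness of the minimiser at $\alpha$ to rule out $\|x_{\alpha_n}\| \to \infty$ by contradiction. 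Everything else (continuity of $f(x_\alpha,\cdot)$, weak-strong lsc, extraction via reflexivity, uniqueness) plugs in mechanically once this compactness issue is settled.
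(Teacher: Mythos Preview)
Your outline matches the paper's proof in structure: upper semicontinuity via $v(\alpha_n) \leq f(x_\alpha, \alpha_n) \to v(\alpha)$, then lower semicontinuity by extracting a weakly convergent subsequence of minimisers and invoking weak-strong lsc. The only point you leave open is boundedness of $(x_{\alpha_n})$, and here the paper does \emph{not} fall back on the coercivity of the specific dual functional; it proves boundedness abstractly, precisely via your second hinted route. Concretely: assuming (for a contradiction) that $\|x_n\| \to \infty$ along a subsequence, set $y_n := x_n/\|x_n\| \rightharpoonup y$ and, for any fixed $\lambda > 0$, observe that the convex combination
\[
\Big(1 - \frac{\lambda}{\|x_n\|}\Big)\, x_\alpha \;+\; \frac{\lambda}{\|x_n\|}\, x_n \;\rightharpoonup\; x_\alpha + \lambda y.
\]
Applying successively weak-strong lsc, convexity of $f(\cdot, \alpha_n)$, the minimality $f(x_n,\alpha_n) \leq f(x_\alpha, \alpha_n)$, and continuity of $f(x_\alpha, \cdot)$, one obtains $f(x_\alpha + \lambda y, \alpha) \leq f(x_\alpha, \alpha)$, so $x_\alpha + \lambda y$ is also a minimiser for every $\lambda > 0$, contradicting uniqueness. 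This is exactly the ``convex combination plus uniqueness'' contradiction you alluded to; the detail you were missing is the specific choice $t_n = \lambda/\|x_n\|$ and the chain of inequalities above.
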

\begin{proof}
Let $\alpha_n \rightarrow \alpha$. Denoting 
$m(\alpha) = \inf_{x \in H} f(x,\alpha) = f(x_\alpha, \alpha)$, let us show that $m(\alpha_n)$ converges to $m(\alpha)$. 

\textit{Upper semicontinuity.}
For $x \in H$ fixed, thanks to the continuity of $f(x,\cdot)$, we pass to the limit in $f(x,\alpha_n) \geq m(\alpha_n)$ and find
\[m(\alpha) = \inf_{x \in H} f(x,\alpha) \geq \limsup_{n\rightarrow +\infty} m(\alpha_n).\]

\textit{Lower semicontinuity.}
We denote $x_n = x_{\alpha_n}$. Let  us for the moment admit that $(x_n)$ is bounded. Upon extraction, we may assume that $x_n \rightharpoonup \bar x$ for some $x \in H$. By sequential weak-strong lower semicontinuity,
\[f(\bar x, \alpha) \leq \liminf_{n \rightarrow +\infty} f(x_n, \alpha_n) = \liminf_{n \rightarrow +\infty} m(\alpha_n).\]
Since the left-hand side is bounded from below by $m(\alpha)$, we have proved lower semicontinuity (and in fact $\bar x = x_\alpha$).

We are left to proving the boundedness of $(x_n)$ to conclude the proof. Assume that $(x_n)$ is not bounded. Upon extraction, we may assume that
\[y_n := \frac{x_n}{\|x_n\|_H}\rightharpoonup y,\]
for some $y \in H$. For any fixed $\lambda>0$, we shall prove that $x_\alpha + \lambda y$ minimises $f(\cdot, \alpha)$, which contradicts the fourth assumption that there exists a single minimum point.

Indeed, we notice that
\[\Big(1-\frac{\lambda}{\|x_n\|_H}\Big) x_\alpha+  \frac{\lambda}{\|x_n\|_H} x_n\rightharpoonup x_\alpha+ \lambda y.\]
Hence, by weak-strong lower semicontinuity, convexity, the fact $x_n$ minimises $f(\cdot, \alpha_n)$ and continuity,
\begin{align*}f(x_\alpha+\lambda y, \alpha)& \leq \liminf_{n\rightarrow +\infty} f\Big(\Big(1-\frac{\lambda}{\|x_n\|_H}\Big) x_\alpha+  +\frac{\lambda}{\|x_n\|_H} x_n,\alpha_n\Big) \\
& \leq \liminf_{n\rightarrow +\infty}\Big(1-\frac{\lambda}{\|x_n\|_H}\Big)f(x_\alpha, \alpha_n) + \frac{\lambda}{\|x_n\|_H} f(x_n,\alpha_n)
\\& \leq \liminf_{n\rightarrow +\infty}\Big(1-\frac{\lambda}{\|x_n\|_H}\Big)f(x_\alpha, \alpha_n) + \frac{\lambda}{\|x_n\|_H} f(x_\alpha,\alpha_n) \\
& = \liminf_{n\rightarrow +\infty} f(x_\alpha,\alpha_n) = f(x_\alpha,\alpha).
\end{align*} 
\end{proof}

\section{The classical bathtub principle}
\label{app-sec-bathtub}

The classical bathtub principle characterises the maximisers, and gives the maximum value, of the constrained scalar product maximisation:
\begin{equation}
\label{bathtub-classical}\sup_{u \in \widetilde{\U}^\ast_L} \int_\Omega u(x) v(x) \,dx,\end{equation}
where $v\in L^2(\Omega)$ is arbitrary, and
\begin{equation*}
\widetilde{\U}_L^\ast:= \left\{ u \in L^2(\Omega), \; 0 \leq u \leq 1  \text{ and } \int_\Omega  u= \mL\right\},\end{equation*}
is the convex hull (and $L^\infty$ weak-$\ast$ closure) of the set of characteristic functions whose support has the corresponding fixed measure:
\[\widetilde{\U}_L:=\{\chi_\omega, \quad \omega\subset \Omega, \quad |\omega|= \mL\}.\]

Recalling the notations \eqref{Phi} and \eqref{Phi-1} introduced in Section \ref{subsec-bathtub-convex-analysis}, the classical bathtub principle reads (we refer to \cite{Lieb2001}):

\begin{lmm}[classical bathtub principle]
\label{classical_bathtub}
Let $v \in L^2(\Omega)$. 
Denote $\rho(v):=\Phi^{-1}_v(\mL)$. The maximum in~\eqref{bathtub-classical} equals
\begin{equation*} 
\left(\int_{v>\rho(v)} v \right)+\rho(v)(\mL-|\{v>\rho(v)\}|) =\int_0^{\mL} \Phi_v^{-1},  \end{equation*}
 and the maximisers are given by
\begin{equation*}
u^\star := \chi_{\{v>\rho(v)\}}+c\chi_{\{v=\rho(v)\}},\end{equation*}
where  $c$ is any measurable function such that $0 \leq c \leq 1$ and 
\[
\int_{\{v=\rho(v)\}} c = \mL - |\{v>\rho(v)\}|.\]
In particular, if all the level sets of the function $v$ have zero measure, then the maximum is uniquely attained by
\begin{equation*} u^\star := \chi_{\{v>\rho(v)\}}, \end{equation*}
and the maximum hence equals $\displaystyle \int_{\{v>\rho(v)\}} v$.
\end{lmm}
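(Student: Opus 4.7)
The strategy is the classical one: exhibit a candidate $u^\star$ fitting the description, verify it is admissible, and then establish optimality via a rearrangement-type inequality. Uniqueness (under the zero-measure level-set condition) and the formula in terms of $\Phi_v^{-1}$ follow from the same manipulations. A subtle point to keep in mind is that $\rho(v) = \Phi_v^{-1}(\mL)$ need not be nonnegative (in contrast to $h(v)$ of Section~\ref{subsec-bathtub-convex-analysis}), but the bathtub argument only uses that $\rho(v)$ is the ``threshold value'' characterising the superlevel set of mass $\mL$.

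\textbf{Admissibility.} From the definition $\rho(v) = \Phi_v^{-1}(\mL) = \inf\{r \in \R : \Phi_v(r) \leq \mL\}$, one checks by right-continuity of $\Phi_v$ that $|\{v > \rho(v)\}| \leq \mL \leq |\{v \geq \rho(v)\}|$. Hence $\mL - |\{v>\rho(v)\}| \in [0, |\{v=\rho(v)\}|]$, so one can choose a measurable $c$ with $0 \leq c \leq 1$ and $\int_{\{v=\rho(v)\}} c = \mL - |\{v>\rho(v)\}|$. Then $u^\star = \chi_{\{v>\rho(v)\}} + c \chi_{\{v=\rho(v)\}}$ satisfies $0 \leq u^\star \leq 1$ and $\int u^\star = \mL$, so $u^\star \in \widetilde{\U}_L^\ast$.

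\textbf{Optimality.} The key observation is the identity, valid for any admissible $u, u^\star \in \widetilde{\U}_L^\ast$,
\begin{equation*}
\int_\Omega (u - u^\star) v \,dx = \int_\Omega (u - u^\star)\bigl(v - \rho(v)\bigr)\,dx,
\end{equation*}
since $\rho(v)\int(u-u^\star) = 0$ (both $u$ and $u^\star$ have mass $\mL$). Splitting $\Omega$ into $\{v > \rho(v)\}$, $\{v = \rho(v)\}$ and $\{v < \rho(v)\}$: on the first, $u^\star = 1 \geq u$ and $v - \rho(v) > 0$; on the third, $u^\star = 0 \leq u$ and $v - \rho(v) < 0$; on the middle, the integrand vanishes. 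The integrand of the right-hand side is therefore nonpositive everywhere, so $\int uv \leq \int u^\star v$, proving optimality of $u^\star$.

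\textbf{Formula and uniqueness.} The value $\int u^\star v = \int_{\{v>\rho(v)\}} v + \rho(v)(\mL - |\{v>\rho(v)\}|)$ is immediate by construction. To rewrite it as $\int_0^{\mL} \Phi_v^{-1}$, one applies the layer-cake formula
$\int_{\{v>\rho(v)\}} (v - \rho(v)) = \int_{\rho(v)}^\infty \Phi_v(r)\,dr$
and performs the change of variables $s = \Phi_v(r)$, using that $\Phi_v^{-1}$ is the (generalised) inverse of $\Phi_v$, to obtain $\int_0^{|\{v>\rho(v)\}|} (\Phi_v^{-1}(s) - \rho(v))\,ds$; adding $\rho(v) \mL$ produces $\int_0^{\mL} \Phi_v^{-1}(s)\,ds$ after accounting for the plateau on $[|\{v>\rho(v)\}|, \mL]$ where $\Phi_v^{-1} \equiv \rho(v)$. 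Finally, for uniqueness under the assumption that every level set of $v$ has measure zero, the middle piece $\{v=\rho(v)\}$ is negligible and equality $\int(u - u^\star)(v - \rho(v)) = 0$ forces $u = u^\star = \chi_{\{v > \rho(v)\}}$ a.e., whereas in the general case any admissible $u$ that agrees with $u^\star$ on $\{v \neq \rho(v)\}$ and has the correct total mass on $\{v = \rho(v)\}$ is a maximiser, which yields the stated description. The only mild obstacle is keeping track of the boundary case where $\Phi_v$ has a jump at $\rho(v)$, but this is precisely what the freedom to choose $c$ handles.
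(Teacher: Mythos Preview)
Your proof is correct and follows the standard rearrangement argument (subtract the threshold $\rho(v)$, use the equal-mass constraint to reduce to a sign analysis on the three regions). Note, however, that the paper does not actually supply its own proof of this lemma: it is stated with a reference to Lieb--Loss~\cite{Lieb2001}, so there is nothing to compare against beyond observing that your argument is essentially the classical one found there.
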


Now, recall that we defined $\U_L$ by \eqref{1-shapess-L} and its convex hull $\overline{\U}_L$ by \eqref{convex-constraint-set} in Section \ref{subsec-bathtub-convex-analysis}. They are respective relaxations of
$\tilde{\U}_L$, and its convex hull
$\tilde{\U}^\ast_L$.

For $v \in L^2(\Omega)$, we consider the relaxed version of \eqref{bathtub-classical}:
\begin{equation}
\label{bathtub-relaxed-app}\sup_{u \in \overline{\U}_L} \int_\Omega u(x) v(x) \,dx.\end{equation}

Then, the complete solution of Lemma \ref{relaxed_bathtub} is given by the following:
    \begin{lmm}[relaxed bathtub principle]
    \label{lmm-app-bathtub-relaxed}
Let $v\in L^2(\Omega)$. Denote $h(v)=  \max(0, \Phi_v^{-1}(\mL)) = \max(0,\rho(v))$. Then, the maximum in~\eqref{bathtub-relaxed-app} equals
 \[\int_0^{\min(\Phi_v(0), \mL)} \Phi_v^{-1},\]
and the maximisers are given by
\begin{equation*}u^\star := \chi_{\{v>h(v)\}}+c\chi_{\{v=h(v)\}},\end{equation*}
where  $c$ is any measurable function such that $0 \leq c \leq 1$ and 
\[\left\{\begin{aligned}
\int_{\{v=h(v)\}} c = \mL - |\{v>h(v)\}| \ &\textrm{ if } \ h(v) >0 \\
\int_{\{v=h(v)\}} c \leq \mL - |\{v>h(v)\}| \ &\textrm{ if } \ h(v) =0
\end{aligned}\right.\]
\end{lmm}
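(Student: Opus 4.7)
The plan is to prove a sharp upper bound on $\int_\Omega u v$ for all $u \in \overline{\U}_L$ via the simple identity
\[\int_\Omega u v = \int_\Omega u\bigl(v - h(v)\bigr) + h(v) \int_\Omega u,\]
which separates the contribution of the ``height'' $h(v)$ from the ``excess'' $v - h(v)$. Since $h(v) \geq 0$ by construction, the pointwise bound $0 \leq u \leq 1$ controls the first term, while the global constraint $\int_\Omega u \leq L|\Omega|$ combined with $h(v) \geq 0$ controls the second, yielding
\[\int_\Omega u v \;\leq\; \int_{\{v > h(v)\}} \bigl(v - h(v)\bigr) + h(v)\, L|\Omega|.\]

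First I would establish this upper bound, carefully tracking the pointwise conditions under which each inequality becomes an equality: namely $u = 1$ almost everywhere on $\{v > h(v)\}$, $u = 0$ almost everywhere on $\{v < h(v)\}$, and --- provided $h(v) > 0$ --- the mass saturation $\int_\Omega u = L|\Omega|$. Second, I would check that the candidate $u^\star = \chi_{\{v > h(v)\}} + c\chi_{\{v = h(v)\}}$ with $c$ as prescribed lies in $\overline{\U}_L$. This uses the definitions of $\Phi_v$ and $\rho(v)$ to verify $|\{v > h(v)\}| \leq L|\Omega|$ in both cases $h(v) = \rho(v) \geq 0$ (where it follows from $\Phi_v(\rho(v)) \leq L|\Omega|$) and $h(v) = 0$ with $\rho(v) < 0$ (where $|\{v > 0\}| < L|\Omega|$). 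By construction $u^\star$ saturates each of the equality conditions above and therefore attains the upper bound.

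Third, I would identify the upper bound with the claimed value $\int_0^{\min(\Phi_v(0), L|\Omega|)} \Phi_v^{-1}$ via a layer-cake (Cavalieri) computation, best split into two subcases. When $h(v) = \rho(v) \geq 0$, we have $\min(\Phi_v(0), L|\Omega|) = L|\Omega|$, and the duality between the distribution function and its pseudo-inverse gives
\[\int_0^{L|\Omega|} \Phi_v^{-1}(s)\, ds \;=\; \rho(v)\, L|\Omega| + \int_{\{v > \rho(v)\}} \bigl(v - \rho(v)\bigr),\]
matching our bound exactly. When $h(v) = 0$ and $\rho(v) < 0$, we instead have $\min(\Phi_v(0), L|\Omega|) = \Phi_v(0)$, and the same duality yields $\int_0^{\Phi_v(0)} \Phi_v^{-1} = \int_{\{v > 0\}} v$, again matching our bound since $h(v)\, L|\Omega| = 0$.

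The main obstacle is the bookkeeping for the characterisation of \emph{all} maximisers in the boundary case $h(v) = 0$, where the mass condition on $c$ relaxes from an equality to an inequality. This is precisely what distinguishes the present result from the classical bathtub principle (Lemma \ref{classical_bathtub}): the second-term inequality $h(v) \int_\Omega u \leq h(v)\, L|\Omega|$ degenerates to the trivial $0 \leq 0$ when $h(v) = 0$, imposing no saturation on the total mass. All remaining equality conditions reduce to the pointwise prescription $u = 1$ on $\{v > 0\}$ and $u = 0$ on $\{v < 0\}$, with $c := u|_{\{v = 0\}}$ only constrained by $0 \leq c \leq 1$ and the feasibility $\int_{\{v=0\}} c \leq L|\Omega| - |\{v > 0\}|$, completing the proof.
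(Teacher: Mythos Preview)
Your proposal is correct and takes a genuinely different route from the paper. The paper argues by a case split on whether $|\{v>0\}| \geq L|\Omega|$ or not: in the first case it shows, by constructing explicit competitors, that any maximiser must saturate the mass constraint $\int_\Omega u = L|\Omega|$, thereby \emph{reducing to} the classical bathtub principle (Lemma~\ref{classical_bathtub}) as a black box; in the second case it argues directly that $u^\star=1$ on $\{v>0\}$ and that the values on $\{v=0\}$ are irrelevant to the objective. Your approach instead proves the sharp upper bound in one stroke via the shift decomposition $\int_\Omega uv = \int_\Omega u(v-h(v)) + h(v)\int_\Omega u$ and then reads off the full characterisation of maximisers from the equality conditions; the classical result is never invoked. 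This is arguably cleaner and more self-contained, and it makes transparent exactly why the equality $\int c = L|\Omega|-|\{v>h(v)\}|$ relaxes to an inequality precisely when $h(v)=0$: that is the case in which the bound $h(v)\int_\Omega u \leq h(v)\,L|\Omega|$ carries no information. The paper's approach, on the other hand, has the virtue of explaining structurally that the relaxed problem coincides with the equality-constrained one whenever the positive part of $v$ is ``large enough''. One minor point: your claim that $\rho(v)\geq 0$ forces $\min(\Phi_v(0),L|\Omega|)=L|\Omega|$ fails at the borderline $\rho(v)=0$ (where $\Phi_v(0)\leq L|\Omega|$), but since $\Phi_v^{-1}\equiv 0$ on $[\Phi_v(0),L|\Omega|]$ in that case the two integrals agree and the argument goes through unchanged.
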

\begin{rmrk}\label{app-rmrk-bathtub-uniqueness}
In particular, if $h(v)>0$, there is a unique maximiser in~\eqref{bathtub-relaxed-app} if and only if
\[|\{v>h(v)\}|+|\{v=h(v)\}|=\mL.\]
Indeed, when the above does not hold, $c$ can be chosen to have values in $(0,1)$ on a set of nonzero measure, and the maximisers are no longer unique. 

On the other hand, if $h(v)=0$, as soon as $|\{v=h(v)\}|>0$ the maximisers are no longer unique.
\end{rmrk}

\begin{proof}
  Let us first note that the supremum exists and is attained, as \eqref{bathtub-relaxed-app} consists in maximising a continuous function on a weak-$\ast$ compact set. Also note that any maximiser $u^\star$ obviously satisfies $\operatorname{supp} u^\star \subset \{v\geq 0\}$.

    To prove the relaxed bathtub principle, we distinguish two cases:
    \begin{enumerate}

        \item  We first consider the case where $|\{v> 0\}|\geq L|\Omega|$.
        
        Let 
        $u^\star$ be a maximiser. 
        
        Suppose 
        \begin{equation}\label{unsaturated-integral}\int_\Omega u^\star < L|\Omega|.\end{equation}
        
        If $|\{u^\star<1\} \cap \{v>0\}|>0$, then there exists a set $\omega\subset \{u^\star<1\}\cap \{v>0\}$ of nonzero measure with $|\omega|\leq L|\Omega|$, so that
        \[0<\int_\omega 1-u^\star \leq L|\Omega|-\int_\Omega u^\star.\]
        Then, $\int_\Omega u^\star+\chi_\omega(1-u^\star) \leq L|\Omega|$ and
        \[\int_\Omega (u^\star+\chi_\omega (1-u^\star)) v > \int_\Omega u^\star v,\]
      which contradicts the fact that $u^\star$ is a maximiser. Thus $u^\star=1$ a.e. on $\operatorname{supp}u^\star \cap \{v>0\}$. 

        Then, the assumption \eqref{unsaturated-integral} implies $|\operatorname{supp}u^\star \cap \{v>0\}| <L|\Omega|$, so that there exists a measurable set $\omega$ satisfying $|\omega|\leq L|\Omega|$ and \[\operatorname{supp} u^\star \cap\{v>0\}\subsetneq \omega \subset \{v>0\}.\]
        We then have
        \[\int_\Omega u^\star v = \int_{\operatorname{supp} u^\star \cap \{v>0\}} v < \int_\omega v =\int_\Omega \chi_\omega v\]
        which contradicts the fact that $u^\star$ is a maximiser.
        
        By contradiction we have thus proved that any maximiser $u^\star$ satisfies $\textstyle \int_\Omega u^\star=L|\Omega|$, so that the relaxed problem reduces to the classical bathtub problem. 

        The assumption on $v$ implies that $\rho(v)\geq0$. Hence, if $h(v)=0$, then $\rho(v)=0$ \ie $|\{v>0\}|=L|\Omega|$ and $|\{v=0\}|=0$. Thus applying the classical bathtub principle yields the unique maximiser $u^\star=\chi_{\{v>0\}}$.
        On the other hand, if $h(v)>0$, $\rho(v)=h(v)$ and  a straightforward application of the classical bathtub principle yields the maximisers $\chi_{\{v>h(v)\}}+c\chi_{\{v=h(v)\}}$,
        where $c$ is a measurable function such that $0\leq c \leq 1$ and
        \[\int_{\{v=h(v)\}} c=L|\Omega|-|\{v>h(v)\}|.\]

          \item We now turn to the case where $|\{v> 0\}|< L|\Omega|$. This implies that $h(v)=0$.


          Let $u^\star$ be a maximiser. From the assumption on $v$ and the constraints on $u^\star$, $\textstyle \int_{\{v>0\}} u^\star <L|\Omega|$. From the same argument as above, $u^\star=1$ on $\{v>0\}$.

          Finally, the values of $u^\star$ on $\{v=0\}$ do not affect the quantity in \eqref{bathtub-relaxed-app}. The only requirement on $u^\star_{|\{v=0\}}$ is that it be measurable, and
          \[\int_{\{v=0\}} u^\star \leq L|\Omega|-\int_{\{v>0\}} u^\star=L|\Omega| - \{v>0\},\]
          in order to satisfy the integral constraint on $u^\star.$
         We can thus write $u^\star$ as $\chi_{\{v>0\}}+c \chi_{\{v=0\}}$
         with $0\leq c \leq 1$ and 
         \[\int_{\{v=0\}} c \leq L|\Omega|-|\{v>0\}|,\]
         which concludes the proof.
    \end{enumerate}
\end{proof}

\end{document}